\documentclass[10pt]{article}
\usepackage[
  margin=3cm,
  includefoot,
  footskip=30pt,
]{geometry}

\usepackage[T1]{fontenc}
\usepackage{textcomp}
\usepackage{libertinus}

\usepackage{url}
\usepackage{fullpage}
\usepackage{diagbox}

\usepackage{natbib}

\usepackage{amsthm,amsfonts,amsmath,amssymb,mathtools,epsfig,color,float,graphicx,verbatim,bm,bbm}
\usepackage{aliascnt}
\usepackage{enumerate}
\usepackage{enumitem}
\usepackage{wrapfig}
\usepackage{subcaption}
\usepackage[export]{adjustbox}
\usepackage{nicefrac}
\usepackage{hhline}
\usepackage{multicol}
\usepackage{multirow}
\usepackage[dvipsnames]{xcolor}

\usepackage[algo2e,ruled,vlined,linesnumbered]{algorithm2e}

\definecolor{linkblue}{HTML}{3366CC}

\usepackage{hyperref}
\hypersetup{
  colorlinks = true,
  urlcolor   = linkblue,
  linkcolor  = linkblue,
  citecolor  = linkblue,
  pdftitle   = {Optimal Algorithms for
Nonsmooth Nonconvex Stochastic Optimization with Constraints},
  pdfauthor  = {Gon Buzaglo and Guy Kornowski}
}
\usepackage[nameinlink,capitalise,noabbrev]{cleveref}

\allowdisplaybreaks
\setlist{leftmargin=*}

\newtheorem{theorem}{Theorem}[section]

\newaliascnt{proposition}{theorem}
\newtheorem{proposition}[proposition]{Proposition}
\aliascntresetthe{proposition}

\newaliascnt{lemma}{theorem}
\newtheorem{lemma}[lemma]{Lemma}
\aliascntresetthe{lemma}

\newaliascnt{corollary}{theorem}
\newtheorem{corollary}[corollary]{Corollary}
\aliascntresetthe{corollary}

\newaliascnt{assumption}{theorem}

\aliascntresetthe{assumption}

\newaliascnt{definition}{theorem}
\newtheorem{definition}[definition]{Definition}
\aliascntresetthe{definition}

\newaliascnt{remark}{theorem}
\newtheorem{remark}[remark]{Remark}
\aliascntresetthe{remark}

\newaliascnt{fact}{theorem}
\newtheorem{fact}[fact]{Fact}
\aliascntresetthe{fact}

\newaliascnt{example}{theorem}

\aliascntresetthe{example}

\newaliascnt{conjecture}{theorem}

\aliascntresetthe{conjecture}

\newtheorem*{question*}{Question}

\newtheorem*{bigquestion*}{Big Question}
\newtheorem*{definition*}{Definition}
\newtheorem*{keyquestion*}{Key Question}

\crefname{theorem}{theorem}{theorems}
\Crefname{theorem}{Theorem}{Theorems}
\crefname{lemma}{lemma}{lemmas}
\Crefname{lemma}{Lemma}{Lemmas}
\crefname{proposition}{proposition}{propositions}
\Crefname{proposition}{Proposition}{Propositions}
\crefname{corollary}{corollary}{corollaries}
\Crefname{corollary}{Corollary}{Corollaries}
\crefname{assumption}{assumption}{assumptions}
\Crefname{assumption}{Assumption}{Assumptions}
\crefname{definition}{definition}{definitions}
\Crefname{definition}{Definition}{Definitions}
\crefname{remark}{remark}{remarks}
\Crefname{remark}{Remark}{Remarks}
\crefname{fact}{fact}{facts}
\Crefname{fact}{Fact}{Facts}
\crefname{example}{example}{examples}
\Crefname{example}{Example}{Examples}

\renewcommand{\eqref}[1]{Eq.~(\ref{#1})}

\newcommand{\E}{\mathbb{E}}

\newcommand{\argmin}[1]{\underset{#1}{\mathrm{argmin}}}

\newcommand{\dist}{\mathrm{dist}}

\newcommand{\ba}{\mathbf{a}}

\newcommand{\bx}{\mathbf{x}}

\newcommand{\bw}{\mathbf{w}}

\newcommand{\bg}{\mathbf{g}}

\newcommand{\bu}{\mathbf{u}}

\newcommand{\bv}{\mathbf{v}}

\newcommand{\bz}{\mathbf{z}}

\newcommand{\bh}{\mathbf{h}}

\newcommand{\by}{\mathbf{y}}

\newcommand{\bn}{\mathbf{n}}
\newcommand{\bs}{\mathbf{s}}

\newcommand{\norm}[1]{\|#1\|}

\newcommand{\inner}[1]{\langle#1\rangle}

\newcommand{\conv}{\mathrm{conv}}

\newcommand{\Unif}{\mathrm{Unif}}

\newcommand{\abs}[1]{\left| #1 \right|}
\newcommand{\N}{\mathbb{N}}

\newcommand{\R}{\mathbb{R}}

\newcommand{\eps}{\epsilon}

\providecommand{\Prb}{\mathbb{P}}
\providecommand{\cX}{\mathcal{X}}

\providecommand{\cA}{\mathcal{A}}

\providecommand{\Ball}{\mathbb{B}}
\providecommand{\Sph}{\mathbb{S}}
\providecommand{\prog}{\mathrm{prog}}
\providecommand{\bp}{\mathbf{p}}

\providecommand{\bnabla}{\boldsymbol{\nabla}}
\providecommand{\bell}{\boldsymbol{\ell}}

\providecommand{\proj}{\Pi}

\providecommand{\Reg}{\operatorname{Regret}}
\providecommand{\LMO}{\operatorname{LMO}}
\providecommand{\FW}{\operatorname{FW}}

\providecommand{\wtO}{\widetilde O}
\providecommand{\Res}{\mathcal{R}}

\providecommand{\Gmap}{\mathcal{G}}
\providecommand{\set}[1]{\left\{#1\right\}}

\renewcommand{\argmin}{\operatorname*{arg\,min}}

\renewcommand{\conv}[1][]{%
  \operatorname{conv}%
  \if\relax\detokenize{#1}\relax
  \else
    \left(#1\right)%
  \fi
}
\renewcommand{\inner}[2]{\left\langle #1,#2\right\rangle}
\renewcommand{\eps}{\varepsilon}

\title{Optimal Algorithms for
Nonsmooth Nonconvex\\Stochastic Optimization with Constraints}
\author{Gon Buzaglo
\vspace{3pt}\\Princeton University
\and Guy Kornowski
\vspace{3pt}\\Institute for Advanced Study
}
\date{}

\begin{document}

\maketitle

\begin{abstract}
We present new algorithms for stochastic optimization problems in which the objective is neither smooth nor convex, subject to a convex constraint set.
Our algorithms return a $(\delta,\eps)$-Goldstein stationary point, as recently defined for constrained problems by \citet{liu2024},
with improved complexities, and are the first to achieve optimal convergence rates for this task in several settings.
In particular, we design projection-based and projection-free algorithms that achieve a $\widetilde O(\delta^{-1}\eps^{-3})$ rate using stochastic gradients, which we show to be optimal, and a $\widetilde O(d\delta^{-1}\eps^{-3})$ rate using zeroth-order queries, which is also optimal in its dependence on the dimension $d$ and improves over prior work.
Notably, our lower bound is given in terms of the Frank-Wolfe gap and holds even for smooth problems, which is of independent interest.
These results show that in nonconvex stochastic optimization, the convergence rate is degraded neither by the presence of constraints nor by nonsmoothness.
Our analysis is based on a reduction to online linear optimization, extending a related technique from unconstrained optimization.
\end{abstract}

\section{Introduction}
\label{sec:introduction}
We consider stochastic optimization problems of the form
\begin{equation} \label{eq: problem}
\min_{\bx\in\cX}\,f(\bx):=\E_{\xi\sim\Xi}[F(\bx;\xi)]\,,
\end{equation}
where the objective $f:\R^d\to\R$ is neither smooth nor convex, subject to a closed convex constraint set $\cX\subseteq\R^d$.
Such problems arise naturally in machine learning applications, including in adversarial attacks and robust training
\citep{szegedy2013intriguing,madry2017towards,
wang2022convergencerobustnessadversarialtraining,qi2024visual}, norm-constrained training of deep neural networks
\citep{goodfellow2013maxout,srivastava2014dropout}, and
sharpness-aware training
\citep{foret2021sharpness,chen2023sharpness,mordido2024lookbehind}.
Nonetheless, despite its prevalence, constrained nonsmooth nonconvex optimization is rather challenging to analyze, with most existing work guaranteeing either only asymptotic convergence without explicit finite-time rates, or only applying to deterministic objectives, or both \citep{curtis2012sequential,hare2016proximal,ruszczynski2020convergence,
davis2020stochastic,ermol1998stochastic,hoseini2021filter,grimmerjia2025goldstein}.

Recently, \citet{liu2024} provided the first non-asymptotic convergence rates for algorithms solving problem (\ref{eq: problem}).
To do so, the authors defined two novel stationarity measures suitable for constrained nonsmooth optimization, which generalize the classic gradient mapping norm and the Frank-Wolfe gap, both of which are common measures for \emph{smooth} constrained optimization,
by replacing classical gradients with the Goldstein subdifferential.
The Goldstein subdifferential, 
which is the convex hull of nearby Clarke subdifferentials \citep{goldstein1977,clarke1990}, 
has played a key role in non-asymptotic analyses of (unconstrained) nonsmooth nonconvex optimization problems in recent years following \citet{zhang2020}, since the norm of individual gradients cannot be minimized efficiently in general \citep{kornowski2022oracle}.

While \citet{liu2024} established convergence rates 
for several zeroth-order methods, it is notable that these rates are strictly worse than those achievable for unconstrained optimization when $\cX=\R^d$. In particular, the complexities of the algorithms therein all scale as $O(d^{3/2})$ where $d$ is the problem's dimension, whereas in the analogous unconstrained problem, the optimal complexity is known to be $O(d)$ \citep{kornowski2024}. This raises the natural question, asked by \citet{liu2024}, of whether the complexity achieved in their work can be further improved. In particular, it remained unclear whether incorporating constraints
necessarily makes nonsmooth nonconvex optimization harder to solve.

\subsection{Our Contributions}
In our work, we design and analyze both first-order and zeroth-order algorithms for constrained nonsmooth nonconvex stochastic optimization, extending as well as strictly improving the complexities obtained by \citet{liu2024}.
We establish optimal convergence rates for our algorithms with matching lower bounds, which in particular prove that \emph{constrained nonsmooth optimization is as easy as unconstrained smooth optimization} in terms of query complexity for nonconvex stochastic problems, namely neither constraints nor nonsmoothness degrade the convergence rates.
Our contributions can be summarized as follows:

\begin{itemize}
    \item \textbf{Reduction from Constrained Optimization to Online Learning (\Cref{thm:generic-conversion}):}
    As our main proof technique, we extend the online-to-nonconvex framework of \citet{cutkosky2023} to optimization over a closed convex constraint set.
    Our reduction converts any online learning algorithm that is ``local'' in the sense that its predictions remain close to one another, into a constrained optimization algorithm, and translates the online learner's regret into a local constrained-stationarity guarantee.

    \item \textbf{First-Order Algorithms (\Cref{thm:projection-expectation} and \Cref{thm:lmo-main}):}
    We use our generic reduction to design stochastic first-order algorithms with optimal gradient oracle complexity for problem~(\ref{eq: problem}). In particular, by using classic online learning algorithms such as online gradient descent (OGD) \citep{zinkevich2003online}
    or follow the regularized leader (FTRL) \citep{shalev2007primal} and applying our reduction, we obtain projection-based and projection-free algorithms which, in both cases, require $\widetilde O(\delta^{-1}\eps^{-3})$ stochastic gradients to converge to a constrained approximate-stationary point (formally defined in \Cref{sec:preliminaries}). These are the first gradient-based algorithms for this problem class that are equipped with dimension-free complexity, which as we will further show, is optimal.

    \item \textbf{Zeroth-Order Algorithms (\Cref{thm:two-point-projection,thm:two-point-lmo}):}
    We further discuss zeroth-order algorithms that only query noisy function values, and obtain projection-based and projection-free algorithms which, in both cases, require $\widetilde O(d\delta^{-1}\eps^{-3})$ function-value queries to converge to a constrained approximate-stationary point. This improves over the complexities obtained by \citet{liu2024} by a
$\Omega(\sqrt{d})$ factor in both projection-based and projection-free
settings, and has optimal dependence on both the dimension and the accuracy
parameters, with the recent joint lower bound of
\citet{zhang2026jointlowerboundszerothorder} matching our rate for the
Goldstein residual.
    
\item \textbf{Matching Lower Bounds (\Cref{sec:lower-bounds}):}
Finally, we adapt the lower bound technique of \citet{arjevani2023lower} to the discussed setting. In particular, we prove an $\Omega(\eps^{-4})$ lower bound on the first-order complexity of minimizing the Frank-Wolfe gap in stochastic nonconvex optimization. Our lower bound holds even for smooth objectives and
matches the rates of existing smooth algorithms \citep{reddi2016stochastic}. As we show, this result further yields matching lower
bounds for both the Goldstein Frank-Wolfe gap and the Goldstein residual (to be defined in \Cref{sec:preliminaries}),
establishing the optimality of our convergence rates. 
\end{itemize}

\section{Preliminaries}
\label{sec:preliminaries}
\paragraph{Notation.}
 We use bold-faced letters for vectors, e.g., $\bx\in\R^d$, and
$\norm{\,\cdot\,}$ denotes the Euclidean norm.
We denote $[n]:=\set{1,\ldots,n}$, $\Ball(\bx,r):=\set{\by:\norm{\by-\bx}\le r}$,
and $\dist(\bx,A):=\inf_{\ba\in A}\|\bx-\ba\|$,
and denote by $\conv[\cdot]$ the convex hull operator and by $\operatorname{\Unif}(A)$ the uniform measure over a set $A$.
The projection of a point $\by\in\R^d$ onto a closed convex set $\cX$ is denoted by 
$\proj_\cX(\by):=\argmin_{\bx\in\cX}\norm{\bx-\by}$.
We say that a function $f:\R^d\to\R$ is $L$-Lipschitz if $\abs{f(\bx)-f(\by)}\leq L\norm{\bx-\by}$ for all $\bx,\by\in\R^d$, and $H$-smooth if it is differentiable and $\norm{\nabla f(\bx)-\nabla f(\by)}\leq H\norm{\bx-\by}$.
We follow standard big-$O$ notation with $O\left(\cdot\right),\,\Omega\left(\cdot\right),\,\Theta\left(\cdot\right)$ hiding constants that do not depend on problem parameters and $\tilde O\left(\cdot\right),\,\tilde \Omega\left(\cdot\right),\,\tilde \Theta\left(\cdot\right)$ suppressing logarithmic factors.

\paragraph{Nonsmooth analysis.} 
By Rademacher's theorem, Lipschitz functions are differentiable almost everywhere. Therefore, for a Lipschitz function $f:\R^d\to\R$ and point $\bx\in\R^d$ the Clarke subdifferential \citep{clarke1990} is well-defined as the convex hull of all limit points of $\nabla f(\bx_n)$ over all sequences of differentiable points which converge to $\bx$, formally defined as 
\(
\partial f(\bx):=\conv\{\bg\,:\,\bg=\lim_{n\to\infty}\nabla f(\bx_n),\,\bx_n\to \bx\}\,.
\)
The Clarke subdifferential becomes the subdifferential for a convex $f$, or the gradient if $f$ is continuously differentiable.

\paragraph{Constrained nonsmooth optimization.}

Throughout the paper, we let the constraint set $\cX\subseteq\R^d$ be a nonempty closed convex set.
For $\bx\in\cX$, $\bg\in\R^d$, and $\eta>0$, we define the mapping  $\Gmap_{\eta,\cX}(\bx,\bg):=\frac1\eta\bigl(\bx-\proj_{\cX}(\bx-\eta \bg)\bigr)$,
which coincides with the gradient mapping when $\bg=\nabla f\left(\bx\right)$, whose norm is a standard stationarity criterion for constrained smooth optimization \citep{ghadimi2016accelerated,ghadimi2016minibatch}.

In this paper, following \citet{liu2024}, we let $\delta\geq0$ and consider the generalized
gradient mapping with $\bg\in\partial_\delta f(\bx)$, where
$\partial_\delta f(\bx):=\conv\left(\cup_{\by\in\Ball(\bx,\delta)}\partial f(\by)\right)$
is the Goldstein $\delta$-subdifferential \citep{goldstein1977,zhang2020}.
We define the
$\delta$-Goldstein residual of $f$ at $\bx\in\cX$ as
\[
  \Res^{f,\cX}_\delta(\bx)
  :=
  \min_{\bg\in\partial_\delta f(\bx)}
  \sup_{\eta>0}\norm{\Gmap_{\eta,\cX}(\bx,\bg)}
  =\min_{\bg\in\partial_\delta f(\bx)}
  \dist\bigl(-\bg,\mathcal{N}_\cX(\bx)\bigr)~,
\]
where $\mathcal{N}_\cX(\bx):=\set{\bn\in\R^d:\inner{\bn}{\by-\bx}\le0\text{ for every }\by\in\cX}$ denotes the normal cone of $\cX$ at $\bx\in\cX$, and the equality above is proved in \Cref{lem:projection-residual}. The Goldstein residual above will
serve as a stationarity measure.
Note that this definition reduces to a standard gradient norm bound in the smooth unconstrained setting, since $\mathcal{N}_{\mathbb{R}^d}(\bx)=\{\mathbf{0}\}$ and $\partial_0 f(\bx)=\{\nabla f(\bx)\}$. Moreover, the $\delta$-Goldstein residual relaxes the standard optimality condition
\(
    \mathbf{0}\in \partial f(\bx)+\mathcal{N}_\cX(\bx)
\)
for constrained Lipschitz optimization (see \citealp[Corollary~6.3.9]{borwein2006convex}).

\begin{remark} \label{rem: stronger stat}
\citet{liu2024} considered stationarity with respect to a fixed parameter $\eta>0$, as opposed to taking the supremum over $\eta$ as we do.
Thus, it is notable that our results not only provide improved complexities, but also guarantee a stronger stationarity condition.
\end{remark}

If $\cX$ is compact with some finite diameter $D$, the Frank-Wolfe gap
$\max_{\bu\in\cX}\inner{\nabla f(\bx)}{\bx-\bu}\,
$
is yet another standard optimality criterion, prevalent
in the smooth projection-free optimization literature \citep{lacoste2016convergence}.
For nonsmooth problems, \citet{liu2024} introduced the $\delta$-Frank-Wolfe gap
as
\[
\FW^{f,\cX}_\delta(\bx):=\min_{\bg\in\partial_\delta f(\bx)}\max_{\bu\in\cX}\inner{\bg}{\bx-\bu}\,.
\]
We show that $\FW^{f,\cX}_\delta(\bx)\leq D\cdot\Res^{f,\cX}_\delta(\bx)$ in \Cref{lem:prelim-stability}, namely the two notions differ by at most a diameter factor, and therefore bounding the residual over a compact set implies a bound on the gap.
\Cref{app:projection-geometry} further contains some useful results related to the discussed stationarity measures.

\paragraph{Online linear optimization.}
Online linear optimization (OLO) is a standard framework for sequential
decision-making without stochastic assumptions
\citep{cesa2006prediction,hazan2016introduction,orabona2019modern}, which proceeds as follows:
Over $M\in\mathbb{N}$ rounds, an adversary chooses loss vectors $\bell_m\in\R^d$;
at each round, the learner chooses $\by_m\in C$ from a compact convex set $C\subseteq\R^d$ before observing $\bell_m$, and incurs loss $\inner{\bell_m}{\by_m}$.
We use the following local class of OLO algorithms.

\begin{definition}\label{def:local-online-learner}
Let $\cX$ be nonempty, closed, and convex, and let $r>0$.
An $r$-local OLO algorithm $\cA$ associates with every initialization
$\bx\in\cX$ a nonempty compact convex set $C_{\cA}(\bx)\subseteq \cX\cap\Ball(\bx,r)$, depending only
on $\bx$ and with $\bx\in C_{\cA}(\bx)$,
so that all predictions of $\cA$ initialized at $\bx$ lie in $C_{\cA}(\bx)$.
\end{definition}

In \Cref{sec:projection,sec:lmo}, we obtain such local learners from
standard OLO algorithms with a small computational modification.
Their regret when initialized at $\bx$ is defined as
\[
\Reg_\cA(\bx;\bell_1,\dots,\bell_M)
:=
\max_{\by\in C_{\cA}(\bx)}
\sum_{m=1}^M
\inner{\bell_m}{\by_m-\by}.
\]

\paragraph{Problem setting.}
We consider a stochastic optimization problem associated to an objective
$f:\R^d\to\R$ of the form
\(
    f(\bx)=\E_{\xi\sim\Xi}[F(\bx;\xi)],
\)
subject to a nonempty closed convex set $\cX\subseteq\R^d$.
For simplicity, we assume that $f$ and the stochastic components
$F(\cdot;\xi)$ are differentiable, but do not assume that $f$ is
smooth.\footnote{
The differentiability assumption can be relaxed using standard randomized
smoothing (over an arbitrarily small ball), yielding a differentiable
objective; see \citep{cutkosky2023,davis2022gradient}. We choose to avoid
this slight technicality altogether in our work.}
We assume access to stochastic component gradients: Given any point
$\bx\in\cX$, the oracle samples $\xi\sim\Xi$ independently and returns
\(\nabla F(\bx;\xi)
\),
satisfying 
\[
    \E_\xi[\nabla F(\bx;\xi)]=\nabla f(\bx),
    \qquad
    \E_\xi[\norm{\nabla F(\bx;\xi)}^2]\le G^2.
\]
\section{Constrained Online-to-Nonconvex Conversion}
\label{sec:reduction}

We present a reduction from constrained nonconvex optimization to online learning in \Cref{alg:conversion}. While some earlier online-to-nonconvex results assumed smoothness \citep{agarwal2019efficient,hallak2021regret}, we rely on the nonsmooth conversion of \citet{cutkosky2023}, with subsequent works extending this approach in various directions \citep{zhang2024random,ahn2024understanding,ahn2024adam,pmlr-v267-ahn25a}. These results focus on unconstrained optimization,\footnote{A notable exception is the work of \citet{sahinoglu2025finite}, which extended the online-to-nonconvex framework to Riemannian manifolds. However, their setting is incomparable to ours as it excludes boundaries and therefore does not require incorporating the constraints into the stationarity notion.
} whereas \Cref{alg:conversion} allows general closed convex constraints.
The next result summarizes its guarantee:

\begin{algorithm2e}[t]
\caption{Online-to-nonconvex conversion$(\bx_0,K,M,r,\cA)$}
\label{alg:conversion}
\KwIn{initialization $\bx_0\in\cX$, $r$-local online learner $\cA$, epoch budget $K$, epoch length $M$.}

\For{$k=0,1,\ldots,K-1$}{
  initialize a fresh copy $\cA_k$ of $\cA$ at $\bx_k$\;
  \For{$m=1,\ldots,M$}{
    obtain prediction $\by_{k,m}$ from $\cA_k$\;
    sample $s_{k,m}\sim\Unif([0,1])$\;
    $\bz_{k,m}\gets
    \bx_k+s_{k,m}(\by_{k,m}-\bx_k)$\;
    sample $\xi_{k,m}\sim\Xi$\;
    query $\bg_{k,m}\gets\nabla F(\bz_{k,m};\xi_{k,m})$\;
    send $\bg_{k,m}$ to $\cA_k$\;
  }
  sample $J_k\sim\Unif([M])$ and
  $\bx_{k+1}\gets\by_{k,J_k}$\;
}
sample $I\sim\Unif(\{0,\ldots,K-1\})$\;
\Return{$\bigl(
  \bx_I,
  (\bz_{I,m})_{m=1}^M,
  \frac1M\sum_{m=1}^M\bg_{I,m}
\bigr)$}\;
\end{algorithm2e}

\begin{theorem}
\label{thm:generic-conversion}
Assume $\bx_0\in\cX$ satisfies
$f(\bx_0)-\inf_{\bx\in\cX}f(\bx)\le\Delta<\infty$, and let $\cA$ be an $r$-local online learner. The output of
\Cref{alg:conversion} satisfies
\begin{align}
  \E\!\left[\FW_r^{f,C_\cA(\bx_I)}(\bx_I)\right]
  &\leq\E\left[{\max}_{\by\in C_\cA(\bx_I)}
    \Big\langle \frac{1}{M}{\sum}_{m=1}^M\nabla f(\bz_{I,m}), \bx_I-\by\Big\rangle
  \right]
  \nonumber\\
  &\le
  \frac{\Delta}{K}
    +\frac{1}{KM}{\sum}_{k=0}^{K-1}
    \E\left[\Reg_\cA(\bx_k;\bg_{k,1},\dots,\bg_{k,M})
    \right]
  +\frac{rG}{\sqrt M}\,.
  \label{eq:generic-conversion}
\end{align}
\end{theorem}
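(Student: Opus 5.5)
The first inequality is essentially definitional. Each $\bz_{I,m}$ lies on the segment $[\bx_I,\by_{I,m}]$, and $\by_{I,m}\in C_\cA(\bx_I)\subseteq\Ball(\bx_I,r)$, so $\norm{\bz_{I,m}-\bx_I}\le r$. Since $f$ is differentiable, $\nabla f(\bz_{I,m})\in\partial f(\bz_{I,m})\subseteq\partial_r f(\bx_I)$. By convexity of $\partial_r f(\bx_I)$, the average $\bar\bg:=\frac1M\sum_m\nabla f(\bz_{I,m})$ lies in $\partial_r f(\bx_I)$. Plugging $\bar\bg$ into the minimum defining $\FW_r^{f,C_\cA(\bx_I)}(\bx_I)$ gives the first inequality pointwise, and we then take expectations.

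For the second inequality, fix an epoch $k$ and the comparator $\bu_k:=\argmax_{\by\in C_\cA(\bx_k)}\inner{\bar\bg_k}{\bx_k-\by}$, where $\bar\bg_k=\frac1M\sum_m\nabla f(\bz_{k,m})$. This comparator is random but measurable with respect to the whole epoch. The descent step uses the fundamental theorem of calculus with the uniform $s$:
\[
\E_{s_{k,m}}\bigl[\inner{\nabla f(\bz_{k,m})}{\by_{k,m}-\bx_k}\bigr]=f(\by_{k,m})-f(\bx_k).
\]
Averaging over $J_k$, this yields $\E[f(\bx_{k+1})-f(\bx_k)]=\frac1M\sum_m\E\inner{\nabla f(\bz_{k,m})}{\by_{k,m}-\bx_k}$. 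Here we use that $\by_{k,m}$ is determined before $s_{k,m}$ is drawn, so conditioning is legitimate. We then decompose
\[
\inner{\nabla f(\bz_{k,m})}{\by_{k,m}-\bx_k}
=\inner{\bg_{k,m}}{\by_{k,m}-\bu_k}
+\inner{\nabla f(\bz_{k,m})}{\bu_k-\bx_k}
+\inner{\nabla f(\bz_{k,m})-\bg_{k,m}}{\by_{k,m}-\bu_k}.
\]
Summing over $m$, the first term is at most $\Reg_\cA(\bx_k;\bg_{k,1},\dots,\bg_{k,M})$. The second term equals $-M\max_{\by\in C_\cA(\bx_k)}\inner{\bar\bg_k}{\bx_k-\by}$. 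Rearranging, dividing by $M$, summing over $k$, telescoping with $f(\bx_0)-f(\bx_K)\le\Delta$ (as $\bx_K\in\cX$), and dividing by $K$ gives the bound, because $\E$ over uniform $I$ turns $\frac1K\sum_k$ into the expectation at $\bx_I$.

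The main obstacle is the noise term $\frac1M\sum_m\E\inner{\nabla f(\bz_{k,m})-\bg_{k,m}}{\by_{k,m}-\bu_k}$, because $\bu_k$ depends on all of the epoch's randomness. I would split it as
\[
\inner{\boldsymbol\epsilon_{k,m}}{\by_{k,m}-\bx_k}+\inner{\boldsymbol\epsilon_{k,m}}{\bx_k-\bu_k},
\qquad \boldsymbol\epsilon_{k,m}:=\nabla f(\bz_{k,m})-\bg_{k,m}.
\]
The first piece has zero conditional expectation: $\by_{k,m},\bz_{k,m}$ are fixed given the past, and $\xi_{k,m}$ is fresh. For the second piece, apply Cauchy–Schwarz with $\norm{\bx_k-\bu_k}\le r$, which gives a bound of $\frac rM\E\norm{\sum_m\boldsymbol\epsilon_{k,m}}$. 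The $\boldsymbol\epsilon_{k,m}$ form a martingale difference sequence with $\E\norm{\boldsymbol\epsilon_{k,m}}^2\le\E\norm{\bg_{k,m}}^2\le G^2$, so by Jensen and orthogonality of increments $\E\norm{\sum_m\boldsymbol\epsilon_{k,m}}\le G\sqrt M$. This yields the $rG/\sqrt M$ term and completes the argument.
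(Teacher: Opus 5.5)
Your proof is correct and follows essentially the same route as the paper. It uses membership of the averaged gradient in $\partial_r f(\bx_I)$ for the first inequality; the line-integral identity with uniform $s_{k,m}$ and the uniform choice of $J_k$ to telescope; the regret definition; and Cauchy--Schwarz with $\E\norm{\sum_m(\nabla f(\bz_{k,m})-\bg_{k,m})}\le G\sqrt M$ to get the $rG/\sqrt M$ term. The only difference is bookkeeping: the paper puts the stochastic gradients into the descent term $\inner{\bg_{k,m}}{\bx_k-\by_{k,m}}$ and applies unbiasedness there, whereas you keep true gradients in that term and separately discard the zero-mean correction $\inner{\nabla f(\bz_{k,m})-\bg_{k,m}}{\by_{k,m}-\bx_k}$, which is algebraically identical.
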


The guarantee in \Cref{thm:generic-conversion} is stated in terms of the \emph{local} Frank-Wolfe gap over $C_\cA(\bx_I)\subseteq\cX\cap\Ball(\bx_I,r)$, which contains the output $\bx_I$. In \Cref{sec:projection,sec:lmo}, respectively, we relate this local gap to the Goldstein residual and the global Goldstein Frank-Wolfe gap.
We further remark that the additive $rG/\sqrt M$ term in \eqref{eq:generic-conversion} can be relaxed to depend on the variance of the stochastic gradients instead of the second moment. In particular, whenever $\sup_{\bx\in\cX}\E_\xi[\norm{\nabla F(\bx;\xi)-\nabla f(\bx)}^2]\le\sigma^2$,
it can be replaced by $r\sigma/\sqrt M$, which yields a slightly stronger result in low-noise regimes.

\section{Projection-based Algorithm}
\label{sec:projection}

\begin{algorithm2e}[t]
\caption{Online gradient descent with an anchor$(\bx_0,r,\alpha)$}
\label{alg:projected-online-learner}
\KwIn{anchor $\bx_0\in\cX$, radius $r>0$, step size $\alpha>0$.}

$\by_1\gets\bx_0$\;
\For{$m=1,\ldots,M$}{
  predict $\by_m$\;
  receive loss vector $\bell_m$\;
  $\bv_m\gets\proj_\cX(\by_m-\alpha\bell_m)$\;
  \eIf{$\norm{\bv_m-\bx_0}\le r$}{
    $\by_{m+1}\gets\bv_m$\;
  }{
    $\by_{m+1}\gets\bx_0+r\frac{\bv_m-\bx_0}{\norm{\bv_m-\bx_0}}$\;
  }
}
\Return{$\by_{M+1}$}\;
\end{algorithm2e}

In this section, we instantiate \Cref{alg:conversion} with a variant of online gradient descent
\citep{zinkevich2003online} as the online learner. Starting from
$\bx_0$, \Cref{alg:projected-online-learner} projects each gradient step onto $\cX$, and, if necessary, moves the
projected point toward $\bx_0$ so that it remains in $\Ball(\bx_0,r)$, which ensures the $r$-local OLO property.
To analyze \Cref{alg:projected-online-learner}, we build on the simple observation that OGD does not need projections onto $\cX\cap\Ball(\bx_0,r)$ but rather only that for any point
$\by\in\cX\cap\Ball(\bx_0,r)$, the update satisfies
$\norm{\by_{m+1}-\by}\le\norm{\by_m-\alpha\bell_m-\by}$. Hence the standard
regret bound holds over
$\cX\cap\Ball(\bx_0,r)$, as we show in
\Cref{lem:fejer-appendix,lem:projected-regret-appendix}.
Plugged into \Cref{thm:generic-conversion}, this yields a local Frank-Wolfe gap guarantee.

However, this alone is insufficient to control the Goldstein residual. Indeed,
\Cref{thm:generic-conversion} controls the local quantity
\(
\max_{\by\in\cX\cap\Ball(\bx,r)}
\inner{\bg}{\bx-\by}
\),
where $\bg$ is the average gradient returned by the conversion. This quantity is at least
\(
\min\{\eta\norm{\Gmap_{\eta,\cX}(\bx,\bg)}^2,\,
r\norm{\Gmap_{\eta,\cX}(\bx,\bg)}\}
\) (as we show in \Cref{lem:local-linearization-mapping}),
thus OGD can achieve $\norm{\Gmap_{\eta,\cX}(\bx,\bg)}\leq\eps$ 
only for
$\eta\gtrsim r/\eps$, and since the mapping norm is non-increasing in
$\eta$ by \Cref{lem:projection-residual}, this does not control the
smaller $\eta$ parameters needed for the Goldstein residual.

The key observation is that
\(
\norm{\bx-\proj_\cX(\bx-\eta\bg)}
=\eta\norm{\Gmap_{\eta,\cX}(\bx,\bg)}
\lesssim r
\),
thus taking $r\approx\delta$
ensures that
$\bg\in\partial_\delta f(\proj_\cX(\bx-\eta\bg))$,
rather than $\bg\in\partial_\delta f(\bx)$. This displacement introduces an additional error term which is controlled in expectation, so \Cref{alg:projection-expected} repeats this process $O(\log(1/\eps))$-many times to ensure the final residual bound, resulting in the following:

\begin{algorithm2e}[t]
\caption{Projection-based wrapper algorithm$(\bx_0,r,\eps,K,M,N,\alpha)$}
\label{alg:projection-expected}
\KwIn{initialization $\bx_0$, parameters $r,\eps,K,M,N,\alpha$.}

$n_\star\gets N$\;
$\cA:=\text{\Cref{alg:projected-online-learner}}(\cdot,r,\alpha)$\;
\For{$n=1,\ldots,N$}{
    $(\bx_n,(\bz_{n,m})_{m=1}^M,\widehat{\bg}_n)
    \gets
    \text{\Cref{alg:conversion}}(\bx_0,K,M,r,\cA)$\;
  $\bp_n\gets
  \proj_{\cX}\!\left(
    \bx_n-\frac{6r}{\eps}\widehat{\bg}_n
  \right)$\;
  \If{$\norm{\bx_n-\bp_n}\le4r$}{
    $n_\star\gets n$\;
    \textbf{break}\;
  }
}
\Return{$\bx^\mathrm{out}=\bp_{n_\star}$}\;
\end{algorithm2e}

\begin{theorem}
\label{thm:projection-expectation}
Let $\eps,\delta>0$. Assume $\bx_0\in\cX$ satisfies
$f(\bx_0)-\inf_{\bx\in\cX}f(\bx)\le\Delta<\infty$.
Run \Cref{alg:projection-expected} with
\(
  r=\delta/5,\,
  M=\left\lceil\left(\frac{96G}{\eps}\right)^2\right\rceil,\,
  \alpha=\frac{r}{G\sqrt M},\,
  K=\left\lceil\frac{48\Delta}{r\eps}\right\rceil,\,
  N=\left\lceil2\log\left(\frac{4G}{\eps}\right)\right\rceil
\).
Then, using
\(
  \wtO\!\left(\frac{\Delta G^2}{\delta\eps^3}\right)
\)
stochastic-gradient queries, it returns
$\bx^\mathrm{out}\in\cX$ satisfying
$\E[\Res^{f,\cX}_\delta(\bx^\mathrm{out})]\le\eps$.
\end{theorem}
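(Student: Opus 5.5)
We will prove \Cref{thm:projection-expectation} in three steps: a regret bound for the anchored OGD learner, a per-run guarantee from \Cref{thm:generic-conversion}, and a geometric argument that turns a successful run into a residual bound at $\bp_n$, plus a tail bound for the case where no run succeeds.

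\textbf{Step 1: regret of the learner.} By \Cref{lem:fejer-appendix,lem:projected-regret-appendix}, \Cref{alg:projected-online-learner} is $r$-local with $C_\cA(\bx)=\cX\cap\Ball(\bx,r)$, since every prediction lies in this set. Its regret is the standard OGD bound
\[
\Reg\le \frac{(2r)^2}{2\alpha}+\frac{\alpha}{2}\sum_m\norm{\bg_{k,m}}^2 .
\]
Taking expectations gives $\E\Reg\lesssim rG\sqrt M$ for $\alpha=r/(G\sqrt M)$.

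\textbf{Step 2: per-run guarantee.} Plugging Step~1 into \Cref{thm:generic-conversion}, each run $n$ returns $\bx_n$ and $\widehat\bg_n$ with
\[
\E\Bigl[\max_{\by\in\cX\cap\Ball(\bx_n,r)}\inner{\bar\bg_n}{\bx_n-\by}\Bigr]\lesssim \frac{\Delta}{K}+\frac{rG}{\sqrt M}.
\]
Here $\bar\bg_n=\frac1M\sum_m\nabla f(\bz_{n,m})$ is the average of the true gradients. Since $\norm{\bz_{n,m}-\bx_n}\le r$, we have $\bar\bg_n\in\partial_r f(\bx_n)$. The stochastic average satisfies $\E\norm{\widehat\bg_n-\bar\bg_n}\le G/\sqrt M$ by a martingale-difference argument. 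With the stated $K$ and $M$, the right-hand side is at most roughly $r\eps/24$. Markov's inequality then gives, with constant probability (say $\ge1/2$), that both the local gap is $\le r\eps/12$ and $\norm{\widehat\bg_n-\bar\bg_n}\le\eps/48$-ish. The runs are independent restarts from $\bx_0$, so at least one of $N\approx2\log(4G/\eps)$ runs is good except with probability $2^{-N}\lesssim(\eps/4G)^2$.

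\textbf{Step 3: geometry of a good run (main obstacle).} Set $\eta=6r/\eps$. By \Cref{lem:local-linearization-mapping}, a local gap of at most $r\eps/12$ forces
\[
\min\{\eta\norm{\Gmap}^2,\,r\norm{\Gmap}\}\lesssim r\eps/12 .
\]
After accounting for the $\widehat\bg$ versus $\bar\bg$ discrepancy, this yields $\norm{\Gmap_{\eta,\cX}(\bx_n,\widehat\bg_n)}\le\eps/2$. Hence $\norm{\bx_n-\bp_n}\le \eta\eps/2=3r\le4r$, so the break test fires.

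Conversely, whenever the test fires we have $\norm{\bp_n-\bx_n}\le4r$. Then every $\bz_{n,m}$ lies within $5r=\delta$ of $\bp_n$, so $\bar\bg_n\in\partial_\delta f(\bp_n)$. The projection optimality condition gives $-\widehat\bg_n-\Gmap_{\eta,\cX}(\bx_n,\widehat\bg_n)\in\mathcal N_\cX(\bp_n)$. Therefore
\[
\Res^{f,\cX}_\delta(\bp_n)\le\dist(-\bar\bg_n,\mathcal N_\cX(\bp_n))\le \norm{\Gmap_{\eta,\cX}(\bx_n,\widehat\bg_n)}+\norm{\widehat\bg_n-\bar\bg_n} .
\]
The delicate point is that the break test certifies only $\norm{\Gmap}\le 4r/\eta=2\eps/3$, not the bound on $\norm{\Gmap}$ we actually want. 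I would handle this by bounding the residual in expectation at the accepted index. On acceptance, the residual is at most
\[
\norm{\Gmap_{\eta}(\bx_n,\widehat\bg_n)}+\norm{\widehat\bg_n-\bar\bg_n} .
\]
I would control the expected value of the first term directly, without relying on the good event. Sum over $n$ using $\E[\cdot\,\mathbf 1\{n_\star=n\}]\le\E[\cdot]$ only on the small-probability part. Otherwise use the bound $\le 2\eps/3$ plus the expected noise $\eps/48$. This is where the constants $6$, $4$ and $96$ must be matched carefully.

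\textbf{Failure case and counting.} If no run breaks, the output is $\bp_N$, whose residual is trivially at most $\norm{\bar\bg_N}\le G$. Its contribution is $G\cdot 2^{-N}\le\eps/16$-ish. Summing the cases gives $\E[\Res^{f,\cX}_\delta(\bx^{\mathrm{out}})]\le\eps$. The query count is $NKM=\wtO(\Delta G^2/(\delta\eps^3))$.
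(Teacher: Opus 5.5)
Your architecture matches the paper's: OGD regret fed into \Cref{thm:generic-conversion}, Markov to get acceptance probability $\ge 1/2$ per independent trial, the shifted-subdifferential and normal-cone argument of \Cref{lem:projected-postprocessing} on the accepted trial, and a $G\cdot 2^{-N}$ failure term. The final accounting, however, has a gap.

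The ``delicate point'' you flag is not one. Acceptance certifies $\norm{\Gmap_{6r/\eps,\cX}(\bx_{n_\star},\widehat\bg_{n_\star})}\le 2\eps/3$ deterministically, and that suffices because $2\eps/3+\eps/48+\eps/4<\eps$.

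The real subtlety is the noise term $\norm{\bar\bg_{n_\star}-\widehat\bg_{n_\star}}$ at the data-dependent index $n_\star$. Acceptance is correlated with this noise, so:
\begin{itemize}
\item the unconditional bound $\E\norm{\bar\bg_n-\widehat\bg_n}\le\eps/96$ does not transfer to $n_\star$;
\item your good event need not hold at $n_\star$;
\item summing $\E[X_n\mathbf{1}\{n_\star=n\}]\le\E[X_n]$ over $n$ loses a factor $N=\Theta(\log(G/\eps))$, which the fixed choice of $M$ cannot absorb.
\end{itemize}
The missing step uses that the trials are i.i.d.\ and $n_\star$ is the first accepted one. Conditioned on the event $E$ that some trial is accepted, trial $n_\star$ then has the law of a single trial conditioned on acceptance. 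Hence
\[
\E\bigl[\norm{\bar\bg_{n_\star}-\widehat\bg_{n_\star}}\bigm| E\bigr]\le\frac{\E\norm{\bar\bg_n-\widehat\bg_n}}{\Prb(\norm{\bp_n-\bx_n}\le 4r)}\le\frac{\eps}{48}.
\]
Equivalently, weight the $n$-th summand by $\Prb(\text{trials }1,\dots,n-1\text{ rejected})\le 2^{1-n}$.

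Three smaller repairs are also needed.

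\emph{Step 2.} Markov with thresholds $r\eps/12$ and $\eps/48$ gives each event probability only $\ge 1/2$, so their intersection is not guaranteed. Instead, take thresholds with failure probability $1/4$ each. For the expected local gap to be $\le r\eps/24$ as you claim, Step 1 must use $\norm{\by_1-\by}\le r$ (since $\by_1=\bx_0$) rather than $2r$. Then thresholds $r\eps/6$ and $\eps/24$ work, and \Cref{lem:local-linearization-mapping} gives $\norm{\Gmap_{6r/\eps,\cX}(\bx_n,\widehat\bg_n)}\le 5\eps/24\le 2\eps/3$. Alternatively, follow the paper: bound $\E\norm{\Gmap_{6r/\eps,\cX}(\bx_n,\widehat\bg_n)}\le\eps/3$ via \Cref{prop:projection-base} and \eqref{eq:mapping-stability}, then apply Markov once.

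\emph{Step 3.} The correct inclusion is $\Gmap_{\eta,\cX}(\bx_n,\widehat\bg_n)-\widehat\bg_n\in\mathcal N_\cX(\bp_n)$. Your sign on $\Gmap$ is flipped, though the resulting distance bound is the same.

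\emph{Failure case.} $\bar\bg_N$ need not lie in $\partial_\delta f(\bp_N)$, since $\norm{\bp_N-\bx_N}$ may exceed $4r$. Bound the residual by $\norm{\nabla f(\bp_N)}\le G$ instead, using $\mathbf{0}\in\mathcal N_\cX(\bp_N)$ and Jensen.
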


The gradient oracle complexity of \Cref{thm:projection-expectation} is optimal (up to a logarithmic factor). Indeed, in the unconstrained setting as a particular case, the lower bounds of
\citet{arjevani2023lower,cutkosky2023} already imply an
$\Omega(\Delta G^2/(\delta\eps^3))$ rate for the Goldstein residual,
since for $\cX=\R^d$ the residual reduces to the standard Goldstein stationarity measure.
Complementarily, \Cref{cor:goldstein-res-lower} derives the same lower bound on a bounded constrained domain.
Moreover, as recently shown by \citet{kornowski2026complexity}, this rate cannot be improved even if the gradients are deterministic.
\section{Projection-Free Algorithm}
\label{sec:lmo}

In this section, we additionally assume that $\cX$ is compact with diameter
$D:=\max_{\bx,\by\in\cX}\norm{\bx-\by}<\infty$, and replace projections
by access to a linear minimization oracle
\[
  \LMO_{\cX}(\by)\in\argmin_{\bx\in\cX}\inner{\by}{\bx},
\]
which, in many applications, is substantially cheaper to evaluate
\citep{jaggi2010simple,lacoste2013block,jaggi2013,
combettes2021complexity}.
A natural projection-free choice for the OLO algorithm is Follow-the-Perturbed-Leader (FTPL)
\citep{kalai2005efficient,hazan2020faster}. However, in our conversion,
the linear loss depends on the learner's current prediction,
which invalidates the standard FTPL analysis.

Another natural candidate is Online Frank-Wolfe (OFW)
\citep{hazan2012projection}, which uses only one LMO call per round but
has regret much larger than the $O(\delta G\sqrt M)$ needed for our optimal rates. As observed by
\citet{kretzu2021revisiting}, OFW can be viewed as approximately
implementing Follow-the-Regularized-Leader (FTRL)
\citep{shalev2007primal} by taking a single Frank-Wolfe step toward the
minimizer of the current FTRL objective. We instead use $N$ iterations
of the Frank-Wolfe algorithm \citep{frank1956algorithm,jaggi2013} to approximate each quadratic
FTRL subproblem in \Cref{alg:lmo-ftrl}. Taking $N=4M$ preserves the
$O(\delta G\sqrt M)$ regret (\Cref{prop:lmo-online-main}) and hence the
optimal gradient oracle complexity of \Cref{thm:lmo-main}:

\begin{algorithm2e}[t]
\caption{Projection-free follow-the-regularized-leader$(\bx_0,\delta,\alpha,M,N)$}
\label{alg:lmo-ftrl}
\KwIn{Initialization $\bx_0$, radius $\delta$, step size $\alpha$,
budgets $M,N$.}

\For{$m=1,\ldots,M$}{
  $\bw_0\gets\bx_0$\;
  \For{$n=0,\ldots,N-1$}{
    $\bv_n
    \gets
    \LMO_{\cX}\!\left(
      \sum_{j=1}^{m-1}\bell_j
      +\alpha^{-1}(\bw_n-\bx_0)
    \right)$\;
    $\bw_{n+1}
    \gets
    \frac{n}{n+2}\bw_n
    +
    \frac{2}{n+2}\left[
      \left(1-\frac{\delta}{D}\right)\bx_0
      +\frac{\delta}{D}\bv_n
    \right]$\;
  }
  predict $\by_m\gets\bw_N$ and 
  obtain loss vector $\bell_m$\;
}
\end{algorithm2e}

\begin{theorem}
\label{thm:lmo-main}
Let $\eps>0$ and $0<\delta<D$. Assume $\bx_0\in\cX$ satisfies
$f(\bx_0)-\inf_{\bx\in\cX}f(\bx)\le\Delta<\infty$. Set
\(
  K=\left\lceil\frac{2\Delta D}{\delta\eps}\right\rceil,\,
  M=\left\lceil\left(\frac{6DG}{\eps}\right)^2\right\rceil,\,
  N=4M,\,
  \alpha=\frac{\delta}{G\sqrt M}
\).
Run \Cref{alg:conversion} with $r=\delta$ and
\Cref{alg:lmo-ftrl} as $\cA$.
Then, using
\(
  O\!\left(\frac{\Delta D^3G^2}{\delta\eps^3}\right)
\)
stochastic-gradient queries, it returns $\bx^\mathrm{out}\in\cX$ satisfying
$\E[\FW^{f,\cX}_\delta(\bx^\mathrm{out})]\le\eps$.
\end{theorem}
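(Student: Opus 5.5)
The plan is to plug \Cref{alg:lmo-ftrl} into \Cref{thm:generic-conversion} with $r=\delta$. Three ingredients are needed: (i) the learner is $\delta$-local, (ii) its regret is $O(\delta G\sqrt M)$, and (iii) a local Frank-Wolfe gap over $C_\cA(\bx)$ controls the global gap $\FW^{f,\cX}_\delta$.

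For (i), I would set $C_\cA(\bx_0):=(1-\tfrac{\delta}{D})\bx_0+\tfrac{\delta}{D}\cX$. This set is compact and convex and contains $\bx_0$. By convexity it lies in $\cX$. Since $\norm{\bu-\bx_0}\le D$ for every $\bu\in\cX$, it also lies in $\Ball(\bx_0,\delta)$. Each Frank-Wolfe update of \Cref{alg:lmo-ftrl} is a convex combination of $\bw_n$ and a point of $C_\cA(\bx_0)$, starting from $\bw_0=\bx_0\in C_\cA(\bx_0)$. Hence every prediction $\by_m=\bw_N$ lies in $C_\cA(\bx_0)$.

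For (ii), I would invoke \Cref{prop:lmo-online-main}. If I had to prove it, I would argue as follows. The inner loop is standard Frank-Wolfe, with step $2/(n+2)$, on the $\alpha^{-1}$-smooth objective
\[
\Phi_m(\bw)=\Big\langle\sum_{j<m}\bell_j,\bw\Big\rangle+\tfrac{1}{2\alpha}\norm{\bw-\bx_0}^2
\]
over $C_\cA(\bx_0)$. Note that the LMO over the scaled set is exactly the affine image of $\LMO_\cX$. The set has diameter at most $\delta$, so the optimization error is at most $2\delta^2/(\alpha(N+2))$. By strong convexity, $\norm{\by_m-\by_m^\star}^2\le 4\delta^2/(N+2)$, where $\by_m^\star$ is the exact FTRL iterate. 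Exact FTRL with quadratic regularizer has regret at most
\[
\frac{\delta^2}{2\alpha}+\alpha\sum_m\norm{\bell_m}^2 .
\]
The approximation adds $\sum_m\inner{\bell_m}{\by_m-\by_m^\star}\le\sum_m\norm{\bell_m}\cdot 2\delta/\sqrt{N}$. Taking expectations, with $\E\norm{\bg_{k,m}}^2\le G^2$ and Cauchy–Schwarz, $N=4M$, and $\alpha=\delta/(G\sqrt M)$, all terms are $O(\delta G\sqrt M)$. The constant matters: FTRL with $\alpha$ fixed gives $\tfrac{\delta G\sqrt M}{2}+\delta G\sqrt M$, and the approximation error gives $\delta G\sqrt M$. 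The adversarial dependence on predictions is harmless here because FTRL regret bounds hold against adaptive losses.

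Combining in \Cref{thm:generic-conversion} gives
\[
\E\Big[\max_{\by\in C_\cA(\bx_I)}\inner{\bar\bg}{\bx_I-\by}\Big]\le \frac{\Delta}{K}+O\!\left(\frac{\delta G}{\sqrt M}\right),
\]
where $\bar\bg=\frac1M\sum_m\nabla f(\bz_{I,m})$.

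For (iii), each $\bz_{I,m}$ lies on the segment from $\bx_I$ to $\by_{I,m}\in\Ball(\bx_I,\delta)$. Hence $\bar\bg\in\partial_\delta f(\bx_I)$, and I output $\bx^{\mathrm{out}}=\bx_I$. The key scaling identity: for $\by=(1-\tfrac{\delta}{D})\bx_I+\tfrac{\delta}{D}\bu$ we have $\bx_I-\by=\tfrac{\delta}{D}(\bx_I-\bu)$. Therefore
\[
\max_{\bu\in\cX}\inner{\bar\bg}{\bx_I-\bu}=\frac{D}{\delta}\max_{\by\in C_\cA(\bx_I)}\inner{\bar\bg}{\bx_I-\by},
\]
which yields
\[
\E[\FW^{f,\cX}_\delta(\bx_I)]\le \frac{D\Delta}{\delta K}+O\!\left(\frac{DG}{\sqrt M}\right).
\]
With $K=\lceil 2\Delta D/(\delta\eps)\rceil$ the first term is at most $\eps/2$. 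With $M=\lceil(6DG/\eps)^2\rceil$ the second term is at most $\eps/2$, provided the regret constant is at most $3\delta G\sqrt M$ overall; this is where the constants must be tracked carefully.

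The query count is $KM=O(\Delta D^3G^2/(\delta\eps^3))$. I expect the main obstacle to be step (ii): making the inexact-FTRL argument rigorous with explicit constants. In particular, I would use a be-the-leader bound for exact FTRL and only then add the Frank-Wolfe approximation error via strong convexity.
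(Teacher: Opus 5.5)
Your proposal is correct and is essentially the paper's proof. It combines \Cref{thm:generic-conversion} with $r=\delta$, the regret bound $2\delta G\sqrt M$ of \Cref{prop:lmo-online-main} (proved along the lines you sketch: Frank--Wolfe on the homothetic set, strong convexity, then the quadratic FTRL bound), and the scaling identity of \Cref{lem:homothetic-main}, giving $\frac{\delta}{D}\E[\FW^{f,\cX}_\delta(\bx^{\mathrm{out}})]\le\frac{\Delta}{K}+\frac{3\delta G}{\sqrt M}\le\frac{\delta\eps}{D}$.

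One caution on your self-contained version of (ii): the exact-FTRL term must be $\frac{\alpha}{2}\sum_m\norm{\bell_m}^2$, as in the strong-convexity FTRL bound used in \Cref{prop:lmo-online-main}. The be-the-leader bound $\alpha\sum_m\norm{\bell_m}^2$ only gives regret $2.5\,\delta G\sqrt M$, and hence only $\E[\FW^{f,\cX}_\delta(\bx^{\mathrm{out}})]\le\frac{\eps}{2}+\frac{7\eps}{12}=\frac{13\eps}{12}$ with the stated $M$.
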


We later provide in
\Cref{thm:smooth-fw-lower} a lower bound for the Frank-Wolfe gap even for smooth objectives, while \Cref{cor:goldstein-fw-lower} yields the matching
$\Omega(\Delta D^3G^2/(\delta\eps^3))$ lower bound for the Goldstein Frank-Wolfe gap. Thus, the gradient oracle complexity of \Cref{thm:lmo-main} is optimal up to universal constants.
Notably, the analysis described above introduces an additional cost in terms of LMO calls. Faster Frank-Wolfe
variants can improve this complexity under stronger geometric
assumptions on $\cX$
\citep{garber2015faster,garber2016linearly}, while other projection-free
online methods use different oracle models
\citep{mhammedi2022efficient,pmlr-v291-mhammedi25a}. Extending our
approach in these directions is left for future work.

\section{Zeroth-Order Optimization}
\label{sec:zero-order}
In this section, instead of access to a stochastic-gradient
oracle, we assume access to stochastic component values
$F(\bx;\xi)$. We assume that each stochastic component
$F(\,\cdot\,;\xi):\R^d\to\R$ is $L(\xi)$-Lipschitz, with
\[
    \E_{\xi\sim\Xi}[L(\xi)^2]\le G^2.
\]

As is standard in this setting, we will make use of 
a two-point gradient estimator of the \emph{randomized smoothing}. To that end, the uniform smoothing over a ball of a function $f$ is defined as
\begin{align*}
f_\rho(\bx):=\E_{\bz\sim\Unif(\Ball(\mathbf{0},\rho))}\left[f(\bx+\bz)\right]\,.
\end{align*}
It is well known
(see \citealp{yousefian2012stochastic,duchi2012randomized}) that $f_\rho$ is
$G$-Lipschitz, is $O(\sqrt{d}G\rho^{-1})$-smooth, and satisfies
$|f(\bx)-f_\rho(\bx)|\le \rho G$ for every $\bx\in\R^d$.
To obtain zeroth-order results, we simply apply \Cref{alg:conversion} to $f_\rho$, by
replacing its stochastic-gradient query by a call to the gradient estimator of \Cref{alg:two-point}
with inputs $\bz_{k,m},\rho,\xi_{k,m}$.
The formal analysis, which is deferred to \Cref{app:zero-order-proofs}, yields the following results:

\begin{algorithm2e}[t]
\caption{Two-point gradient estimator$(\bz,\rho,\xi)$}
\label{alg:two-point}
\KwIn{query point $\bz\in\R^d$, smoothing radius $\rho>0$, stochastic sample $\xi$.}

sample $\bu\sim\Unif(\Sph^{d-1})$\;
query $F(\bz+\rho\bu;\xi)$ and $F(\bz-\rho\bu;\xi)$\;
\Return{
\(
\dfrac{d}{2\rho}
\bigl(F(\bz+\rho\bu;\xi)-F(\bz-\rho\bu;\xi)\bigr)\bu
\)
}\;
\end{algorithm2e}

\begin{theorem}
\label{thm:two-point-projection}
Let \(\eps,\delta>0\). Assume $\bx_0\in\cX$ satisfies
$f(\bx_0)-\inf_{\bx\in\cX}f(\bx)\le\Delta$, and set
$\rho:=\min\{\delta/2,\Delta/(2G)\}$.
Run \Cref{alg:projection-expected} on $f_\rho$ using
\Cref{alg:two-point}, with parameters chosen as in
\Cref{thm:projection-expectation}, with $(G,\delta,\Delta)$ there
replaced by $(4(2\pi)^{1/4}\sqrt dG,\delta/2,2\Delta)$.
Then, using
\(
  \wtO\!\left(
    \frac{\Delta G^2d}
         {\delta\eps^3}
  \right)
\)
function-value queries, it returns $\bx^\mathrm{out}\in\cX$ satisfying $  \E[\Res_\delta^{f,\cX}(\bx^\mathrm{out})]\le\eps$.
\end{theorem}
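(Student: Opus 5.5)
The plan is to reduce \Cref{thm:two-point-projection} to \Cref{thm:projection-expectation} applied to the smoothed objective $f_\rho$, treating the two-point estimator of \Cref{alg:two-point} as the stochastic-gradient oracle for $f_\rho$. I will carry this out in three steps: verify the oracle assumptions, apply the first-order theorem with the substituted parameters, and transfer the resulting guarantee from $f_\rho$ back to $f$.

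\textbf{Step 1: oracle assumptions for $f_\rho$.} Since each $F(\cdot;\xi)$ is $L(\xi)$-Lipschitz, $f_\rho$ is differentiable. The standard identity $\nabla f_\rho(\bz)=\E_{\bu}\bigl[\tfrac{d}{2\rho}(f(\bz+\rho\bu)-f(\bz-\rho\bu))\bu\bigr]$, combined with Fubini over $\xi$, shows that the estimator is unbiased for $\nabla f_\rho(\bz)$. For the second moment, condition on $\xi$ and apply the concentration of Lipschitz functions on the sphere (the argument of Shamir's two-point analysis). The map $\bu\mapsto F(\bz+\rho\bu;\xi)-F(\bz-\rho\bu;\xi)$ is $2\rho L(\xi)$-Lipschitz and odd, hence has mean zero. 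Its fourth moment, or alternatively its second moment via a Poincar\'e-type bound, is therefore $O(\rho^2L(\xi)^2/d)$. This gives
\[
\E\norm{\widehat\bg}^2\le \tfrac{d^2}{4\rho^2}\,\E\bigl[(F(\bz+\rho\bu;\xi)-F(\bz-\rho\bu;\xi))^2\bigr]\le 16\sqrt{2\pi}\,dG^2,
\]
that is, $G'=4(2\pi)^{1/4}\sqrt d\,G$. Tracking the constant to match exactly $4(2\pi)^{1/4}$ is the only fiddly calculation, and I expect it to be the main technical step. I would take the explicit sub-Gaussian tail constant for Lipschitz functions on $\Sph^{d-1}$ and integrate the tail.

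\textbf{Step 2: apply the first-order theorem.} We have $f_\rho(\bx_0)-\inf_\cX f_\rho\le \Delta+2\rho G\le 2\Delta$, because $\rho\le\Delta/(2G)$. The first-order analysis never uses smoothness of the objective, only differentiability and the oracle moment bounds. Hence \Cref{thm:projection-expectation} applies to $f_\rho$ with $(G',\delta/2,2\Delta)$ and gives $\E[\Res^{f_\rho,\cX}_{\delta/2}(\bx^\mathrm{out})]\le\eps$. Each gradient query costs two function values, so the total number of function-value queries is $\wtO(2\Delta\cdot dG^2/((\delta/2)\eps^3))=\wtO(\Delta G^2 d/(\delta\eps^3))$.

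\textbf{Step 3: transfer back to $f$.} It suffices to show $\partial_{\delta/2}f_\rho(\bx)\subseteq\partial_\delta f(\bx)$ for $\rho\le\delta/2$. Then the minimum defining $\Res^{f,\cX}_\delta(\bx)$ is taken over a larger set, which gives $\Res^{f,\cX}_\delta\le\Res^{f_\rho,\cX}_{\delta/2}$ pointwise, and the claim follows by taking expectations. For the inclusion, $f_\rho$ is $C^1$, so $\partial f_\rho(\by)=\{\nabla f_\rho(\by)\}$. Moreover, $\nabla f_\rho(\by)=\E_{\bz}[\nabla f(\by+\bz)]$ with $\bz\sim\Unif(\Ball(\mathbf 0,\rho))$, using that $f$ is differentiable and Lipschitz. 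This expectation lies in the closed convex set $\partial_\rho f(\by)$. For $\by\in\Ball(\bx,\delta/2)$, we have $\Ball(\by,\rho)\subseteq\Ball(\bx,\delta)$, so $\nabla f_\rho(\by)\in\partial_\delta f(\bx)$. Taking convex hulls over $\by\in\Ball(\bx,\delta/2)$ yields the inclusion; compactness of the Goldstein subdifferential ensures the closure step is harmless.
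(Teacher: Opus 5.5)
Your proposal is correct and takes the same route as the paper. You bound the initial gap of $f_\rho$ by $2\Delta$, apply \Cref{thm:projection-expectation} to $f_\rho$ with the two-point estimator as the oracle, and transfer the guarantee via $\partial_{\delta/2}f_\rho(\bx)\subseteq\partial_{\delta/2+\rho}f(\bx)\subseteq\partial_\delta f(\bx)$. The only difference is that the paper cites the estimator's unbiasedness and second-moment bound, and the smoothing inclusion, rather than re-deriving them. Your sketches of both are sound, and you need not reproduce the constant $16\sqrt{2\pi}$ exactly: any second-moment bound of order $dG^2$ with a constant no larger than that suffices.
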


\begin{theorem}
\label{thm:two-point-lmo}
Let $\eps>0$. Suppose $\cX$ is compact with diameter $D$, and let
\(0<\delta<D\). Assume $\bx_0\in\cX$ satisfies
$f(\bx_0)-\inf_{\bx\in\cX}f(\bx)\le\Delta$, and set
$\rho:=\min\{\delta/2,\Delta/(2G)\}$.
Run \Cref{alg:conversion} on $f_\rho$ using
\Cref{alg:two-point} and \Cref{alg:lmo-ftrl} as $\cA$, with parameters
chosen as in \Cref{thm:lmo-main}, with $(G,\delta,\Delta)$ there
replaced by $(4(2\pi)^{1/4}\sqrt dG,\delta/2,2\Delta)$.
Then, using
\(
  O\!\left(
    \frac{\Delta D^3G^2d}{\delta\eps^3}
  \right)
\)
function-value queries, it returns $\bx^\mathrm{out}\in\cX$ satisfying $\E[\FW_\delta^{f,\cX}(\bx^\mathrm{out})]\le\eps$.
\end{theorem}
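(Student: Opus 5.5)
The plan is to treat \Cref{thm:two-point-lmo} as a black-box application of \Cref{thm:lmo-main} to the smoothed objective $f_\rho$, and then transfer the guarantee back to $f$.

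\textbf{Step 1: check the hypotheses for $f_\rho$.} First, $f_\rho$ is differentiable, since it is smooth.

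Second, the two-point estimator of \Cref{alg:two-point} is unbiased for $\nabla f_\rho$. Writing $\widehat\bg$ for its output at $\bz$, the standard identity (sphere sampling, symmetrization of $\pm\bu$) gives $\E_{\bu}[\widehat\bg]=\nabla F_\rho(\bz;\xi)$. Taking expectation over $\xi$ then gives $\nabla f_\rho(\bz)$.

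Third, we need a second-moment bound. Conditional on $\xi$, the estimator is built from the $L(\xi)$-Lipschitz function $F(\cdot;\xi)$. The known concentration-on-the-sphere bound (as in Shamir's two-point analysis, with the constant $\sqrt{2\pi}$ coming from Lipschitz concentration) yields
\[
\E_\bu\|\widehat\bg\|^2\le 16\sqrt{2\pi}\,d\,L(\xi)^2 .
\]
Averaging over $\xi$ gives $\E\|\widehat\bg\|^2\le(4(2\pi)^{1/4}\sqrt d G)^2$, which is exactly the replacement for $G$ in the statement.

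Fourth, the initial gap becomes $f_\rho(\bx_0)-\inf_\cX f_\rho\le\Delta+2\rho G\le2\Delta$, using $|f-f_\rho|\le\rho G$ and $\rho\le\Delta/(2G)$.

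One subtlety is that the queries $\bz\pm\rho\bu$ may leave $\cX$. This is harmless, because $F$ is defined on all of $\R^d$ and $f_\rho$ is a function on $\R^d$.

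\textbf{Step 2: apply \Cref{thm:lmo-main}.} We invoke it for $f_\rho$ with parameters $(4(2\pi)^{1/4}\sqrt dG,\delta/2,2\Delta)$, noting that $\delta/2<D$. The output then satisfies $\E[\FW^{f_\rho,\cX}_{\delta/2}(\bx^\mathrm{out})]\le\eps$. The query count is the number of gradient queries, namely $O(\Delta D^3 dG^2/(\delta\eps^3))$, times $2$ function evaluations per query.

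\textbf{Step 3: transfer from $f_\rho$ to $f$.} We claim that $\partial_{\delta/2}f_\rho(\bx)\subseteq\partial_{\delta}f(\bx)$ whenever $\rho\le\delta/2$. Since the FW gap is a minimum over the Goldstein set, the inclusion gives $\FW^{f,\cX}_\delta\le\FW^{f_\rho,\cX}_{\delta/2}$ pointwise, which finishes the proof.

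To prove the inclusion, write $\nabla f_\rho(\by)=\E_\bz[\nabla f(\by+\bz)]$, which is valid because $f$ is differentiable and Lipschitz. For $\by\in\Ball(\bx,\delta/2)$, each point $\by+\bz$ lies in $\Ball(\bx,\delta)$. So $\nabla f_\rho(\by)$ lies in the closed convex hull of gradients of $f$ on $\Ball(\bx,\delta)$. Because $\partial_\delta f(\bx)$ is compact and convex (Goldstein sets of Lipschitz functions are compact), this hull is contained in $\partial_\delta f(\bx)$. The same holds for limits of gradients and for convex hulls, so the inclusion extends from $\nabla f_\rho(\by)$ to all of $\partial_{\delta/2}f_\rho(\bx)$.

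\textbf{Main obstacle.} The only step with real content is the second-moment constant for the estimator in Step 1. I would cite the standard lemma rather than reprove it. The closedness issue for the Goldstein set in Step 3 is a minor technicality handled by compactness.
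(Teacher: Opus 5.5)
Your proposal is correct and follows the paper's proof step by step. Both arguments bound the smoothed gap by $\Delta+2\rho G\le 2\Delta$, then use unbiasedness and the $16\sqrt{2\pi}\,dG^2$ second-moment bound of the two-point estimator to apply \Cref{thm:lmo-main} to $f_\rho$ with the rescaled parameters, and finally transfer back via $\partial_{\delta/2}f_\rho(\bx)\subseteq\partial_{\delta/2+\rho}f(\bx)\subseteq\partial_\delta f(\bx)$. The only difference is that you prove this inclusion directly, via $\nabla f_\rho(\by)=\E_{\bz}[\nabla f(\by+\bz)]$ and compactness of the Goldstein set, whereas the paper cites \Cref{lem:smoothing-goldstein-inclusion} from \citet{kornowski2024}; your direct argument is valid.
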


As previously mentioned, both of the results above improve over the query complexities of \citet{liu2024} by a factor of $\Omega(\sqrt{d})$.

\begin{remark}[Overall complexity and parallel computation] \label{rem: overall cmplx}
Although our zeroth-order algorithms attain the optimal dependence of the query complexity on the
dimension and stationarity parameters discussed above, it is not clear
whether their computational complexity is optimal in terms of the number of projections
or LMO calls. Note that as stated,
\Cref{thm:two-point-projection,thm:two-point-lmo} require
\(\wtO(\Delta G^2d/(\delta\eps^3))\) projections and
\(O(\Delta D^5G^4d^2/(\delta\eps^5))\) LMO calls, respectively.
However, as in \citet{kornowski2024}, these computational costs can be reduced by
averaging independent zeroth-order gradient estimators, a standard approach
for trading query complexity for parallel computation
\citep{duchi2018minimax,bubeck2019complexity}: At each gradient query, we
can average $b$ independent two-point estimators, thus reducing their
second moment from order $dG^2$ to $G^2(1+d/b)$, while using $b$ times
as many samples per round. Taking $b=d$ preserves the query complexity
up to constants, while making the projection/LMO complexity dimension-free, with
\(\wtO(\Delta G^2/(\delta\eps^3))\) projections or \(O(\Delta D^5G^4/(\delta\eps^5))\) LMO calls. In contrast, the guarantees of \citet{liu2024} retain an
explicit dependence on $d$ in their computational complexity, although with a better dependence on $\eps$. Determining the optimal tradeoff
between query and projection/LMO-oracle complexity remains open.

\end{remark}

\section{Lower Bounds}
\label{sec:lower-bounds}

We further show that the convergence rates obtained in this work are essentially optimal.
For simplicity, we prove lower bounds for (possibly-)randomized zero-respecting first-order algorithms, a class of algorithms which includes all algorithms of interest in this setting (including our first-order algorithms). Extending such a lower bound to any randomized algorithm is by now standard in the literature \citep{carmon2017lower,arjevani2023lower,cutkosky2023},  while still being rather technical. As this is not the main result in our paper but rather complementary to demonstrate the optimality of our algorithms, we chose to focus here on zero-respecting algorithms for the sake of conciseness.
We state the
lower bounds informally here and defer their formal statements and proofs to
\Cref{app:lower-bounds}, where we also define the class of zero-respecting algorithms.

We start by providing a lower bound in terms of the Frank-Wolfe gap.
Although the Frank-Wolfe gap is a standard stationarity measure for constrained
nonconvex optimization and is widely used to analyze projection-free methods
\citep{jaggi2013,lacoste2016convergence}, to our knowledge, no complexity lower bound for the Frank-Wolfe gap was previously known in the stochastic setting, even
for smooth objectives. Our first result provides such a lower bound.

\begin{theorem}[\Cref{thm:smooth-fw-lower}, informal]
Under mild parameter conditions, there is a dimension
$d=O(H\Delta D^2/\eps^2)$ such that for any zero-respecting first-order
algorithm there is an $H$-smooth stochastic
objective $f$ satisfying
$f(\mathbf 0)-\inf_{\bx\in\cX}f(\bx)\le\Delta$ for which achieving
expected Frank-Wolfe gap at most $\eps$
requires $\Omega\left(\frac{\Delta HG^2D^4}{\eps^4}\right)$
stochastic-gradient queries.
\end{theorem}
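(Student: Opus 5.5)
The plan is to adapt the construction of \citet{arjevani2023lower} for stochastic nonconvex optimization, with the Euclidean gradient norm replaced by the Frank-Wolfe gap over a constraint set. For the constraint set I will take a Euclidean ball $\cX=\Ball(\mathbf 0,D/2)\subseteq\R^d$, or possibly a box $[-D/(2\sqrt d),D/(2\sqrt d)]^d$, whichever makes the gap computation easier. On $\cX$ I place a scaled copy of the ``probability-$p$ zero-chain'' hard instance $\bar F_T$. It is defined through the functions $\Psi(x_{i-1})\Phi(x_i)$, so that coordinates are discovered one at a time. Its stochastic gradient oracle reveals progress on the next coordinate only with probability $p$, and this gives the variance budget $G^2\approx \text{(scale)}^2/p$.

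Concretely, set $f(\bx)=\frac{H\lambda^2}{\ell_1}\bar F_T(\bx/\lambda)$, with a scale $\lambda$ small enough that the relevant portion of the chain, $\|\bx\|\lesssim\lambda\sqrt T$, fits inside $\cX$. This requires $\lambda\sqrt T\lesssim D$. The steps, in order, are as follows.

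First, check smoothness ($H$), the suboptimality bound $\Delta\approx H\lambda^2 T$, and the variance $\approx (H\lambda)^2/p$. Second, invoke the standard zero-respecting progress argument: with high probability, after $o(T/p)$ queries all iterates satisfy $\prog(\bx)<T$.

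Third, and most importantly, I need a key lemma. Whenever $\prog(\bx)<T$ and $\bx\in\cX$, the Frank-Wolfe gap $\max_{\bu\in\cX}\inner{\nabla f(\bx)}{\bx-\bu}$ is at least of order $D\cdot H\lambda$. The argument starts from the Arjevani et al.\ fact that some large coordinate satisfies $|\partial_j \bar F_T(\bx)|\ge 1$, where $j=\prog(\bx)+1$ and $x_j=0$. Since $x_j=0$, moving along $\pm e_j$ remains feasible. For the box, coordinatewise separability gives the gap directly as $\sum_i |\partial_i f|\cdot(\text{slack}_i)\ge |\partial_j f|\cdot D/(2\sqrt d)$, which loses a $\sqrt d$ factor. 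This suggests using the ball instead. For the ball, the gap equals $\inner{\nabla f}{\bx}+\frac D2\|\nabla f(\bx)\|$. The difficulty is the term $\inner{\nabla f}{\bx}$, which could be negative and cancel the second term. I will handle it by keeping $\|\bx\|\le\lambda\sqrt T\ll D$ on the relevant region. One way to ensure this is to compose the objective with the soft-projection/rotation trick of Arjevani et al.\ (or Carmon et al.), which keeps the hard region inside a ball of radius $\approx\lambda\sqrt T$. Then $|\inner{\nabla f}{\bx}|\le \|\nabla f\|\lambda\sqrt T\le \frac D4\|\nabla f\|$, and the gap is at least $\frac D4\|\nabla f\|\gtrsim D H\lambda$.

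Finally, set $DH\lambda\approx\eps$, $\Delta\approx H\lambda^2T$, and $p\approx H^2\lambda^2/G^2$. This gives $\lambda=\eps/(DH)$, $T\approx\Delta H D^2/\eps^2$ (the claimed dimension), and query complexity
\[
\frac{T}{p}\approx \frac{\Delta HD^2}{\eps^2}\cdot\frac{G^2D^2}{\eps^2}=\frac{\Delta HG^2D^4}{\eps^4}.
\]
The ``mild parameter conditions'' are exactly what is needed to make $T\ge1$, $p\le1$, and the feasibility constraint $\lambda\sqrt T\lesssim D$ hold. This last condition reads $\sqrt{\Delta/H}\lesssim D$.

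I expect the main obstacle to be the key lemma: lower-bounding the gap despite the term $\inner{\nabla f}{\bx}$ and the boundary of $\cX$. A further complication is that the algorithm may output points near the boundary, where the scaled chain coordinates are not confined. If the radius control fails, the fallback is to redefine $f$ outside a ball of radius $\lambda\sqrt T$ by a smooth radial extension, so that the gap stays large there as well.
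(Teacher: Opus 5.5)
Your setup matches the paper's in several respects: $\cX=\Ball(\mathbf 0,D/2)$, the Arjevani et al.\ chain scaled so that $H\lambda\approx\eps/D$, the identity $\FW^{f,\cX}(\bx)=\inner{\nabla f(\bx)}{\bx}+\frac D2\norm{\nabla f(\bx)}$, and the same $T$ and $p$. You also correctly identify $\inner{\nabla f(\bx)}{\bx}$ as the crux. However, your primary way of handling it fails. The output may be any point of $\cX$, and a soft projection $\rho$ does not confine it. It only compresses the chain into a ball of radius $R\approx\lambda\sqrt T$, which makes $f$ nearly flat far away: the gradient is damped by $(1+\norm{\bx}^2/R^2)^{-1/2}\le R/\norm{\bx}$. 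Concretely, once coordinate $1$ is revealed (after $O(1/p)$ queries), a zero-respecting algorithm can output $\bx=\frac D2\mathbf e_1$. Then $\rho(\bx)/\lambda\approx\sqrt T\,\mathbf e_1$, and the only non-negligible gradient entry is the second one, of size $\Theta(\eta R/D)$ with $\eta\approx H\lambda$. The gap is therefore $\Theta(\eta R)\ll D\eta\approx\eps$ in the regime $R\ll D$ that your construction needs. So the large gap at far points must be built into $f$ itself. Your fallback points in this direction, but ``redefining $f$ outside a ball'' by gluing is left unspecified. Smoothness, the zero-chain property, and the gap in the transition region would all need checking.

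The paper uses no soft projection at all. It adds the global term $\frac{4M}{D}\norm{\bx}^2$ to every $F_0(\cdot;\xi)$, where $M\approx\sqrt{H\Delta}$ bounds $\norm{\nabla f_0}$, and it splits at $\norm{\bx}=D/4$ rather than at $\lambda\sqrt T$. If $\norm{\bx}\le D/4$, your Cauchy--Schwarz step gives gap $\ge\frac D4|[\nabla f(\bx)]_j|\ge\frac D4\bigl(\eta-\frac{8M}{D}|x_j|\bigr)\ge\frac{D\eta}{8}$. If $\norm{\bx}>D/4$, the choice $\bu=-\frac D2\frac{\bx}{\norm{\bx}}$ gives gap $\ge(\frac D2+\norm{\bx})(\frac{8M}{D}\norm{\bx}-M)\ge\frac{DM}{2}\ge\frac{D\eta}{8}$. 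This requires four checks.
(i) The chain must be scaled to smoothness $H/2$, with $\frac{8M}{D}\le\frac H2$, i.e.\ $\Delta\lesssim HD^2$ (equivalent to your $\lambda\sqrt T\lesssim D$).
(ii) The hard coordinate must be $j=\prog_1(\bx/\lambda)+1$, for which $|x_j|\le\lambda\lesssim\eta/H\le\frac{D\eta}{16M}$, so the radial term cannot cancel $[\nabla f_0(\bx)]_j$. Note that $x_j=0$ is not available here, and at $\prog_0(\bx/\lambda)+1$ the gradient entry can vanish.
(iii) Zero-respecting behaviour is preserved because the added gradient $\frac{8M}{D}\bx$ is supported on $\operatorname{supp}(\bx)$. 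The output must also satisfy the support condition, so that it can be appended as a final query in the progress argument.
(iv) The oracle bounds the second moment $\E\norm{\nabla F(\bx;\xi)}^2=\norm{\nabla f(\bx)}^2+\E\norm{\nabla F(\bx;\xi)-\nabla f(\bx)}^2$, not only the variance, and $\norm{\nabla f}\le 5M$ on $\cX$. You therefore also need $G\gtrsim\sqrt{H\Delta}$, which is missing from your list of parameter conditions.
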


\Cref{thm:smooth-fw-lower} gives a tight lower bound, as the standard minibatched stochastic
Frank-Wolfe method \citep{reddi2016stochastic} achieves
the same stochastic-gradient
complexity under our stochastic first-order oracle.

As a consequence of the theorem above, we obtain lower bounds for the nonsmooth
Goldstein Frank-Wolfe gap and for the Goldstein residual which match our upper bounds.

\begin{corollary}[\Cref{cor:goldstein-fw-lower,cor:goldstein-res-lower}, informal]
Under the same setting as above, achieving expected $\delta$-Frank-Wolfe gap at most $\eps$ requires $\Omega\left(
        \frac{\Delta D^3G^2}{\delta\eps^3}
    \right)$
stochastic-gradient queries, and achieving expected $\delta$-Goldstein residual at most $\eps$ requires $\Omega\left(
        \frac{\Delta G^2}{\delta\eps^3}
    \right)$
stochastic-gradient queries.
\end{corollary}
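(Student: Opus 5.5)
The plan is to reduce both claims to the smooth Frank-Wolfe lower bound of \Cref{thm:smooth-fw-lower}. The idea is to take its hard instance with a smoothness parameter $H$ tuned to $\delta$ and $\eps$, so that small Goldstein stationarity forces small ordinary stationarity. The Goldstein residual bound will then follow from the Goldstein Frank-Wolfe bound through \Cref{lem:prelim-stability}.

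\emph{Step 1: Goldstein gap versus classical gap for smooth objectives.} Let $f$ be $H$-smooth and $\bx\in\cX$. Every $\by\in\Ball(\bx,\delta)$ satisfies $\norm{\nabla f(\by)-\nabla f(\bx)}\le H\delta$. The ball $\Ball(\nabla f(\bx),H\delta)$ is convex, so it contains the whole convex hull, giving $\partial_\delta f(\bx)\subseteq\Ball(\nabla f(\bx),H\delta)$. Hence for every $\bg\in\partial_\delta f(\bx)$ and $\bu\in\cX$,
\[
\inner{\nabla f(\bx)}{\bx-\bu}\le\inner{\bg}{\bx-\bu}+H\delta D .
\]
Maximizing over $\bu$ and then minimizing over $\bg$ yields $\max_{\bu\in\cX}\inner{\nabla f(\bx)}{\bx-\bu}\le\FW^{f,\cX}_\delta(\bx)+H\delta D$.

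\emph{Step 2: choose $H$.} Set $H:=\eps/(\delta D)$. By Step 1, any algorithm whose output has expected $\delta$-Goldstein Frank-Wolfe gap at most $\eps$ also has expected classical Frank-Wolfe gap at most $2\eps$. We then invoke \Cref{thm:smooth-fw-lower} with accuracy $2\eps$ and this $H$. It gives that any zero-respecting algorithm needs
\[
\Omega\Bigl(\frac{\Delta HG^2D^4}{\eps^4}\Bigr)=\Omega\Bigl(\frac{\Delta G^2D^3}{\delta\eps^3}\Bigr)
\]
stochastic-gradient queries. The hard function is smooth, hence differentiable, and its stochastic gradients have second moment at most $G^2$, so it is a valid instance of our nonsmooth setting.

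\emph{Step 3: the residual.} By \Cref{lem:prelim-stability}, $\FW^{f,\cX}_\delta\le D\,\Res^{f,\cX}_\delta$ on the same compact hard domain. Thus expected residual at most $\eps$ implies expected Goldstein gap at most $D\eps$. Applying Step 2 with $\eps$ replaced by $D\eps$, i.e. $H=\eps/\delta$, gives
\[
\Omega\Bigl(\frac{\Delta G^2D^3}{\delta D^3\eps^3}\Bigr)=\Omega\Bigl(\frac{\Delta G^2}{\delta\eps^3}\Bigr).
\]

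\emph{Main obstacle.} The algebra above is straightforward; the main obstacle is verifying that the ``mild parameter conditions'' of \Cref{thm:smooth-fw-lower} still hold with $H=\eps/(\delta D)$ (resp.\ $\eps/\delta$). These are typically of the form that $\eps$ be small relative to $\sqrt{H\Delta}D$, $GD$ and $HD^2$, and that the dimension $d=O(H\Delta D^2/\eps^2)$ be at least one. With our choice of $H$, I expect them to translate into natural conditions such as $\delta<D$, $\delta\lesssim\Delta/\eps$, and $\eps\lesssim G$. I would state these explicitly in the formal versions, adjusting constants (e.g.\ taking $H=c\,\eps/(\delta D)$ for a small constant $c$) if a condition requires slack.
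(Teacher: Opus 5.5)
Your proposal is correct and is essentially the paper's proof. The paper proves the same comparison $\FW^{f,\cX}(\bx)-HD\delta\le\FW^{f,\cX}_\delta(\bx)\le D\Res^{f,\cX}_\delta(\bx)$ in \Cref{lem:smooth-goldstein-comparison}, applies \Cref{thm:smooth-fw-lower} with $H=\eps/(D\delta)$ and target $2\eps$ to obtain \Cref{cor:goldstein-fw-lower}, and then applies that corollary with target $D\eps$ (so $H=\eps/\delta$) to obtain \Cref{cor:goldstein-res-lower}.

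The parameter conditions you left open follow by substituting $H=\eps/(D\delta)$ into $\Delta\le cHD^2$, $2\eps\le cD\sqrt{H\Delta}$ and $G\ge C\sqrt{H\Delta}$. This gives $C\eps\delta/D\le\Delta\le c\eps D/\delta$ and $G^2\ge C\eps\Delta/(D\delta)$. Your guessed list omits the upper bound on $\Delta$, and no constant rescaling of $H$ is needed.
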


For the Goldstein residual, the resulting rate recovers the known lower
bounds from unconstrained nonsmooth nonconvex optimization
\citep{arjevani2023lower,cutkosky2023}. Moreover,
\citet{kornowski2026complexity} recently showed that the same lower bound holds even
in the deterministic-gradient setting.

In the zeroth-order setting, the dependence on the dimension and on the
stationarity parameters $\delta,\eps$ is optimal separately
\citep{duchi2015optimal,cutkosky2023,kornowski2024}. Moreover,
\citet{zhang2026jointlowerboundszerothorder} recently established the
joint $\Omega(dG^2\Delta/(\delta\eps^3))$ lower bound for the Goldstein
residual, matching \Cref{thm:two-point-projection} up to logarithmic factors.
Their lower bound restricts queries to a bounded ball, a restriction that
appears removable by the standard soft-projection construction
\citep{carmon2017lower}. Extending their construction to the Goldstein
Frank-Wolfe gap appears possible and is left for future work.

\section{Analysis}

\subsection{Basic Geometric Properties}
\label{app:projection-geometry}

We collect a few simple facts about Euclidean projection and the
stationarity measures we use. We first record two elementary properties
of Euclidean projection.

\begin{fact}
\label{fact:projection}
Let $\cX\subseteq\R^d$ be nonempty, closed, and convex. For
$\bz\in\R^d$ and $\bp\in\cX$, we have
$\bp=\proj_{\cX}(\bz)$ if and only if
$\bz-\bp\in\mathcal N_{\cX}(\bp)$.
Moreover, $\proj_{\cX}$ is nonexpansive, namely
$\norm{\proj_{\cX}(\bz)-\proj_{\cX}(\bw)}
\le\norm{\bz-\bw}$ for every $\bz,\bw\in\R^d$.
\end{fact}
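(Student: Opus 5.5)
The plan is to obtain both parts of \Cref{fact:projection} from the variational characterization of the minimizer of the strongly convex function $\phi(\bx):=\frac12\norm{\bx-\bz}^2$ over $\cX$. Existence and uniqueness of $\proj_\cX(\bz)$ hold because $\cX$ is nonempty and closed, so $\phi$ is coercive and attains its minimum on $\cX$. The minimizer is unique because $\phi$ is strictly convex and $\cX$ is convex. We take this well-posedness of $\proj_\cX$ for granted.

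\emph{Characterization.} Fix $\bp\in\cX$ and $\by\in\cX$, and expand
\[
\norm{\by-\bz}^2=\norm{\bp-\bz}^2+2\inner{\bp-\bz}{\by-\bp}+\norm{\by-\bp}^2 .
\]
Suppose first that $\bz-\bp\in\mathcal N_\cX(\bp)$, i.e., $\inner{\bz-\bp}{\by-\bp}\le0$ for all $\by\in\cX$. Then the middle term is nonnegative, so $\norm{\by-\bz}\ge\norm{\bp-\bz}$ for every $\by\in\cX$, hence $\bp=\proj_\cX(\bz)$.

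Conversely, suppose $\bp=\proj_\cX(\bz)$. For $\by\in\cX$ and $t\in(0,1]$, the point $\bp+t(\by-\bp)$ lies in $\cX$ by convexity. Applying the same expansion with $\by$ replaced by this point and using optimality of $\bp$ gives
\[
0\le 2t\inner{\bp-\bz}{\by-\bp}+t^2\norm{\by-\bp}^2 .
\]
Dividing by $t$ and letting $t\downarrow0$ yields $\inner{\bz-\bp}{\by-\bp}\le0$, which is exactly $\bz-\bp\in\mathcal N_\cX(\bp)$.

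\emph{Nonexpansiveness.} Let $\bp=\proj_\cX(\bz)$ and $\bq=\proj_\cX(\bw)$. Apply the normal-cone inequality at $\bp$ with test point $\bq$, and at $\bq$ with test point $\bp$:
\[
\inner{\bz-\bp}{\bq-\bp}\le0,\qquad \inner{\bw-\bq}{\bp-\bq}\le0 .
\]
Adding the two gives $\inner{(\bz-\bw)-(\bp-\bq)}{\bq-\bp}\le0$, which rearranges to
\[
\norm{\bp-\bq}^2\le\inner{\bz-\bw}{\bp-\bq} .
\]
By Cauchy--Schwarz the right-hand side is at most $\norm{\bz-\bw}\norm{\bp-\bq}$. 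Dividing by $\norm{\bp-\bq}$ when it is nonzero (the case $\bp=\bq$ being trivial) gives $\norm{\bp-\bq}\le\norm{\bz-\bw}$.

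There is no real obstacle here. The only point requiring care is the limiting argument in the converse direction, which uses convexity of $\cX$ to keep the segment from $\bp$ to $\by$ inside the feasible set.
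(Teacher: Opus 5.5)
Your proof is correct and complete. The paper gives no proof of this statement; it records it as a standard fact about Euclidean projection, and your argument is the usual textbook one.

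The expansion of $\norm{\by-\bz}^2$ handles both directions of the characterization. In the ``if'' direction it even gives $\norm{\by-\bz}^2\ge\norm{\bp-\bz}^2+\norm{\by-\bp}^2$, so uniqueness of the minimizer comes for free. Summing the two normal-cone inequalities gives the firm nonexpansiveness bound $\norm{\bp-\bq}^2\le\inner{\bz-\bw}{\bp-\bq}$, and Cauchy--Schwarz then yields nonexpansiveness.
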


\begin{lemma}
\label{lem:projection-residual}
For every $\bx\in\cX$ and $\bg\in\R^d$, the map
$\eta\mapsto\norm{\Gmap_{\eta,\cX}(\bx,\bg)}$ is non-increasing, and
\begin{equation*}
  \sup_{\eta>0}\norm{\Gmap_{\eta,\cX}(\bx,\bg)}
  =
  \dist\bigl(-\bg,\mathcal{N}_\cX(\bx)\bigr).
\end{equation*}
\end{lemma}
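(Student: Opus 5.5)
Write $\bp_\eta:=\proj_\cX(\bx-\eta\bg)$, so that $\Gmap_{\eta,\cX}(\bx,\bg)=(\bx-\bp_\eta)/\eta$. The argument will rest on the variational characterization in \Cref{fact:projection}: $\bx-\eta\bg-\bp_\eta\in\mathcal N_\cX(\bp_\eta)$, that is,
\[
\inner{\bx-\eta\bg-\bp_\eta}{\by-\bp_\eta}\le0\quad\text{for all }\by\in\cX .
\]

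\textbf{Monotonicity.} I will prove this by the classical two-parameter comparison. Fix $0<\eta_1<\eta_2$ and write $\bp_i=\bp_{\eta_i}$, $\bd_i=\bx-\bp_i$. Apply the variational inequality at $\bp_1$ with test point $\by=\bp_2$, and at $\bp_2$ with $\by=\bp_1$. Dividing by $\eta_1$ and $\eta_2$ respectively, this gives
\[
\inner{\bd_1/\eta_1-\bg}{\bd_1-\bd_2}\ge 0,\qquad \inner{\bd_2/\eta_2-\bg}{\bd_2-\bd_1}\ge0 .
\]
Adding the two inequalities eliminates $\bg$ and yields $\inner{\bd_1/\eta_1-\bd_2/\eta_2}{\bd_1-\bd_2}\ge0$.

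Expanding, with $a=\norm{\bd_1}$, $b=\norm{\bd_2}$ and $c=\inner{\bd_1}{\bd_2}$:
\[
a^2/\eta_1+b^2/\eta_2\ge c\,(1/\eta_1+1/\eta_2)\quad\text{and}\quad c\le ab .
\]
A short manipulation gives $(a/\eta_1-b/\eta_2)(a-b)\ge 0$, possibly with a correction term. This is the step I expect to need the most care: the product form has to be extracted cleanly, possibly by using each variational inequality separately with $\bx$ itself as test point.

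Once the product form holds, suppose $a/\eta_1<b/\eta_2$. Then $a<b\,\eta_1/\eta_2<b$, so both factors are negative and the product is positive, which is consistent and gives no contradiction. The plain dichotomy therefore does not suffice. Instead I will use the known monotonicity fact (Calamai--Moré): $\norm{\bd_\eta}$ is non-decreasing in $\eta$ and $\norm{\bd_\eta}/\eta$ is non-increasing. The first follows from the summed inequality, which rules out $a>b$. For the second I will combine the summed inequality with $c\le ab$ to get $(a-b)(a/\eta_1-b/\eta_2)\le0$ in the relevant regime, and then conclude $a/\eta_1\ge b/\eta_2$.

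\textbf{Supremum formula.} By monotonicity, the supremum equals $\lim_{\eta\to0^+}\norm{\Gmap_{\eta,\cX}(\bx,\bg)}$, so I will prove two inequalities for this limit.

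For the upper bound, take any $\bn\in\mathcal N_\cX(\bx)$. By \Cref{fact:projection}, $\proj_\cX(\bx+\eta\bn)=\bx$. Nonexpansiveness then gives
\[
\norm{\bx-\bp_\eta}\le\norm{(\bx+\eta\bn)-(\bx-\eta\bg)}=\eta\norm{\bg+\bn},
\]
so $\norm{\Gmap_{\eta,\cX}(\bx,\bg)}\le\dist(-\bg,\mathcal N_\cX(\bx))$ for every $\eta$.

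For the lower bound, set $\bn_\eta:=(\bx-\eta\bg-\bp_\eta)/\eta\in\mathcal N_\cX(\bp_\eta)$, so that $\Gmap_{\eta,\cX}(\bx,\bg)=\bg+\bn_\eta$. The upper bound shows that the family $\bn_\eta$ is bounded, and that $\bp_\eta\to\bx$ since $\norm{\bx-\bp_\eta}=O(\eta)$. Take a subsequence along which $\bn_\eta\to\bn$. The graph of the normal cone is closed: the inequality $\inner{\bn_\eta}{\by-\bp_\eta}\le0$ passes to the limit for every $\by\in\cX$. Hence $\bn\in\mathcal N_\cX(\bx)$, and
\[
\lim_{\eta\to0^+}\norm{\Gmap_{\eta,\cX}(\bx,\bg)}=\norm{\bg+\bn}\ge\dist(-\bg,\mathcal N_\cX(\bx)) .
\]
This closes the argument.
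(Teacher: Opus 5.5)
Your proof of the supremum formula is correct and coincides with the paper's: nonexpansiveness together with $\proj_\cX(\bx+\eta\bn)=\bx$ gives the upper bound, and for the lower bound you use that $\Gmap_{\eta,\cX}(\bx,\bg)-\bg\in\mathcal N_\cX(\bp_\eta)$ is bounded, pass to a convergent subsequence, and invoke closedness of the normal-cone graph. The gap is in the monotonicity part, and it comes from a sign error in the first step. Since $\bp_1=\proj_\cX(\bx-\eta_1\bg)$, \Cref{fact:projection} with test point $\bp_2$ gives $\inner{\bx-\eta_1\bg-\bp_1}{\bp_2-\bp_1}\le0$. Because $\bp_2-\bp_1=\mathbf{d}_1-\mathbf{d}_2$, this reads $\inner{\mathbf{d}_1/\eta_1-\bg}{\mathbf{d}_1-\mathbf{d}_2}\le0$, not $\ge0$. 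Symmetrically, $\inner{\mathbf{d}_2/\eta_2-\bg}{\mathbf{d}_2-\mathbf{d}_1}\le0$, so the sum is $\inner{\mathbf{d}_1/\eta_1-\mathbf{d}_2/\eta_2}{\mathbf{d}_1-\mathbf{d}_2}\le0$.

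With your sign, nothing follows. A lower bound $a^2/\eta_1+b^2/\eta_2\ge c(1/\eta_1+1/\eta_2)$ cannot be chained with the upper bound $c\le ab$. The wrong-signed inequality also does not exclude $a>b$: collinear $\mathbf{d}_1,\mathbf{d}_2$ with $a>b>0$ satisfy it. So your claims that the summed inequality ``rules out $a>b$'' and yields $(a-b)(a/\eta_1-b/\eta_2)\le0$ ``in the relevant regime'' are not supported by what you derived. The dead end you hit with the dichotomy is a symptom of the flipped sign.

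With the correct sign, the argument is short and needs no correction term. The chain $a^2/\eta_1+b^2/\eta_2\le c(1/\eta_1+1/\eta_2)\le ab(1/\eta_1+1/\eta_2)$ rearranges to $(a-b)(a/\eta_1-b/\eta_2)\le0$. Your original dichotomy then closes immediately. If $a/\eta_1<b/\eta_2$, then $a<b\,\eta_1/\eta_2\le b$, so both factors are negative and the product is strictly positive, a contradiction. Hence $\norm{\Gmap_{\eta_1,\cX}(\bx,\bg)}\ge\norm{\Gmap_{\eta_2,\cX}(\bx,\bg)}$. The same inequality also gives $a\le b$, if you want both Calamai--Mor\'e facts.

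The paper does not prove this step; it cites Lemma~3 of Gafni and Bertsekas, so a bare citation would have matched it. What does not work is the self-contained derivation as you wrote it. As a side remark, the lower bound in the supremum formula does not actually require monotonicity: any subsequential limit of $\norm{\Gmap_{\eta_j,\cX}(\bx,\bg)}$ is at most the supremum.
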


\begin{proof}
The monotonicity follows from
\citet[Lemma~3]{gafni1982convergence}.
For any $\bn\in \mathcal{N}_\cX(\bx)$,
\Cref{fact:projection} gives
$\bx=\proj_{\cX}(\bx+\eta\bn)$. Hence, by nonexpansiveness,
\[
  \norm{\Gmap_{\eta,\cX}(\bx,\bg)}
  =
  \frac{1}{\eta}
  \norm{
    \proj_{\cX}(\bx-\eta\bg)
    -
    \proj_{\cX}(\bx+\eta\bn)
  }
  \le \norm{\bg+\bn}.
\]
Minimizing over $\bn\in \mathcal{N}_\cX(\bx)$ and then taking the supremum over
$\eta>0$ gives
\[
  \sup_{\eta>0}\norm{\Gmap_{\eta,\cX}(\bx,\bg)}
  \le
  \dist\bigl(-\bg,\mathcal{N}_\cX(\bx)\bigr).
\]
To prove the opposite inequality and thus equality,
it holds by monotonicity that
\[
  L:=
  \lim_{\eta\downarrow0}
  \norm{\Gmap_{\eta,\cX}(\bx,\bg)}
  =
  \sup_{\eta>0}\norm{\Gmap_{\eta,\cX}(\bx,\bg)}.
\]
The preceding bound shows that
$\Gmap_{\eta,\cX}(\bx,\bg)$ is bounded as $\eta\downarrow0$.
Hence, by Bolzano--Weierstrass, there exist a sequence
$\eta_j\downarrow0$ and $\bv\in\R^d$ such that
\(
  \Gmap_{\eta_j,\cX}(\bx,\bg)\to\bv.
\)
Denote
\(
  \bp_j:=\proj_{\cX}(\bx-\eta_j\bg).
\)
Then $\bp_j\to\bx$, and by \Cref{fact:projection},
\[
  \Gmap_{\eta_j,\cX}(\bx,\bg)-\bg
  =
  \frac{\bx-\eta_j\bg-\bp_j}{\eta_j}
  \in \mathcal{N}_\cX(\bp_j),
\]
since the normal cone is a cone. Thus, for every $\by\in\cX$, we have 
\(
  \inner{
    \Gmap_{\eta_j,\cX}(\bx,\bg)-\bg
  }{
    \by-\bp_j
  }
  \le 0.
\)
Taking $j\to\infty$ gives
\(
  \inner{\bv-\bg}{\by-\bx}\le 0
\) for every $\by\in\cX$,
and hence $\bv-\bg\in \mathcal{N}_\cX(\bx)$. Therefore
\[
  \dist\bigl(-\bg,\mathcal{N}_\cX(\bx)\bigr)
  \le \norm{\bv}
  =L.
\]
\end{proof}

\begin{lemma}
\label{lem:prelim-stability}
For every $\bx\in\cX$, $\bg,\bh\in\R^d$, and $\eta>0$,
\begin{equation}
  \norm{
    \Gmap_{\eta,\cX}(\bx,\bg)
    -
    \Gmap_{\eta,\cX}(\bx,\bh)
  }
  \le \norm{\bg-\bh}.
  \label{eq:mapping-stability}
\end{equation}
If $\cX$ is compact with diameter $D$, then
\begin{equation}
  \abs{
    \max_{\bu\in\cX}\inner{\bg}{\bx-\bu}
    -
    \max_{\bu\in\cX}\inner{\bh}{\bx-\bu}
  }
  \le D\norm{\bg-\bh},
  \qquad
  \max_{\bu\in\cX}\inner{\bg}{\bx-\bu}
  \le D\dist\bigl(-\bg,\mathcal{N}_\cX(\bx)\bigr).
  \label{eq:gap-stability}
\end{equation}
\end{lemma}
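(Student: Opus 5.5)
The plan is to prove the three inequalities separately, each by a short direct argument.

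For \eqref{eq:mapping-stability}, expand the definition. The terms $\bx/\eta$ cancel, leaving
\[
\Gmap_{\eta,\cX}(\bx,\bg)-\Gmap_{\eta,\cX}(\bx,\bh)=\tfrac1\eta\bigl(\proj_\cX(\bx-\eta\bh)-\proj_\cX(\bx-\eta\bg)\bigr).
\]
By the nonexpansiveness in \Cref{fact:projection}, the norm of the right-hand side is at most $\frac1\eta\norm{\eta\bg-\eta\bh}=\norm{\bg-\bh}$.

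For the first inequality in \eqref{eq:gap-stability}, use that both maxima are attained, since $\cX$ is compact and the objectives are continuous. Let $\bu^\star$ attain the maximum for $\bg$. Then
\[
\max_{\bu}\inner{\bg}{\bx-\bu}-\max_{\bu}\inner{\bh}{\bx-\bu}\le\inner{\bg-\bh}{\bx-\bu^\star}\le\norm{\bg-\bh}\norm{\bx-\bu^\star}\le D\norm{\bg-\bh},
\]
where the last step uses $\bx,\bu^\star\in\cX$. Exchanging the roles of $\bg$ and $\bh$ gives the absolute value.

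For the second inequality in \eqref{eq:gap-stability}, fix any $\bn\in\mathcal N_\cX(\bx)$. For every $\bu\in\cX$ we have $\inner{\bn}{\bu-\bx}\le0$, hence
\[
\inner{\bg}{\bx-\bu}=\inner{\bg+\bn}{\bx-\bu}+\inner{\bn}{\bu-\bx}\le\norm{\bg+\bn}\,D.
\]
Take the maximum over $\bu$, then the infimum over $\bn$. Since $\norm{\bg+\bn}=\norm{\bn-(-\bg)}$, this infimum is exactly $D\,\dist(-\bg,\mathcal N_\cX(\bx))$.

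None of these steps is a real obstacle. The only points needing care are the attainment of the maxima, which follows from compactness, and the sign convention in the normal cone.
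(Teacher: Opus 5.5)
Your proof is correct and matches the paper's argument: the two gap bounds in \eqref{eq:gap-stability} use the same Cauchy--Schwarz/diameter estimate and the same insertion of an arbitrary $\bn\in\mathcal N_\cX(\bx)$ followed by an infimum over $\bn$. The only difference is in \eqref{eq:mapping-stability}, where the paper just cites this Lipschitz property of the gradient mapping from prior work, whereas you prove it directly in one line from the nonexpansiveness in \Cref{fact:projection}, which is self-contained.
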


\begin{proof}
Lipschitzness of the mapping appears in
\citet[Lemma~4]{ghadimi2016accelerated}.
For the gap bounds,
\[
  \abs{\inner{\bg-\bh}{\bx-\bu}}
  \le D\norm{\bg-\bh},
\]
and, for every $\bn\in \mathcal{N}_\cX(\bx)$ and $\bu\in\cX$,
\[
  \inner{\bg}{\bx-\bu}
  \le
  \inner{\bg+\bn}{\bx-\bu}
  \le D\norm{\bg+\bn}.
\]
Taking the appropriate maximum and infimum proves
\eqref{eq:gap-stability}.
\end{proof}

\begin{lemma}
\label{lem:local-linearization-mapping}
For every $\bx\in\cX$, $\bg\in\R^d$, $r>0$, and $\eta>0$,
\[
\max_{\by\in\cX\cap\Ball(\bx,r)}
\inner{\bg}{\bx-\by}
\ge
\min\left\{
    \eta\norm{\Gmap_{\eta,\cX}(\bx,\bg)}^2,\,
    r\norm{\Gmap_{\eta,\cX}(\bx,\bg)}
\right\}.
\]
\end{lemma}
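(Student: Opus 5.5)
Let $\bp:=\proj_\cX(\bx-\eta\bg)$ and $\bv:=\Gmap_{\eta,\cX}(\bx,\bg)=(\bx-\bp)/\eta$, so that $\norm{\bx-\bp}=\eta\norm{\bv}$. The plan is to exhibit a single feasible point $\by\in\cX\cap\Ball(\bx,r)$ on the segment $[\bx,\bp]$ for which $\inner{\bg}{\bx-\by}$ is at least the claimed minimum. Since $\cX$ is convex and $\bx,\bp\in\cX$, every point $\by_t:=\bx+t(\bp-\bx)$ with $t\in[0,1]$ lies in $\cX$.

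\textbf{Key inequality.} Apply \Cref{fact:projection} with $\bz=\bx-\eta\bg$: we get $\inner{\bx-\eta\bg-\bp}{\bx-\bp}\le0$, because $\bx\in\cX$. Rearranging gives
\[
\eta\inner{\bg}{\bx-\bp}\ge\norm{\bx-\bp}^2,
\qquad\text{i.e.}\qquad
\inner{\bg}{\bx-\bp}\ge\eta\norm{\bv}^2 .
\]
This is the standard gradient-mapping descent inequality.

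\textbf{Case split on the length of $\bx-\bp$.} If $\eta\norm{\bv}\le r$, take $\by=\bp\in\cX\cap\Ball(\bx,r)$. The key inequality then gives the bound $\eta\norm{\bv}^2$ directly. Otherwise $\eta\norm{\bv}>r$. In this case take $t=r/(\eta\norm{\bv})\in(0,1)$, so that $\norm{\by_t-\bx}=r$ and $\by_t$ is feasible. By linearity,
\[
\inner{\bg}{\bx-\by_t}=t\inner{\bg}{\bx-\bp}\ge t\eta\norm{\bv}^2=r\norm{\bv}.
\]
In either case the maximum is at least one of the two terms, and hence at least their minimum.

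The only delicate step is noticing that the projection optimality condition must be tested against $\bx$ itself; everything else is linearity and convexity. The degenerate case $\bv=\mathbf 0$ is trivial, since taking $\by=\bx$ gives $0$.
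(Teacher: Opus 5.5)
Your proposal is correct and follows essentially the same argument as the paper. Both test the projection optimality condition at $\bx$ to get $\inner{\bg}{\bx-\bp}\ge\eta\norm{\Gmap_{\eta,\cX}(\bx,\bg)}^2$, then split on whether $\norm{\bx-\bp}\le r$, using either $\bp$ itself or the point at distance $r$ along the segment $[\bx,\bp]$.
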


\begin{proof}
Let
\(
\bp:=\proj_{\cX}(\bx-\eta\bg).
\)
By \Cref{fact:projection},
\(
\bx-\eta\bg-\bp\in\mathcal N_{\cX}(\bp).
\)
Since $\bx\in\cX$, the definition of the normal cone gives
\[
\inner{\bx-\eta\bg-\bp}{\bx-\bp}\le0,
\]
and hence
\[
\inner{\bg}{\bx-\bp}
\ge
\frac{1}{\eta}\norm{\bx-\bp}^2
=
\eta\norm{\Gmap_{\eta,\cX}(\bx,\bg)}^2.
\]

If $\norm{\bx-\bp}\le r$, then
$\bp\in\cX\cap\Ball(\bx,r)$, so
\[
\max_{\by\in\cX\cap\Ball(\bx,r)}
\inner{\bg}{\bx-\by}
\ge
\eta\norm{\Gmap_{\eta,\cX}(\bx,\bg)}^2.
\]
Otherwise, by convexity,
\(
\bx+\frac{r}{\norm{\bp-\bx}}(\bp-\bx)
\in\cX\cap\Ball(\bx,r).
\)
Thus
\[
\max_{\by\in\cX\cap\Ball(\bx,r)}
\inner{\bg}{\bx-\by}
\ge
\frac{r}{\norm{\bp-\bx}}
\inner{\bg}{\bx-\bp} \ge
\frac{r}{\eta}\norm{\bp-\bx}
=
r\norm{\Gmap_{\eta,\cX}(\bx,\bg)}.
\]
The result follows.
\end{proof}
\subsection{Proof of \Cref{thm:generic-conversion}: Constrained OLO-to-Nonconvex Conversion}
\label{app:reduction-proofs}

Fix an epoch $k$ and denote
\(
  \bnabla_{k,m}:=\nabla f(\bz_{k,m})
\).
Since for every $m\in[M]$ we have $\bz_{k,m}\in \Ball(\bx_k,r)$, it follows that
\(
  \frac1M\sum_{m=1}^M\bnabla_{k,m}
  \in\partial_r f(\bx_k)\,,
\)
and thus the first inequality in \eqref{eq:generic-conversion} follows from the
definition of $\FW_r^{f,C_\cA(\bx_I)}(\bx_I)$.
Since
\(C_\cA(\bx_k)\subseteq\Ball(\bx_k,r)\), we have by Cauchy--Schwarz
\begin{align*}
&\max_{\by\in C_\cA(\bx_k)}
\sum_{m=1}^M
\inner{\bnabla_{k,m}}{\bx_k-\by}
\\
&\quad\le
\sum_{m=1}^M
\inner{\bg_{k,m}}{\bx_k-\by_{k,m}}
+
\max_{\by\in C_\cA(\bx_k)}
\sum_{m=1}^M
\inner{\bg_{k,m}}{\by_{k,m}-\by}
+r\left\|
  \sum_{m=1}^M
  \bigl(\bnabla_{k,m}-\bg_{k,m}\bigr)
\right\|.
\end{align*}

By unbiasedness and the second-moment bound,
\[
  \E\!\left[
    \norm{\bnabla_{k,m}-\bg_{k,m}}^2
    \,\middle|\,\bz_{k,m}
  \right]
  =
  \E\!\left[
    \norm{\bg_{k,m}}^2
    \,\middle|\,\bz_{k,m}
  \right]
  -\norm{\bnabla_{k,m}}^2
  \le G^2.
\]
Moreover, since each oracle call uses a fresh independent sample, the
errors $\bnabla_{k,m}-\bg_{k,m}$ have conditional mean zero, and hence
their cross terms vanish. Therefore
\(
  \E\left\|
    \sum_{m=1}^M
    \bigl(\bnabla_{k,m}-\bg_{k,m}\bigr)
  \right\|^2
  \le MG^2,
\)
and Jensen's inequality gives
\(
  \E\left\|
    \sum_{m=1}^M
    \bigl(\bnabla_{k,m}-\bg_{k,m}\bigr)
  \right\|
  \le G\sqrt M.
\)

By conditional unbiasedness of the oracle, the uniform choice of
$s_{k,m}$, and the line-integral identity,
\begin{align*}
\E\inner{\bg_{k,m}}{\bx_k-\by_{k,m}}
&=
\E\left[
  \int_0^1
  \inner{
    \nabla f\!\left(\bx_k+s(\by_{k,m}-\bx_k)\right)
  }{
    \bx_k-\by_{k,m}
  }\,ds
\right]
\\
&=
\E\bigl[f(\bx_k)-f(\by_{k,m})\bigr].
\end{align*}
Since $\bx_{k+1}=\by_{k,J_k}$ with $J_k\sim\Unif([M])$,
\[
  \E\left[
    \sum_{m=1}^M
    \inner{\bg_{k,m}}{\bx_k-\by_{k,m}}
  \right]
  =
  M\E\bigl[f(\bx_k)-f(\bx_{k+1})\bigr].
\]

Taking expectations in the first display and dividing by $M$ therefore
gives
\begin{align*}
&\E\left[
  \max_{\by\in C_\cA(\bx_k)}
  \inner{
    \frac1M\sum_{m=1}^M\bnabla_{k,m}
  }{
    \bx_k-\by
  }
\right]
\\
&\qquad\le
\E\bigl[f(\bx_k)-f(\bx_{k+1})\bigr]
+
\frac1M
\E\left[
  \Reg_\cA(\bx_k;\bg_{k,1},\dots,\bg_{k,M})
\right]
+
\frac{rG}{\sqrt M}.
\end{align*}
Finally, summing over $k=0,\ldots,K-1$ and dividing by $K$ gives
\begin{align*}
&\frac1K\sum_{k=0}^{K-1}
\E\left[
  \max_{\by\in C_\cA(\bx_k)}
  \inner{
    \frac1M\sum_{m=1}^M\bnabla_{k,m}
  }{
    \bx_k-\by
  }
\right]
\\
&\qquad\le
\frac{\E[f(\bx_0)-f(\bx_K)]}{K}
+
\frac{1}{KM}\sum_{k=0}^{K-1}
\E\left[
  \Reg_\cA(\bx_k;\bg_{k,1},\dots,\bg_{k,M})
\right]
+
\frac{rG}{\sqrt M}
\\
&\qquad\le
\frac{\Delta}{K}
+
\frac{1}{KM}\sum_{k=0}^{K-1}
\E\left[
  \Reg_\cA(\bx_k;\bg_{k,1},\dots,\bg_{k,M})
\right]
+
\frac{rG}{\sqrt M},
\end{align*}
where the second inequality uses
$f(\bx_K)\ge\inf_{\bx\in\cX}f(\bx)$. Since $I\sim\Unif(\{0,\ldots,K-1\})$, this proves the second inequality in
\eqref{eq:generic-conversion}.

\subsection{Projection-Based Application}
\label{app:projection-proofs}

\begin{lemma}
\label{lem:fejer-appendix}
For a point $\bx\in\cX$ and radius $r>0$, define the clipped projection $\Phi_{\bx,r}:\R^d\to\R^d$ as 
\begin{align*} 
    \Phi_{\bx,r}(\bv)=\bx+
    \min\set{1,\frac{r}{\norm{\proj_\cX(\bv)-\bx}}}
    (\proj_\cX(\bv)-\bx)\,.
\end{align*}
Then for any $\bv\in\R^d$ we have $\Phi_{\bx,r}(\bv)\in \cX\cap\Ball(\bx,r)$, and for any $\by\in \cX\cap\Ball(\bx,r)$,
\[
  \norm{\Phi_{\bx,r}(\bv)-\by}\le\norm{\bv-\by}.
\]
\end{lemma}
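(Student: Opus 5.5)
The argument splits into two steps: first project onto $\cX$, then (if necessary) pull the projected point radially toward $\bx$. Each step should be nonexpansive toward any target $\by\in\cX\cap\Ball(\bx,r)$.

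Write $\bp:=\proj_\cX(\bv)$, so that $\Phi_{\bx,r}(\bv)=\bx+\lambda(\bp-\bx)$ with $\lambda=\min\{1,r/\norm{\bp-\bx}\}\in(0,1]$. If $\bp=\bx$, interpret $\lambda=1$.

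\textbf{Membership.} $\Phi_{\bx,r}(\bv)$ is a convex combination of $\bx\in\cX$ and $\bp\in\cX$, so it lies in $\cX$. Its distance from $\bx$ is $\lambda\norm{\bp-\bx}\le r$, so it also lies in $\Ball(\bx,r)$.

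\textbf{First step: projection.} Fix $\by\in\cX\cap\Ball(\bx,r)$. Since $\by\in\cX$, nonexpansiveness from \Cref{fact:projection} together with $\proj_\cX(\by)=\by$ gives
\[
\norm{\bp-\by}=\norm{\proj_\cX(\bv)-\proj_\cX(\by)}\le\norm{\bv-\by}.
\]

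\textbf{Second step: radial clipping.} It remains to show $\norm{\Phi_{\bx,r}(\bv)-\by}\le\norm{\bp-\by}$. If $\norm{\bp-\bx}\le r$, then $\Phi_{\bx,r}(\bv)=\bp$ and there is nothing to prove. Otherwise $\Phi_{\bx,r}(\bv)=\bx+r(\bp-\bx)/\norm{\bp-\bx}$, which is exactly the Euclidean projection of $\bp$ onto the closed convex set $\Ball(\bx,r)$. Since $\by\in\Ball(\bx,r)$, applying \Cref{fact:projection} to $\Ball(\bx,r)$ gives
\[
\norm{\proj_{\Ball(\bx,r)}(\bp)-\by}\le\norm{\bp-\by}.
\]
Chaining this with the first step proves the claim.

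The main point to verify is that the radial rescaling coincides with projection onto the ball, which is standard. Note that the argument never requires projecting onto the intersection $\cX\cap\Ball(\bx,r)$; composing the two separate projections is enough.
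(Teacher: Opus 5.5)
Your proof is correct and follows the paper's argument. Membership comes from $\Phi_{\bx,r}(\bv)$ being a convex combination of $\bx$ and $\proj_\cX(\bv)$. The distance bound comes from identifying $\Phi_{\bx,r}(\bv)=\proj_{\Ball(\bx,r)}(\proj_\cX(\bv))$ and chaining the nonexpansiveness of the two projections, both of which fix $\by$.
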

\begin{proof}
Let $\bv\in\R^d$ and $\bw:=\proj_\cX(\bv)\in\cX$. If \(\norm{\bw-\bx}\le r\), then \(\Phi_{\bx,r}(\bv)=\bw\in \cX\cap \Ball(\bx,r)\). Otherwise it holds that $\Phi_{\bx,r}(\bv)=(1-\frac{r}{\norm{\bw-\bx}})\bx
+\frac{r}{\norm{\bw-\bx}}\bw$,
and since \(\cX\) is convex and \(\bx,\bw\in\cX\), it holds that \(\Phi_{\bx,r}(\bv)\in \cX\). By construction \(\Phi_{\bx,r}(\bv)\in \Ball(\bx,r)\), so overall \(\Phi_{\bx,r}(\bv)\in \cX\cap\Ball(\bx,r)\), proving the first claim.

Furthermore, let $\by\in \cX\cap\Ball(\bx,r)$ and note that
\(
\Phi_{\bx,r}(\bv)
=
\proj_{\Ball(\bx,r)}\!\left(\proj_{\cX}(\bv)\right)\), 
so nonexpansiveness of the projections 
proves the second claim.
\end{proof}

\Cref{lem:projected-regret-appendix} is simply the regret guarantee of online gradient descent (see, e.g., \cite{hazan2016introduction}) instantiated over $\cX\cap\Ball(\bx,r)$, which we give here for completeness.

\begin{lemma}
\label{lem:projected-regret-appendix}
For any sequence of loss vectors $\bell_1,\ldots,\bell_M$, \Cref{alg:projected-online-learner} satisfies, for every $\by\in\cX\cap\Ball(\bx_0,r)$,
\[
  \sum_{m=1}^M\inner{\bell_m}{\by_m-\by}
  \le
  \frac{r^2}{2\alpha}
  +\frac{\alpha}{2}\sum_{m=1}^M\norm{\bell_m}^2.
\]
\end{lemma}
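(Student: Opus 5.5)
We follow the standard online gradient descent potential argument, using \Cref{lem:fejer-appendix} in place of the usual projection onto the comparator set. Fix any comparator $\by\in\cX\cap\Ball(\bx_0,r)$. First we observe that the update of \Cref{alg:projected-online-learner} is exactly $\by_{m+1}=\Phi_{\bx_0,r}(\by_m-\alpha\bell_m)$. Indeed, $\bv_m=\proj_\cX(\by_m-\alpha\bell_m)$, and the two branches of the if-statement coincide with the factor $\min\set{1,r/\norm{\bv_m-\bx_0}}$ in the definition of $\Phi_{\bx_0,r}$. Consequently \Cref{lem:fejer-appendix} gives two things: every prediction $\by_m$ lies in $\cX\cap\Ball(\bx_0,r)$ (for $m=1$ this holds since $\by_1=\bx_0$), and
\[
\norm{\by_{m+1}-\by}\le\norm{\by_m-\alpha\bell_m-\by}.
\]

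Next, squaring and expanding the right-hand side yields
\[
\norm{\by_{m+1}-\by}^2\le\norm{\by_m-\by}^2-2\alpha\inner{\bell_m}{\by_m-\by}+\alpha^2\norm{\bell_m}^2,
\]
which rearranges to
\[
\inner{\bell_m}{\by_m-\by}\le\frac{\norm{\by_m-\by}^2-\norm{\by_{m+1}-\by}^2}{2\alpha}+\frac{\alpha}{2}\norm{\bell_m}^2.
\]
Summing over $m=1,\dots,M$, the distance terms telescope. Dropping the nonpositive term $-\norm{\by_{M+1}-\by}^2/(2\alpha)$ leaves the bound
\[
\sum_{m=1}^M\inner{\bell_m}{\by_m-\by}\le\frac{\norm{\by_1-\by}^2}{2\alpha}+\frac{\alpha}{2}\sum_{m=1}^M\norm{\bell_m}^2.
\]
Finally, $\by_1=\bx_0$ and $\by\in\Ball(\bx_0,r)$ give $\norm{\by_1-\by}^2\le r^2$, which is the claimed inequality.

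There is no real obstacle in this argument. The only point needing care is the identification of the algorithm's update with $\Phi_{\bx_0,r}$. This identification lets us use the Fejér-type inequality of \Cref{lem:fejer-appendix} even though the iterates are never projected onto $\cX\cap\Ball(\bx_0,r)$ itself.
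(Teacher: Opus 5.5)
Your proposal is correct and follows essentially the same route as the paper: identify the update with $\Phi_{\bx_0,r}(\by_m-\alpha\bell_m)$, apply \Cref{lem:fejer-appendix} to get $\norm{\by_{m+1}-\by}\le\norm{\by_m-\alpha\bell_m-\by}$, then expand, telescope, and use $\norm{\by_1-\by}=\norm{\bx_0-\by}\le r$. The only difference is that you spell out the expansion and telescoping that the paper compresses into one sentence, and you also note the locality of the iterates, which this bound does not require.
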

\begin{proof}
The update in \Cref{alg:projected-online-learner} is exactly $\Phi_{\bx_0,r}(\by_m-\alpha\bell_m)$.  Thus, by \Cref{lem:fejer-appendix},
\(
  \norm{\by_{m+1}-\by}^2
  \le
  \norm{\by_m-\alpha\bell_m-\by}^2.
\)
Expanding, rearranging, and summing over $m$ gives
\[
  \sum_{m=1}^M\inner{\bell_m}{\by_m-\by}
  \le
  \frac{\norm{\by_1-\by}^2}{2\alpha}
  +\frac{\alpha}{2}\sum_{m=1}^M\norm{\bell_m}^2.
\]
Since $\by_1=\bx_0$ and $\by\in \cX\cap\Ball(\bx_0,r)$,
$\norm{\by_1-\by}\le r$.
\end{proof}

\begin{proposition}
\label{prop:projection-base}
Let $r>0$ and $\eps_0>0$.
Run \Cref{alg:conversion} with $M\geq\left(\frac{16G}{\eps_0}\right)^2
\,,\,
  K\geq\frac{8\Delta}{r\eps_0}$
  and \(\cA\) given by
\Cref{alg:projected-online-learner} with step size
$\alpha=r/(G\sqrt M)$. Its outputs satisfy
\begin{equation}
  \E\left[\norm{\Gmap_{r/\epsilon_0,\cX}(\bx_I,\frac1M\sum_{m=1}^M\nabla f(\bz_{I,m}))}\right]\le\eps_0\,.
  \label{eq:projection-base-conclusion}
\end{equation}
\end{proposition}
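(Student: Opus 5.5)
We combine \Cref{thm:generic-conversion}, \Cref{lem:projected-regret-appendix} and \Cref{lem:local-linearization-mapping}. Write $\bar\bnabla:=\frac1M\sum_{m}\nabla f(\bz_{I,m})$, $\eta:=r/\eps_0$ and $Q:=\max_{\by\in\cX\cap\Ball(\bx_I,r)}\inner{\bar\bnabla}{\bx_I-\by}$.

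\textbf{Step 1 (bounding $\E[Q]$).} For \Cref{alg:projected-online-learner} initialized at $\bx$, the natural choice is $C_\cA(\bx)=\cX\cap\Ball(\bx,r)$. This set is compact and convex, it contains $\bx$, and by \Cref{lem:fejer-appendix} it contains all predictions, so $\cA$ is $r$-local. By \Cref{lem:projected-regret-appendix} with $\alpha=r/(G\sqrt M)$,
\[
\E[\Reg_\cA]\le \frac{rG\sqrt M}{2}+\frac{r}{2G\sqrt M}\sum_m\E\norm{\bg_{k,m}}^2\le rG\sqrt M .
\]
Substituting this into \eqref{eq:generic-conversion} gives
\[
\E[Q]\le \frac{\Delta}{K}+\frac{2rG}{\sqrt M}\le \frac{r\eps_0}{8}+\frac{r\eps_0}{8}=\frac{r\eps_0}{4}.
\]

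\textbf{Step 2 (converting $Q$ into a mapping bound).} Let $X:=\norm{\Gmap_{\eta,\cX}(\bx_I,\bar\bnabla)}$. Then \Cref{lem:local-linearization-mapping} gives
\[
Q\ge \min\{\eta X^2,\,rX\}=r\min\{X^2/\eps_0,\,X\}.
\]
Set $\phi(t):=\min\{t^2/\eps_0,t\}$. This function is non-decreasing, and $\phi(t)\ge t-\eps_0/4$ for all $t\ge0$. Indeed, for $t\le\eps_0$ we have $t^2/\eps_0-t+\eps_0/4=(t-\eps_0/2)^2/\eps_0\ge0$, and for $t\ge\eps_0$ we have $\phi(t)=t$. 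Hence $X\le \phi(X)+\eps_0/4\le Q/r+\eps_0/4$. Taking expectations,
\[
\E[X]\le \frac{\E[Q]}{r}+\frac{\eps_0}{4}\le \frac{\eps_0}{4}+\frac{\eps_0}{4}\le\eps_0 .
\]

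\textbf{Main obstacle.} The difficulty is the nonlinearity in Step 2. \Cref{lem:local-linearization-mapping} gives only the quadratic regime $X^2$ for small $X$, so we cannot divide through directly, and Jensen's inequality runs the wrong way for the $\min$. The plan handles this with the pointwise affine minorant $\phi(t)\ge t-\eps_0/4$ of the convex-then-linear function $\phi$, which avoids any distributional argument. If the constants turn out tight, an alternative is the bound $\phi(t)\ge t-\eps_0/4$ combined with Markov's inequality, but the affine bound should already give slack. Step 1 also needs two routine checks: the regret bound must be taken in conditional expectation with respect to the stochastic losses, and the expectation must be exchanged with the maximum over the random set. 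Both are routine given that \Cref{thm:generic-conversion} is already stated in expectation.
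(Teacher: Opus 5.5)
Your proof is correct and follows the paper's route. The OGD regret bound $rG\sqrt M$ plugged into \Cref{thm:generic-conversion} gives $\E[Q]\le r\eps_0/4$, and the case analysis of \Cref{lem:local-linearization-mapping} (which the paper redoes inline) converts this into a mapping bound.

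The only difference is the final step. You use the tangent-line minorant $\min\{t^2/\eps_0,t\}\ge t-\eps_0/4$ to get a bound linear in $Q$. The paper instead inverts the minimum into $X\le\sqrt{\eps_0 Q/r}+Q/r$ and applies Jensen to the concave square root, so Jensen does not actually run the wrong way once the minimum is inverted. Your version gives the slightly better constant $\eps_0/2$ instead of $3\eps_0/4$.
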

\begin{proof}
For each epoch $k$, condition on the anchor $\bx_k$ and apply
\Cref{lem:projected-regret-appendix} with $\bx=\bx_k$ and
$\bell_m=\bg_{k,m}$. Taking expectations and using $\E\norm{\bg_{k,m}}^2\le G^2$ gives
\[
  \E\left[
    \max_{\by\in \cX\cap\Ball(\bx_k,r)}
    \sum_{m=1}^M
    \inner{\bg_{k,m}}{\by_{k,m}-\by}
  \right]
  \le
  \frac{r^2}{2\alpha}
  +\frac{\alpha MG^2}{2}.
\]
With $\alpha=r/(G\sqrt M)$, the RHS is $rG\sqrt M$.
Write \(\bnabla_I:=
  \frac1M\sum_{m=1}^M\nabla f(\bz_{I,m})\), then \Cref{thm:generic-conversion} gives
\[
  \E\left[
    \max_{\by\in \cX\cap\Ball(\bx_I,r)}
    \inner{\bnabla_I}{\bx_I-\by}
  \right]
    \le
  \frac{\Delta}{K}
  +\frac{2rG}{\sqrt M}
  \le
  \frac{r\eps_0}{4},
\]
where the last inequality uses the choices of $K$ and $M$. Let
\(
  \bp:=\proj_{\cX}(\bx_I-\frac{r}{\eps_0}\bnabla_I).
\)
By \Cref{fact:projection},
\[
  \inner{
    \bx_I-\frac{r}{\eps_0}\bnabla_I-\bp
  }{
    \bx_I-\bp
  }
  \le 0,
\]
and therefore
\[
  \inner{\bnabla_I}{\bx_I-\bp}
  \ge
  \frac{\eps_0}{r}\norm{\bx_I-\bp}^2
  =
  \frac{r}{\eps_0}
  \norm{\Gmap_{r/\eps_0,\cX}(\bx_I,\bnabla_I)}^2.
\]
If $\norm{\bx_I-\bp}\le r$, then
$\bp\in \cX\cap\Ball(\bx_I,r)$ and thus
\[
  \max_{\by\in \cX\cap\Ball(\bx_I,r)}
  \inner{\bnabla_I}{\bx_I-\by}
  \ge
  \frac{r}{\eps_0}
  \norm{\Gmap_{r/\eps_0,\cX}(\bx_I,\bnabla_I)}^2.
\]
Otherwise,
\[
  \by:=
  \bx_I+
  r\frac{\bp-\bx_I}{\norm{\bp-\bx_I}}
  \in \cX\cap\Ball(\bx_I,r),
\]
since $\bp,\bx_I\in\cX$ and
$r/\norm{\bp-\bx_I}\in(0,1)$. Therefore
\[
  \max_{\by\in \cX\cap\Ball(\bx_I,r)}
  \inner{\bnabla_I}{\bx_I-\by}
  \ge
  \frac{r}{\norm{\bp-\bx_I}}
  \inner{\bnabla_I}{\bx_I-\bp}
  \ge
  r\norm{\Gmap_{r/\eps_0,\cX}(\bx_I,\bnabla_I)}.
\]
So in either case,
\[
  \norm{\Gmap_{r/\eps_0,\cX}(\bx_I,\bnabla_I)}
  \le
  \sqrt{
    \frac{\eps_0}{r}
    \max_{\by\in \cX\cap\Ball(\bx_I,r)}
    \inner{\bnabla_I}{\bx_I-\by}
  }
  +
  \frac1r
  \max_{\by\in \cX\cap\Ball(\bx_I,r)}
  \inner{\bnabla_I}{\bx_I-\by}.
\]
Taking expectations and applying Jensen's inequality gives
\[
\begin{aligned}
  \E\norm{\Gmap_{r/\eps_0,\cX}(\bx_I,\bnabla_I)}
  \le
  \sqrt{\frac{\eps_0}{r}\frac{r\eps_0}{4}}
  +\frac{1}{r}\frac{r\eps_0}{4}
  =\frac34\eps_0
  \le\eps_0\,.
\end{aligned}
\]
\end{proof}

\begin{lemma}
\label{lem:projected-postprocessing}
Let $\cX$ be a nonempty closed convex set and let $\bx\in\cX$, $\bg\in\partial_r f(\bx)$, $\bv\in\R^d$. Denote
\(
  \bp:=\proj_{\cX}(\bx-\eta\bv).
\)
Then
\[
  \bg\in\partial_{r+\norm{\bp-\bx}}f(\bp),
  \qquad
  \dist\bigl(-\bg,\mathcal{N}_\cX(\bp)\bigr)
  \le
  \norm{\Gmap_{\eta,\cX}(\bx,\bv)}
  +\norm{\bg-\bv}.
\]
\end{lemma}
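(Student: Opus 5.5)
The two claims are essentially independent: the first is a containment of Goldstein subdifferentials, and the second is a perturbation estimate for the normal-cone distance at the projected point $\bp$.

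For the first claim, we use monotonicity of the Goldstein subdifferential in its center and radius. Any $\by\in\Ball(\bx,r)$ satisfies $\norm{\by-\bp}\le\norm{\by-\bx}+\norm{\bx-\bp}\le r+\norm{\bp-\bx}$, so $\Ball(\bx,r)\subseteq\Ball(\bp,r+\norm{\bp-\bx})$. Hence
\[
\bigcup_{\by\in\Ball(\bx,r)}\partial f(\by)\subseteq\bigcup_{\by\in\Ball(\bp,r+\norm{\bp-\bx})}\partial f(\by).
\]
Taking convex hulls gives $\partial_r f(\bx)\subseteq\partial_{r+\norm{\bp-\bx}}f(\bp)$, and in particular $\bg$ lies in the latter set.

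For the second claim, we exhibit an explicit element of $\mathcal N_\cX(\bp)$ close to $-\bg$. By \Cref{fact:projection}, $\bx-\eta\bv-\bp\in\mathcal N_\cX(\bp)$. Since $\mathcal N_\cX(\bp)$ is a cone and $\eta>0$,
\[
\bn:=\frac{1}{\eta}(\bx-\eta\bv-\bp)=\Gmap_{\eta,\cX}(\bx,\bv)-\bv\in\mathcal N_\cX(\bp).
\]
Therefore
\[
\dist\bigl(-\bg,\mathcal N_\cX(\bp)\bigr)\le\norm{-\bg-\bn}=\norm{\bv-\bg-\Gmap_{\eta,\cX}(\bx,\bv)}\le\norm{\Gmap_{\eta,\cX}(\bx,\bv)}+\norm{\bg-\bv},
\]
where the last step is the triangle inequality.

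The only point needing care is that the bound is evaluated at $\bp$ rather than at $\bx$. The vector $\bx-\eta\bv-\bp$ lies in the normal cone at $\bp$, not at $\bx$, and this is exactly why the lemma moves the base point to $\bp$. The first claim is what compensates for this move, by keeping $\bg$ inside a Goldstein subdifferential centered at $\bp$ with the slightly enlarged radius $r+\norm{\bp-\bx}$. Neither step should pose a real obstacle.
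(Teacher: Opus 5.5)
Your proposal is correct and follows essentially the same route as the paper. The first claim comes from the inclusion $\Ball(\bx,r)\subseteq\Ball(\bp,r+\norm{\bp-\bx})$, and the second from $\Gmap_{\eta,\cX}(\bx,\bv)-\bv\in\mathcal{N}_\cX(\bp)$ (via \Cref{fact:projection} and the cone property) followed by the triangle inequality.
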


\begin{proof}
Since $\bg\in\partial_r f(\bx)$ and
\[
  \Ball(\bx,r)
  \subseteq
  \Ball\bigl(\bp,r+\norm{\bp-\bx}\bigr),
\]
the first claim follows. By \Cref{fact:projection},
\(
  \bx-\eta\bv-\bp\in \mathcal{N}_\cX(\bp).
\)
Since $\mathcal{N}_\cX(\bp)$ is a cone,
\[
  \Gmap_{\eta,\cX}(\bx,\bv)-\bv
  =
  \frac{\bx-\bp}{\eta}-\bv
  \in \mathcal{N}_\cX(\bp).
\]
Therefore,
\[
  \dist\bigl(-\bg,\mathcal{N}_\cX(\bp)\bigr)
  \le
  \norm{
    -\bg-\bigl(\Gmap_{\eta,\cX}(\bx,\bv)-\bv\bigr)
  }
  \le
  \norm{\Gmap_{\eta,\cX}(\bx,\bv)}
  +\norm{\bg-\bv}.
\]
\end{proof}

\begin{proof}[Proof of \Cref{thm:projection-expectation}]
For each trial $n$, denote
\(
  \bnabla_n:=\frac1M\sum_{m=1}^M\nabla f(\bz_{n,m})
\).
By unbiasedness, independence, and the second-moment bound, 
\(
  \E \norm{\bnabla_n-\widehat{\bg}_n}^2
  \le \frac{G^2}{M},
\)
so
\(
  \E \norm{\bnabla_n-\widehat{\bg}_n}
  \le\frac{G}{\sqrt M}\le\frac{\eps}{96}.
\)
By \Cref{prop:projection-base} with $\eps_0=\eps/6$ and
\Cref{lem:prelim-stability} we have
\[
  \E \norm{\Gmap_{6r/\eps,\cX}(\bx_n,\widehat{\bg}_n)}
  \le
  \E\norm{\Gmap_{6r/\eps,\cX}(\bx_n,\bnabla_n)}
  +\E \norm{\bnabla_n-\widehat{\bg}_n}
  \le \frac{\eps}{3},
\]
so Markov's inequality gives
\[
  \Prb\left(
    \norm{\Gmap_{6r/\eps,\cX}(\bx_n,\widehat{\bg}_n)}
    \le\frac{2\eps}{3}
  \right)
  \ge\frac12.
\]
By definition of $\Gmap_{6r/\eps,\cX}$,
\[
  \norm{\bp_n-\bx_n}
  =
  \frac{6r}{\eps}
  \norm{\Gmap_{6r/\eps,\cX}(\bx_n,\widehat{\bg}_n)},
\]
so each trial is accepted with probability at least $1/2$. Let $E$ denote the event that at least one trial is accepted. Since the trials are i.i.d.,
$$\Prb(E^c)
  \le 2^{-N}
  \le \frac{\eps}{4G}.
$$
On $E$,
$n_\star$ is the first accepted trial. As
$\bz_{n_\star,1},\dots,\bz_{n_\star,M}\in\Ball(\bx_{n_\star},r)$, we have
\(
  \bnabla_{n_\star}\in\partial_r f(\bx_{n_\star}).
\)
Since the trial is accepted,
$\norm{\bx^\mathrm{out}-\bx_{n_\star}}\le4r$, and hence
\(
  \bnabla_{n_\star}\in\partial_{5r}f(\bx^\mathrm{out})
  =\partial_\delta f(\bx^\mathrm{out})
\)
by the choice of $r$. Therefore, \Cref{lem:projected-postprocessing} gives
\[
  \Res_\delta^{f,\cX}(\bx^\mathrm{out})
  \leq\dist(-\bnabla_{n_\star},\mathcal{N}_\cX(\bx^\mathrm{out}))
  \le
  \norm{\Gmap_{6r/\eps,\cX}(\bx_{n_\star},\widehat{\bg}_{n_\star})}
  +\norm{\bnabla_{n_\star}-\widehat{\bg}_{n_\star}}
  \le
  \frac{2\eps}{3}+\norm{\bnabla_{n_\star}-\widehat{\bg}_{n_\star}}.
\]
Moreover,
\[
  \E\!\left[
    \norm{\bnabla_{n_\star}-\widehat{\bg}_{n_\star}}
    \,\middle|\,E
  \right]
  =
  \E\!\left[
    \norm{\bnabla_n-\widehat{\bg}_n}
    \,\middle|\,
    \norm{\bp_n-\bx_n}\le4r
  \right]
  \le
  \frac{\E\norm{\bnabla_n-\widehat{\bg}_n}}
       {\Prb(\norm{\bp_n-\bx_n}\le4r)}
  \le \frac{\eps}{48}.
\]
Thus
\[
  \E[\Res_\delta^{f,\cX}(\bx^\mathrm{out})\mid E]
  \le\frac{2\eps}{3}+\frac{\eps}{48}
  =\frac{11\eps}{16}.
\]
On $E^c$, $\bx^\mathrm{out}=\bp_N\in\cX$, and
\(
  \Res_\delta^{f,\cX}(\bx^\mathrm{out})\le G
\).
Therefore
\[
  \E\Res_\delta^{f,\cX}(\bx^\mathrm{out})
  \le
  \frac{11\eps}{16}
  +G\Prb(E^c)
  \le
  \frac{11\eps}{16}+\frac{\eps}{4}
  \le\eps.
\]
\end{proof}
\subsection{Projection-Free Application}
\label{app:lmo-proofs}

\begin{lemma}
\label{lem:homothetic-main}
For any $0<\delta\le D$ and $\bx\in\cX$,
\[
  \left(1-\frac{\delta}{D}\right)\bx+\frac{\delta}{D}\cX
  \subseteq
  \cX\cap\Ball(\bx,\delta).
\]
Moreover, for every $\bg\in\R^d$,
\[
  \max_{\by\in (1-\delta/D)\bx+(\delta/D)\cX}
  \inner{\bg}{\bx-\by}
  =
  \frac{\delta}{D}
  \max_{\bu\in\cX}\inner{\bg}{\bx-\bu}
\]
and thus
\[
  \FW_\delta^{f,(1-\delta/D)\bx+(\delta/D)\cX}(\bx)
  =
  \frac{\delta}{D}\FW_\delta^{f,\cX}(\bx).
\]
\end{lemma}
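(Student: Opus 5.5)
The plan is to prove the three claims in order, each by a direct computation from the definitions; no earlier machinery beyond convexity of $\cX$ and the definition of $\FW_\delta$ is needed. Write $\lambda:=\delta/D\in(0,1]$ and $S:=(1-\lambda)\bx+\lambda\cX$.

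For the inclusion, take an arbitrary point $\by=(1-\lambda)\bx+\lambda\bu$ with $\bu\in\cX$. Since $\bx,\bu\in\cX$ and $\lambda\in(0,1]$, convexity of $\cX$ gives $\by\in\cX$. For the ball, compute $\norm{\by-\bx}=\lambda\norm{\bu-\bx}\le\lambda D=\delta$, using that $D$ is the diameter of $\cX$ and both $\bu,\bx\in\cX$. Hence $\by\in\cX\cap\Ball(\bx,\delta)$.

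For the identity of maxima, use the same parametrization: $\bx-\by=\lambda(\bx-\bu)$. Therefore $\inner{\bg}{\bx-\by}=\lambda\inner{\bg}{\bx-\bu}$. The map $\bu\mapsto(1-\lambda)\bx+\lambda\bu$ is a bijection from $\cX$ onto $S$ (affine, with $\lambda>0$). Consequently the maximum over $\by\in S$ equals $\lambda\max_{\bu\in\cX}\inner{\bg}{\bx-\bu}$. Both maxima are attained, since $\cX$ and hence $S$ are compact.

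For the final claim, apply the previous identity for every $\bg\in\partial_\delta f(\bx)$ and then minimize over $\bg$. Because $\lambda>0$ is a constant, it factors out of the minimum, giving $\FW_\delta^{f,S}(\bx)=\lambda\FW_\delta^{f,\cX}(\bx)$. One point needs checking: the Goldstein subdifferential $\partial_\delta f(\bx)$ depends only on $f$, $\bx$ and $\delta$, not on the constraint set. So the same set of $\bg$ is minimized over on both sides, and the minimum is attained because $\partial_\delta f(\bx)$ is compact for Lipschitz $f$.

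I do not expect a real obstacle. The only care needed is to keep the scaling $\lambda>0$ so the bijection and the factoring through $\min$ and $\max$ are valid, and to use the hypothesis $\delta\le D$ to ensure $\lambda\le1$, which is exactly what the convexity step requires.
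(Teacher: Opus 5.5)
Your proposal is correct and follows essentially the same argument as the paper. Both write $\by=(1-\delta/D)\bx+(\delta/D)\bu$, get the inclusion from convexity and $\norm{\by-\bx}=(\delta/D)\norm{\bu-\bx}\le\delta$, use $\bx-\by=(\delta/D)(\bx-\bu)$ for the identity of maxima, and then minimize over $\bg\in\partial_\delta f(\bx)$.
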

\begin{proof}
Write
\(
  \by
  =
  \left(1-\frac{\delta}{D}\right)\bx
  +\frac{\delta}{D}\bu
\)
for some $\bu\in\cX$. By convexity, $\by\in\cX$, and
\[
  \norm{\by-\bx}
  =
  \frac{\delta}{D}\norm{\bu-\bx}
  \le\delta.
\]
Also,
\(
  \bx-\by
  =
  \frac{\delta}{D}(\bx-\bu),
\)
which gives the second claim. Taking the minimum over $\bg\in\partial_\delta f(\bx)$ gives the last claim.
\end{proof}

\begin{proposition}
\label{prop:lmo-online-main}
Let $\cA_F$ be \Cref{alg:lmo-ftrl} with input $\bx_0$ and \(\alpha=\delta/(G\sqrt M),\,
  N=4M,\) and suppose 
\(
  \E\norm{\bell_m}^2\le G^2
\) for every $m\in[M]$. Then 
\[\E[\Reg_{\cA_F}(\bx_0;\bell_1,\dots,\bell_M)]
  \le 2\delta G\sqrt M.
\]
\end{proposition}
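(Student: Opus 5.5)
The plan is to view \Cref{alg:lmo-ftrl} as an inexact implementation of follow-the-regularized-leader (FTRL) over the shrunken set
\[
C:=\left(1-\tfrac{\delta}{D}\right)\bx_0+\tfrac{\delta}{D}\cX .
\]
By \Cref{lem:homothetic-main}, $C\subseteq\cX\cap\Ball(\bx_0,\delta)$, and $C$ contains $\bx_0$. It is compact and convex, with diameter at most $\delta$. Let $C_{\cA_F}(\bx_0):=C$. Every $\bw_n$ is a convex combination of $\bx_0$ and points of the form $(1-\delta/D)\bx_0+(\delta/D)\bv_n\in C$, so all predictions $\by_m$ lie in $C$.

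\textbf{Step 1: the inner loop is Frank-Wolfe on $C$.} For round $m$, define
\[
\Psi_m(\bw):=\Big\langle\sum_{j<m}\bell_j,\bw\Big\rangle+\frac{1}{2\alpha}\norm{\bw-\bx_0}^2 .
\]
This function is $\alpha^{-1}$-smooth and $\alpha^{-1}$-strongly convex. Its gradient at $\bw_n$ is exactly the vector passed to $\LMO_\cX$. Since $C$ is an affine image of $\cX$ with positive scaling, the minimizer of a linear function over $C$ is $(1-\delta/D)\bx_0+(\delta/D)\bv_n$. Hence the update is standard Frank-Wolfe on $\Psi_m$ over $C$ with step $2/(n+2)$. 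The classical rate of \citet{jaggi2013} then gives
\[
\Psi_m(\by_m)-\Psi_m(\by_m^\star)\le\frac{2\delta^2}{\alpha(N+2)},
\qquad \by_m^\star:=\argmin_{C}\Psi_m .
\]
Strong convexity then yields
\[
\norm{\by_m-\by_m^\star}^2\le 2\alpha\bigl(\Psi_m(\by_m)-\Psi_m(\by_m^\star)\bigr)\le\frac{4\delta^2}{N+2}.
\]
With $N=4M$ this gives $\norm{\by_m-\by_m^\star}\le\delta/\sqrt M$.

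\textbf{Step 2: regret split.} For any $\by\in C$, I decompose
\[
\sum_m\inner{\bell_m}{\by_m-\by}=\sum_m\inner{\bell_m}{\by_m^\star-\by}+\sum_m\inner{\bell_m}{\by_m-\by_m^\star}.
\]
The first sum is the regret of exact FTRL over $C$ with an $\alpha^{-1}$-strongly convex regularizer. The standard bound (e.g.\ \citealp[Cor.~7.9]{orabona2019modern}) is
\[
\frac{\norm{\by-\bx_0}^2}{2\alpha}+\frac{\alpha}{2}\sum_m\norm{\bell_m}^2\le\frac{\delta^2}{2\alpha}+\frac\alpha2\sum_m\norm{\bell_m}^2 .
\]
This bound holds deterministically for every loss sequence, including adaptively chosen ones, so it survives the maximum over $\by$ inside the expectation. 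The second sum is bounded by Cauchy--Schwarz by $\frac{\delta}{\sqrt M}\sum_m\norm{\bell_m}$, also deterministically.

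\textbf{Step 3: expectations and constants.} Taking expectations and using $\E\norm{\bell_m}\le\sqrt{\E\norm{\bell_m}^2}\le G$ gives
\[
\frac{\delta^2}{2\alpha}+\frac{\alpha MG^2}{2}+\delta G\sqrt M .
\]
With $\alpha=\delta/(G\sqrt M)$ this equals $\tfrac12\delta G\sqrt M+\tfrac12\delta G\sqrt M+\delta G\sqrt M=2\delta G\sqrt M$, as claimed.

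The main obstacle is getting the constants to close exactly. The FTRL bound must be the version with $\tfrac{\alpha}{2}\sum\norm{\bell_m}^2$, not $\alpha\sum\norm{\bell_m}^2$. The Frank-Wolfe bound must be stated with curvature constant $\alpha^{-1}\delta^2$, using $\mathrm{diam}(C)\le\delta$ rather than $D$. If the Frank-Wolfe constant comes out worse, I would check the exact form of Jaggi's bound for the $2/(n+2)$ schedule started at $\bw_0=\bx_0\in C$.
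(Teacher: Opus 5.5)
Your proposal is correct and follows the paper's proof essentially step for step. Both arguments run Frank--Wolfe on the quadratic FTRL objective over $C=(1-\delta/D)\bx_0+(\delta/D)\cX$ with Jaggi's $2\delta^2/(\alpha(N+2))$ rate, use strong convexity to get $\norm{\by_m-\by_m^\star}\le 2\delta/\sqrt{N+2}$, apply the exact quadratic-FTRL bound $\frac{\delta^2}{2\alpha}+\frac{\alpha}{2}\sum_m\norm{\bell_m}^2$, and handle the approximation error by Cauchy--Schwarz, with the constants closing to $2\delta G\sqrt M$ exactly as you computed. Your remark that the FTRL bound holds deterministically for adaptively chosen losses is a point the paper relies on only implicitly.
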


\begin{proof}
The proof combines the
Frank-Wolfe convergence rate \citep{jaggi2013} with the standard
quadratic FTRL regret bound \citep{orabona2019modern}, as we detail below.
Write
\[
  C:=\left(1-\frac{\delta}{D}\right)\bx_0+\frac{\delta}{D}\cX,
  \qquad
  Q_m(\by)
  :=
  \inner{\sum_{j=1}^{m-1}\bell_j}{\by}
  +\frac{1}{2\alpha}\norm{\by-\bx_0}^2,
\]
and let $\by_m^\star\in\argmin_{\by\in C}Q_m(\by)$.
By \Cref{lem:homothetic-main},
\[
  \left(1-\frac{\delta}{D}\right)\bx_0
  +\frac{\delta}{D}\bv_n
  \in
  \argmin_{\bs\in C}
  \inner{\nabla Q_m(\bw_n)}{\bs}\,,
\]
and therefore $\bw_n$ are the iterates of Frank-Wolfe (see, e.g., \citep[Algorithm~1]{jaggi2013}) on $Q_m$ over $C$. Since $Q_m$ is $\alpha^{-1}$-smooth and $C$ has diameter at most
$\delta$, the standard Frank-Wolfe guarantee
\citep[Theorem~1]{jaggi2013} gives
\[
  Q_m(\by_m)-Q_m(\by_m^\star)
  \le
  \frac{2\delta^2}{\alpha(N+2)}.
\]
Moreover, $Q_m$ is $\alpha^{-1}$-strongly convex so the LHS is lower bounded by 
\(
  \frac{1}{2\alpha}\norm{\by_m-\by_m^\star}^2\,,
\)
and thus
\[
  \norm{\by_m-\by_m^\star}
  \le \frac{2\delta}{\sqrt{N+2}}.
\]
The points $\by_m^\star$ are the FTRL iterates for the linear losses
$\by\mapsto\inner{\bell_m}{\by}$ with regularizer
$\psi(\by)=\frac12\norm{\by-\bx_0}^2$ and constant regularization coefficient
$\alpha$. Since $\psi$ is $1$-strongly convex and
$\min_{\by\in C}\psi(\by)=0$, the standard FTRL bound
\citep[Corollary~7.6]{orabona2019modern} gives, for every $\by\in C$,
\[
  \sum_{m=1}^M
  \inner{\bell_m}{\by_m^\star-\by}
  \le
  \frac{\norm{\by-\bx_0}^2}{2\alpha}
  +\frac{\alpha}{2}\sum_{m=1}^M\norm{\bell_m}^2.
\]
For every $\by\in C$, by using Cauchy--Schwarz, the fact that $\norm{\by-\bx_0}\le\delta$ and the guarantees above we arrive at
\[
\begin{aligned}
\sum_{m=1}^M\inner{\bell_m}{\by_m-\by}
&=
\sum_{m=1}^M\inner{\bell_m}{\by_m^\star-\by}
+\sum_{m=1}^M\inner{\bell_m}{\by_m-\by_m^\star}\\
&\le
\frac{\delta^2}{2\alpha}
+\frac{\alpha}{2}\sum_{m=1}^M\norm{\bell_m}^2
+\frac{2\delta}{\sqrt{N+2}}\sum_{m=1}^M\norm{\bell_m},
\end{aligned}
\]
Taking the maximum over
$\by\in C$ gives a bound on $\Reg_{\cA_F}(\bx_0;\bell_1,\dots,\bell_M)$, and then taking expectations and using
$\E\norm{\bell_m}\le(\E\norm{\bell_m}^2)^{1/2}\le G$,
\[
  \E[\Reg_{\cA_F}(\bx_0;\bell_1,\dots,\bell_M)]
  \le
  \frac{\delta^2}{2\alpha}
  +\frac{\alpha MG^2}{2}
  +\frac{2\delta MG}{\sqrt{N+2}}
  \le 2\delta G\sqrt M,
\]
where the last inequality uses
$\alpha=\delta/(G\sqrt M)$ and $N=4M$.
\end{proof}

\begin{proof}[Proof of \Cref{thm:lmo-main}]
Let $\bx^\mathrm{out}$ denote the first component returned by
\Cref{alg:conversion}. Then, by \Cref{thm:generic-conversion,prop:lmo-online-main},
\[
  \E\!\left[
    \FW_\delta^{f,(1-\delta/D)\bx^\mathrm{out}+(\delta/D)\cX}(\bx^\mathrm{out})
  \right]
  \le
  \frac{\Delta}{K}+\frac{3\delta G}{\sqrt M}.
\]
We use \Cref{lem:homothetic-main} to see that \Cref{alg:lmo-ftrl} is a $\delta$-local learner over $(1-\delta/D)\bx^\mathrm{out}+(\delta/D)\cX$, and use \Cref{lem:homothetic-main} again to write
\[
  \frac{\delta}{D}\E[\FW_\delta^{f,\cX}(\bx^\mathrm{out})]
  \le
  \frac{\Delta}{K}+\frac{3\delta G}{\sqrt M}
  \le \frac{\delta\eps}{D},
\]
which proves the claim.
\end{proof}

\subsection{Zeroth-Order Optimization}
\label{app:zero-order-proofs}

Our zeroth-order results rely on the following:

\begin{lemma}[\citealp{kornowski2024}]
\label{lem:smoothing-goldstein-inclusion}
For every \(\rho,\nu\ge 0\), we have
\(
    \partial_\nu f_\rho(\bx)
    \subseteq
    \partial_{\rho+\nu} f(\bx).
\)
Consequently, if \(\rho+\nu\le \delta\), then every \(\bg\in\partial_\nu f_\rho(\bx)\) is also in \(\partial_\delta f(\bx)\).
\end{lemma}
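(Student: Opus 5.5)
The plan is to prove the inclusion $\partial_\nu f_\rho(\bx)\subseteq\partial_{\rho+\nu} f(\bx)$ by working with gradients of $f_\rho$ and passing to convex hulls at the end. Since $f$ is assumed differentiable and Lipschitz in the relevant setting, $f_\rho$ is continuously differentiable. It is then enough to show that for every $\by\in\Ball(\bx,\nu)$ we have $\nabla f_\rho(\by)\in\partial_{\rho+\nu}f(\bx)$. Indeed, $\partial f_\rho(\by)=\{\nabla f_\rho(\by)\}$ for continuously differentiable $f_\rho$, and $\partial_{\rho+\nu}f(\bx)$ is convex by definition. So if every such gradient lies in it, the convex hull over $\by\in\Ball(\bx,\nu)$, which is $\partial_\nu f_\rho(\bx)$, also lies in it.

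For the pointwise claim, I would differentiate under the expectation. Because $f$ is Lipschitz, Rademacher's theorem and dominated convergence give
\[
\nabla f_\rho(\by)=\E_{\bz\sim\Unif(\Ball(\mathbf 0,\rho))}\bigl[\nabla f(\by+\bz)\bigr],
\]
where $\nabla f(\by+\bz)$ exists for almost every $\bz$. Each point $\by+\bz$ lies in $\Ball(\by,\rho)\subseteq\Ball(\bx,\rho+\nu)$. Wherever $f$ is differentiable, $\nabla f(\by+\bz)\in\partial f(\by+\bz)\subseteq\partial_{\rho+\nu}f(\bx)$. So $\nabla f_\rho(\by)$ is an expectation of a bounded random vector taking values almost surely in the set $S:=\partial_{\rho+\nu}f(\bx)$.

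The main obstacle is that $S$ is defined as a convex hull of a union, not as a closed convex hull. An expectation of an $S$-valued random vector lies in the closed convex hull of $S$, and I must check that this is $S$ itself. To handle this, I would show that $S$ is compact. The union $\bigcup_{\by'\in\Ball(\bx,\rho+\nu)}\partial f(\by')$ is bounded by the Lipschitz constant. It is also closed, because the Clarke subdifferential has a closed graph (outer semicontinuity) and the ball is compact. By Carathéodory's theorem, the convex hull of a compact set in $\R^d$ is compact. Hence $S$ is closed and convex, and it contains the expectation. The standard way to see the last step is that if the mean were outside $S$, a separating hyperplane would produce a linear functional whose expectation contradicts the almost-sure inclusion.

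The "consequently" part is then immediate from monotonicity of the Goldstein subdifferential in its radius. If $\rho+\nu\le\delta$, then $\Ball(\bx,\rho+\nu)\subseteq\Ball(\bx,\delta)$, so $\partial_{\rho+\nu}f(\bx)\subseteq\partial_\delta f(\bx)$.
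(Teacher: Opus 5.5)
Your proof is correct. The paper gives no proof of this lemma; it quotes it from \citet{kornowski2024}. Your route is the standard proof of that result: reduce $\partial_\nu f_\rho(\bx)$ to the convex hull of the gradients $\nabla f_\rho(\by)$ for $\by\in\Ball(\bx,\nu)$, write each as the average $\E[\nabla f(\by+\bz)]$ of gradients of $f$ at points of $\Ball(\bx,\rho+\nu)$, and use that $\partial_{\rho+\nu}f(\bx)$ is convex and compact. Your closed-graph and Carath\'eodory step supplies exactly the closedness needed to keep the average inside that set.

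Two minor fixes:
\begin{enumerate}
\item $f_\rho\in C^1$ follows from $f$ being Lipschitz together with $\rho>0$ (this is the $O(\sqrt d G/\rho)$-smoothness quoted in the zeroth-order section), not from differentiability of $f$.
\item Split off the case $\rho=0$. There $f_0=f$, and your steps $\partial f_\rho(\by)=\{\nabla f_\rho(\by)\}$ and ``differentiable for a.e.\ $\bz$'' may fail. The claim itself is then just the tautology $\partial_\nu f(\bx)\subseteq\partial_\nu f(\bx)$.
\end{enumerate}
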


The following standard two-point estimator follows from
\citet{shamir2017optimal}, with the explicit constant in the
second-moment bound from \citet{lin2022gradient}.

\begin{lemma}
\label{lem:two-point-estimator}
For every $\rho>0$ and every possibly adaptive query point $\bz$,
if $\xi\sim\Xi$ is a fresh sample and $\widehat{\bg}$ is the output of
\Cref{alg:two-point}, then
\[
    \E[\widehat{\bg}\mid\bz]
    =
    \nabla f_\rho(\bz),
    \qquad
    \E[\|\widehat{\bg}\|^2\mid\bz]
    \le
    16\sqrt{2\pi}\,dG^2.
\]
\end{lemma}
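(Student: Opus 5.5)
The plan is to treat the two claims separately, starting from the standard sphere identity behind the two-point estimator of \Cref{alg:two-point}. For unbiasedness, first condition on $\bz$ and on the fresh sample $\xi$; since $\xi$ is drawn independently of $\bz$, we may treat $\bz$ as fixed. Write $F_\rho(\cdot;\xi)$ for the ball smoothing of $F(\cdot;\xi)$.

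\textbf{Unbiasedness.} Because $\bu$ and $-\bu$ have the same distribution, the symmetric difference reduces to a one-sided one:
\[
\E_{\bu}\Bigl[\tfrac{d}{2\rho}\bigl(F(\bz+\rho\bu;\xi)-F(\bz-\rho\bu;\xi)\bigr)\bu\Bigr]=\tfrac{d}{\rho}\E_{\bu}[F(\bz+\rho\bu;\xi)\bu].
\]
By the divergence theorem (Flaxman et al.), the right-hand side equals $\nabla F_\rho(\bz;\xi)$; this only needs $F(\cdot;\xi)$ to be Lipschitz. It remains to take expectation over $\xi$. Since $\E[L(\xi)]<\infty$, dominated convergence lets us interchange expectation and differentiation, giving $\E_\xi\nabla F_\rho(\bz;\xi)=\nabla f_\rho(\bz)$.

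\textbf{Second moment.} Fix $\xi$ and set $h(\bu):=F(\bz+\rho\bu;\xi)$ on $\Sph^{d-1}$. Then $h$ is $\rho L(\xi)$-Lipschitz, and
\[
\norm{\widehat{\bg}}^2=\tfrac{d^2}{4\rho^2}\bigl(h(\bu)-h(-\bu)\bigr)^2 .
\]
Adding and subtracting the mean $\E h$ and applying $(a-b)^2\le2a^2+2b^2$ together with the symmetry of $\bu$, we get
\[
\E\norm{\widehat{\bg}}^2\le\tfrac{d^2}{\rho^2}\E\bigl(h-\E h\bigr)^2 .
\]
The key input is concentration of Lipschitz functions on the sphere (Lévy / Ledoux): $\Prb(|h-\E h|>t)\le2\exp\bigl(-c\,d\,t^2/(\rho L)^2\bigr)$. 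Integrating this tail bound yields $\E(h-\E h)^2\le C\rho^2L(\xi)^2/d$, and hence $\E[\norm{\widehat{\bg}}^2\mid\xi]\le C'dL(\xi)^2$. Taking expectation over $\xi$ and using $\E L(\xi)^2\le G^2$ gives the $O(dG^2)$ bound.

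\textbf{Main obstacle.} The delicate part is getting the explicit constant $16\sqrt{2\pi}$. I would take it from the precise concentration constant used by \citet{lin2022gradient}, namely $\Prb(|h-\E h|\ge t)\le2\sqrt{2\pi}\,e^{-dt^2/(8\rho^2L^2)}$. Integrating $\int_0^\infty 2t\cdot2\sqrt{2\pi}\,e^{-dt^2/(8\rho^2L^2)}\,dt$ gives $\E(h-\E h)^2\le16\sqrt{2\pi}\,\rho^2L^2/d$. To land on exactly $16\sqrt{2\pi}\,dG^2$, I would bound $(h(\bu)-h(-\bu))^2\le 2(h(\bu)-\E h)^2+2(h(-\bu)-\E h)^2$, which makes the prefactor $\tfrac{d^2}{4\rho^2}\cdot 4=\tfrac{d^2}{\rho^2}$. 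The remaining constant bookkeeping then follows their computation.
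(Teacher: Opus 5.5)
Your proposal is correct and follows the same route the paper relies on. The paper gives no separate proof: it cites \citet{shamir2017optimal} for exactly your argument, namely the sphere identity $\nabla f_\rho(\bz)=\frac{d}{\rho}\E[f(\bz+\rho\bu)\bu]$ for unbiasedness, and symmetrization around $\E h$ plus concentration of the $\rho L(\xi)$-Lipschitz function $h$ on $\Sph^{d-1}$ for the second moment, with the constant taken from \citet{lin2022gradient}. Your bookkeeping reproduces that constant exactly, since $\frac{d^2}{\rho^2}\cdot\frac{16\sqrt{2\pi}\rho^2L(\xi)^2}{d}=16\sqrt{2\pi}\,dL(\xi)^2$, after which $\E L(\xi)^2\le G^2$ gives the stated bound.
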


In this section, when applying \Cref{alg:conversion} to $f_\rho$, we
replace its stochastic-gradient query by a call to \Cref{alg:two-point}
with inputs $\bz_{k,m},\rho,\xi_{k,m}$.

\begin{proof}[Proof of \Cref{thm:two-point-projection}]
Since $|f(\bx)-f_\rho(\bx)|\le \rho G$ for every $\bx$,
$f_\rho(\bx_0)-\inf_{\bx\in\cX}f_\rho(\bx)
\le \Delta+2G\rho\le 2\Delta$.
By \Cref{lem:two-point-estimator}, applying
\Cref{thm:projection-expectation} to $f_\rho$ with
$(G,\delta,\Delta)$ replaced by the parameters
$\bigl(4(2\pi)^{1/4}\sqrt d\,G,\delta/2,2\Delta\bigr)$ gives
\(
    \E\Res_{\delta/2}^{f_\rho,\cX}(\bx^\mathrm{out})
    \le\eps.
\)
By \Cref{lem:smoothing-goldstein-inclusion},
\(
    \partial_{\delta/2}f_\rho(\bx^\mathrm{out})
    \subseteq\partial_{\delta/2+\rho} f(\bx^\mathrm{out})
    \subseteq\partial_\delta f(\bx^\mathrm{out}),
\)
and therefore we have
\(
    \E\Res_\delta^{f,\cX}(\bx^\mathrm{out})\le\eps.
\)
The claimed complexity follows from
\Cref{thm:projection-expectation}.
\end{proof}

\begin{proof}[Proof of \Cref{thm:two-point-lmo}]
Since $|f(\bx)-f_\rho(\bx)|\le \rho G$ for every $\bx$,
$f_\rho(\bx_0)-\inf_{\bx\in\cX}f_\rho(\bx)
\le \Delta+2G\rho\le 2\Delta$.
By \Cref{lem:two-point-estimator}, applying
\Cref{thm:lmo-main} to $f_\rho$ with
$(G,\delta,\Delta)$ replaced by the parameters
$\bigl(4(2\pi)^{1/4}\sqrt d\,G,\delta/2,2\Delta\bigr)$ gives
\(
    \E\FW_{\delta/2}^{f_\rho,\cX}(\bx^\mathrm{out})\le\eps.
\)
By \Cref{lem:smoothing-goldstein-inclusion},
\(
    \partial_{\delta/2}f_\rho(\bx^\mathrm{out})
    \subseteq\partial_{\delta/2+\rho} f(\bx^\mathrm{out})
    \subseteq\partial_\delta f(\bx^\mathrm{out}),
\)
and therefore we have
\(
    \E\FW_\delta^{f,\cX}(\bx^\mathrm{out})\le\eps.
\)
The claimed complexity follows from \Cref{thm:lmo-main}.
\end{proof}
\subsection{Lower Bounds}
\label{app:lower-bounds}

As previously discussed, we formally prove our lower bounds for (possibly-)randomized zero-respecting first-order algorithms, which are defined as follows:

\begin{definition}[{\citealp[Definition~1]{arjevani2023lower}}]
A stochastic first-order algorithm is \emph{zero-respecting} if, for any
oracle and any realization of its randomness, its queries $\bx_t$ satisfy, for every
$t\ge1$,
\[
    \operatorname{supp}(\bx_t)
    \subseteq
    \bigcup_{i<t}\operatorname{supp}(\bg_i)\,,
\]
where $\bg_i$ is the stochastic gradient returned at $\bx_i$ and
$\operatorname{supp}(\bx):=\{j:x_j\ne0\}$.
\end{definition}

For $a\ge0$ and $\by\in\R^d$, we denote
\[
    \prog_a(\by)
    :=
    \max\bigl(\{i\in[d]:|y_i|>a\}\cup\{0\}\bigr)\,.
\]
Next, we define a \emph{probability-$p$ zero-chain}, and restate a few technical results from \citet{arjevani2023lower}.

\begin{definition}[{\citealp[Definition~2]{arjevani2023lower}}]
\label{def:probability-zero-chain}
A stochastic gradient function $g:\R^d\times\Xi\to\R^d$ is a
\emph{probability-$p$ zero-chain} if
\[
    \Pr_\xi\!\left(
        \exists\,\by\in\R^d:
        \prog_0(g(\by,\xi))
        =
        \prog_{1/4}(\by)+1
    \right)
    \le p
\]
and
\[
    \Pr_\xi\!\left(
        \exists\,\by\in\R^d:
        \prog_0(g(\by,\xi))
        >
        \prog_{1/4}(\by)+1
    \right)
    =0\,.
\]
\end{definition}

\begin{lemma}[{\citealp[Lemma~2]{arjevani2023lower}}]
\label{lem:hard-zero-chain}
There are universal constants $\Delta_0,\ell_1,\gamma_\infty>0$ such
that, for every $d\in\N$, there is a differentiable function
$\bar f_d:\R^d\to\R$ satisfying:
\begin{enumerate}
    \item $\bar f_d(\mathbf 0)-\inf_{\by}\bar f_d(\by)\le\Delta_0d$.
    \item The gradient of $\bar f_d$ is $\ell_1$-Lipschitz, i.e.,
    $\norm{\nabla\bar f_d(\by)-\nabla\bar f_d(\by')}
    \le\ell_1\norm{\by-\by'}$ for every $\by,\by'\in\R^d$.
    \item For every $\by\in\R^d$,
    $\norm{\nabla\bar f_d(\by)}_\infty\le\gamma_\infty$.
    \item For every $\by\in\R^d$ with $\prog_1(\by)<d$,
    $\norm{\nabla\bar f_d(\by)}
    \ge\abs{[\nabla\bar f_d(\by)]_{\prog_1(\by)+1}}>1$.
\end{enumerate}
\end{lemma}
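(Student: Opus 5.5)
The plan is to use the standard chain construction of Carmon et al., which is the one underlying \citet{arjevani2023lower}, and to check the four properties directly. Define the two scalar functions
\[
\Psi(t):=\begin{cases}0,&t\le 1/2,\\ \exp\!\bigl(1-\tfrac{1}{(2t-1)^2}\bigr),&t>1/2,\end{cases}
\qquad
\Phi(t):=\sqrt{e}\int_{-\infty}^t e^{-s^2/2}\,ds .
\]
With the convention $x_0:=1$, set
\[
\bar f_d(\by):=-\Psi(1)\Phi(y_1)+\sum_{i=2}^d\Bigl[\Psi(-y_{i-1})\Phi(-y_i)-\Psi(y_{i-1})\Phi(y_i)\Bigr].
\]
The first step is to record the elementary scalar facts, citing Carmon et al.\ where convenient:
\begin{itemize}
\item $\Psi$ and $\Phi$ are $C^\infty$;
\item $0\le\Psi\le e$ and $0\le\Phi\le\sqrt{2\pi e}$;
\item $\Psi',\Phi'\ge0$;
\item $\Psi',\Psi'',\Phi',\Phi''$ are bounded by universal constants;
\item $\Psi(t)=0$ for $t\le1/2$, and $\Psi(1)=1$;
\item $\Phi'(t)=\sqrt e\,e^{-t^2/2}>1$ whenever $|t|<1$.
\end{itemize}

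Properties 1--3 then follow by direct bounding. For property 1, every summand lies in $[-e\sqrt{2\pi e},\,e\sqrt{2\pi e}]$, and $\bar f_d(\mathbf 0)$ is itself a sum of $d$ such bounded terms. Hence $\bar f_d(\mathbf 0)-\inf\bar f_d\le \Delta_0 d$ with $\Delta_0=O(1)$ (Carmon et al.\ get $12$).

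For property 3, $y_i$ appears only in the summands indexed by $i$ and $i+1$. Consequently $\partial_i\bar f_d$ is a sum of at most four products of the type $\Psi\Phi'$ or $\Psi'\Phi$, each bounded by a universal constant, which gives $\gamma_\infty$.

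For property 2, the Hessian is tridiagonal, and each of its entries is a sum of a bounded number of products of the form $\Psi^{(a)}\Phi^{(b)}$ with $a+b=2$, all uniformly bounded. The operator norm of a tridiagonal matrix is at most $3$ times the maximal entry, which yields a universal $\ell_1$.

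The main content is property 4. Let $i:=\prog_1(\by)+1\le d$. By definition of $\prog_1$ we have $|y_i|<1$, and either $i=1$ or $|y_{i-1}|>1$. Differentiating gives
\[
\partial_i\bar f_d(\by)=-\Psi(-y_{i-1})\Phi'(-y_i)-\Psi(y_{i-1})\Phi'(y_i)-\Psi'(-y_i)\Phi(-y_{i+1})-\Psi'(y_i)\Phi(y_{i+1}),
\]
where for $i=1$ the first two terms are replaced by $-\Psi(1)\Phi'(y_1)$, and for $i=d$ the last two terms are absent. All four terms are nonpositive because $\Psi,\Psi',\Phi,\Phi'\ge0$ and $\Phi'$ is even. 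Therefore $|\partial_i\bar f_d|$ is at least the absolute value of either one of the first two terms.

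Since $|y_{i-1}|\ge1$ (or $y_{i-1}=x_0=1$ when $i=1$), one of $\pm y_{i-1}$ is at least $1$. Monotonicity of $\Psi$ on $[1/2,\infty)$ then gives $\Psi(\pm y_{i-1})\ge\Psi(1)=1$, and $\Phi'(\pm y_i)>1$ because $|y_i|<1$. Hence $|\partial_i\bar f_d(\by)|>1$, and the trivial inequality $\norm{\cdot}\ge|\cdot_i|$ finishes the argument.

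The only delicate point is this sign bookkeeping. We must ensure that no cancellation occurs among the terms of $\partial_i\bar f_d$, which is exactly why the construction pairs $\Psi(-\cdot)\Phi(-\cdot)$ with $-\Psi(\cdot)\Phi(\cdot)$. We also need the constants $\Psi(1)=1$ and $\Phi'(t)>1$ for $|t|<1$ to be strict, so I would verify these explicitly.
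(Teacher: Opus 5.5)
Your construction is the Carmon et al.\ chain that underlies the cited result. The paper gives no proof of its own and simply imports \citet[Lemma~2]{arjevani2023lower}. Your treatment of properties 1--3 is fine. The gap is the strict inequality in property 4. Since $\prog_1(\by)=\max(\{j:|y_j|>1\}\cup\{0\})$, at $i=\prog_1(\by)+1$ the definition only gives $|y_i|\le 1$, not $|y_i|<1$. So you only get $\Phi'(\pm y_i)=\sqrt e\,e^{-y_i^2/2}\ge 1$, with equality at $|y_i|=1$. Combined with your non-strict bound $\Psi(|y_{i-1}|)\ge\Psi(1)=1$, the argument yields only $|\partial_i\bar f_d(\by)|\ge 1$. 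The condition $|y_i|<1$ is how the large-gradient property is phrased in Carmon et al.; rephrasing it via $\prog_1$ moves the strictness to the previous coordinate.

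For $i\ge 2$, the repair is to use the strict inequality $|y_{i-1}|>1$, which you already wrote down. Together with strict monotonicity of $\Psi(t)=\exp(1-(2t-1)^{-2})$ on $(1/2,\infty)$, this gives $\Psi(|y_{i-1}|)>\Psi(1)=1$, so the product exceeds $1$.

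For $i=1$ the coefficient is exactly $\Psi(1)=1$. When $|y_1|=1$ the first term equals $-1$, and strictness must come from the term $-\Psi'(\pm y_1)\Phi(\pm y_2)$. This term is strictly negative because $\Psi'(1)=4>0$ and $\Phi>0$ everywhere, but it only exists when $d\ge 2$.

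For $d=1$ your function is $-\Phi(y_1)$. At $y_1=\pm1$ we have $\prog_1(\by)=0<d$, yet the derivative has absolute value exactly $1$, so property 4 genuinely fails for this construction. The simplest uniform fix is to use $2\bar f_d$ instead. This doubles $\Delta_0,\ell_1,\gamma_\infty$, and your non-strict bounds then give $|\partial_i|\ge 2>1$ for every $d$. Alternatively, note that the paper only invokes the lemma with $d\ge 2$, where the repaired argument above suffices.
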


\begin{lemma}[{\citealp[Lemma~8]{arjevani2023lower}}]
\label{lem:stochastic-zero-chain}
Let $d\in\N$, let $\bar f_d$ be as in \Cref{lem:hard-zero-chain}, and let
$p\in(0,1]$. There is a universal constant $\varsigma>0$ and a
differentiable stochastic function $\bar F_{d,p}(\cdot;\xi)$ such that:
\begin{enumerate}
    \item For every $\by\in\R^d$,
    $\E_\xi[\bar F_{d,p}(\by;\xi)]=\bar f_d(\by)$ and
    $\E_\xi[\nabla\bar F_{d,p}(\by;\xi)]=\nabla\bar f_d(\by)$.
    \item For every $\by\in\R^d$,
    $\E_\xi\norm{
        \nabla\bar F_{d,p}(\by;\xi)-\nabla\bar f_d(\by)
    }^2\le\varsigma^2/p$.
    \item The stochastic gradient $\nabla\bar F_{d,p}$ is a
    probability-$p$ zero-chain.
\end{enumerate}
\end{lemma}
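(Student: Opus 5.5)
The plan is to follow the construction of \citet{arjevani2023lower}: take $\xi\sim\mathrm{Bernoulli}(p)$ and randomly ``mask'' the single gradient coordinate that would reveal new progress along the chain. I would first recall the explicit form of $\bar f_d$ from \Cref{lem:hard-zero-chain}. It is the chain function
\[
\bar f_d(\by)=-\Psi(1)\Phi(y_1)+\sum_{i=2}^d\bigl[\Psi(-y_{i-1})\Phi(-y_i)-\Psi(y_{i-1})\Phi(y_i)\bigr],
\]
where $\Psi$ is smooth, vanishes on $(-\infty,1/2]$, and $\Psi,\Phi$ and their derivatives are bounded. The structural facts I need are two. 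First, $\nabla_i\bar f_d(\by)=0$ for every $i>\prog_{1/2}(\by)+1$, since all terms coupling $y_i$ to earlier coordinates carry a factor $\Psi(\pm y_{i-1})$ or $\Psi'(\pm y_{i-1})$ that vanishes when $|y_{i-1}|\le 1/2$. Second, $\norm{\nabla\bar f_d}_\infty\le\gamma_\infty$.

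Next I would define the stochastic gradient coordinatewise by
\[
[\nabla\bar F_{d,p}(\by;\xi)]_i=\nabla_i\bar f_d(\by)\Bigl(1+\mathbb 1\{i>\prog_{1/4}(\by)\}\bigl(\tfrac{\xi}{p}-1\bigr)\Bigr).
\]
To get an honest differentiable function $\bar F_{d,p}(\cdot;\xi)$ whose gradient is this field, I would replace the indicator by a smooth surrogate depending on the preceding coordinates. Concretely, I would multiply the $i$-th chain term by $1+\Theta_i(\by)(\xi/p-1)$, where $\Theta_i$ is a smooth function of $|y_1|,\dots,|y_{i-1}|$ that equals $1$ when all of these are $\le 1/4$ and equals $0$ once some $|y_j|\ge 1/2$. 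This is the soft-indicator device of Arjevani et al.

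Unbiasedness of both the function and its gradient is then immediate, because $\E[\xi/p-1]=0$ and the perturbation is linear in $\xi$.

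For the zero-chain property, I would argue as follows. For $i>\prog_{1/4}(\by)+1$, the chain structure gives $\nabla_i\bar f_d(\by)=0$ (and likewise for the smoothed version), so no coordinate beyond $\prog_{1/4}(\by)+1$ is ever revealed, with probability one. Coordinate $\prog_{1/4}(\by)+1$ is multiplied by $\xi/p$, so it can be nonzero only on the event $\{\xi=1\}$, which has probability $p$. The event is the same for all $\by$, which handles the ``$\exists\,\by$'' quantifier in \Cref{def:probability-zero-chain}.

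For the variance, at most the coordinates $i>\prog_{1/4}(\by)$ with $\nabla_i\bar f_d\neq0$ are perturbed, and by the chain structure this is a single coordinate (or $O(1)$ of them in the smoothed version). Hence
\[
\E\norm{\nabla\bar F_{d,p}(\by;\xi)-\nabla\bar f_d(\by)}^2\le \gamma_\infty^2\,\E\bigl(\tfrac{\xi}{p}-1\bigr)^2=\gamma_\infty^2\,\tfrac{1-p}{p},
\]
up to the constant factor for the extra terms, giving the universal constant $\varsigma$.

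The main obstacle I expect is reconciling the smoothed surrogate with the exact zero-chain property. The derivatives of $\Theta_i$ produce extra gradient contributions in earlier coordinates. These are harmless for the support condition, but they must be bounded uniformly to keep $\varsigma$ universal. I must also check that the thresholds $1/4$ versus $1/2$ leave enough slack that $\Theta_i$ is constant wherever the indicator matters. I would handle this by choosing the transition region of $\Theta_i$ inside $(1/4,1/2)$, where $\Psi$ and $\Psi'$ at the next link vanish, so that the extra terms disappear.
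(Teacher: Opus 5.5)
\textbf{The smoothing step fails because your mask depends on the wrong coordinates.} Your indicator-masked field is exactly the oracle of \citet{arjevani2023lower}, and your zero-chain and variance arguments for it are fine. But that field is not a gradient. The lemma requires $\nabla\bar F_{d,p}(\cdot;\xi)$ itself to be the zero-chain, so everything rests on the smoothing step.

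The condition $i>\prog_{1/4}(\by)$ says $|y_j|\le 1/4$ for all $j\ge i$. It concerns $y_i,\dots,y_d$, not $y_1,\dots,y_{i-1}$. Your $\Theta_i$ is a function of $|y_1|,\dots,|y_{i-1}|$, equal to $1$ when all are $\le 1/4$ and $0$ once one is $\ge 1/2$. Let $T_i$ denote the $i$-th chain term and fix $i\ge 2$:
\begin{itemize}
\item If $|y_{i-1}|>1/2$, then $\Theta_i\equiv 0$ near $\by$, so $T_i$ keeps multiplier $1$.
\item If $|y_{i-1}|\le 1/2$, then $T_i$ and $\nabla T_i$ vanish at $\by$, because $\Psi$ is flat on $(-\infty,1/2]$.
\end{itemize}
Either way $\nabla(T_i\Theta_i)(\by)=0$, so $\nabla\bar F_{d,p}(\by;\xi)$ differs from $\nabla\bar f_d(\by)$ only in its first coordinate. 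For instance, take $|y_k|>1$ and all later coordinates small. Then coordinate $k+1=\prog_{1/4}(\by)+1$ carries a gradient entry of magnitude $>1$ for every $\xi$. So the progress event has probability $1$, not $p$. This is also why the extra terms ``disappear'' in your version: the mask never acts where it matters.

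\textbf{The fix, as in the cited construction.} The mask must be a smooth function of $|y_i|,\dots,|y_d|$ that equals $1$ when $i>\prog_{1/4}(\by)$ and $0$ when $i\le\prog_{1/2}(\by)$. \citet{arjevani2023lower} take
\[
\Theta_i(\by)=\Gamma\bigl(1-\norm{(\Gamma(|y_i|),\dots,\Gamma(|y_d|))}\bigr),
\]
where $\Gamma$ is a smooth step from $0$ on $(-\infty,1/4]$ to $1$ on $[1/2,\infty)$. Every chain term, including $-\Psi(1)\Phi(y_1)$, is multiplied by $1+\Theta_i(\by)(\xi/p-1)$. The frontier coordinate then equals $(\xi/p)\nabla_{\prog_{1/4}(\by)+1}\bar f_d(\by)$, as in your indicator argument.

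However, the extra terms $(\xi/p-1)T_i\nabla\Theta_i$ no longer vanish, contrary to your last paragraph. You must show three things about them:
\begin{itemize}
\item They are supported on coordinates $j\ge i$ with $|y_j|\in(1/4,1/2)$. Hence $j\le\prog_{1/4}(\by)$, so they never expose a new coordinate.
\item Only the single index $i=\prog_{1/2}(\by)+1$ has both $T_i\ne0$ and $\nabla\Theta_i\ne0$.
\item $\norm{\nabla\Theta_i}$ is bounded by a universal constant, however many coordinates lie in the band $(1/4,1/2)$. This is exactly what the Euclidean-norm form buys. Saying there are ``$O(1)$ extra terms'' is not enough, since $\nabla\Theta_i$ can spread over arbitrarily many coordinates.
\end{itemize}
Together with uniform boundedness of $T_i$, these give variance $O\bigl((1-p)/p\bigr)$ with a universal constant, i.e., a universal $\varsigma$.
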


\begin{lemma}[{\citealp[Lemma~1]{arjevani2023lower}}]
\label{lem:zero-chain-progress}
Let $g(\by,\xi)$ be a probability-$p$ zero-chain gradient estimator for
$\bar f:\R^d\to\R$, and let the oracle return
$(\bar f(\by),g(\by,\xi))$. If
$\by_1,\by_2,\ldots\in\R^d$ are the queries of a zero-respecting algorithm
applied to this oracle, then, with probability at least $1-\delta$,
\[
    \prog_0(\by_t)<d
    \qquad
    \text{for every }
    t\le\frac{d-\log(1/\delta)}{2p}\,.
\]
\end{lemma}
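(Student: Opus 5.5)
The plan is to track the largest coordinate index ever revealed by the oracle and to show that this ``frontier'' advances by at most one coordinate per query, and only with conditional probability at most $p$. A Chernoff-type bound then controls the number of advances within the stated horizon. Concretely, for a run of the algorithm I will define
\[
P_t:=\max\Bigl(\{\prog_0(g(\by_i,\xi_i)):i<t\}\cup\{0\}\Bigr),
\]
the largest nonzero coordinate among all stochastic gradients returned before query $t$. The zero-respecting property gives $\operatorname{supp}(\by_t)\subseteq\bigcup_{i<t}\operatorname{supp}(g(\by_i,\xi_i))\subseteq[P_t]$. Hence $\prog_0(\by_t)\le P_t$, and a fortiori $\prog_{1/4}(\by_t)\le P_t$. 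The goal is therefore to show that $P_t<d$ for all $t\le T:=(d-\log(1/\delta))/(2p)$; note that the lemma's $\delta$ is a failure probability, not the Goldstein radius.

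Next I will control the increments of $P_t$. Let $A_t$ be the event that $\xi_t$ lies in the exceptional set of \Cref{def:probability-zero-chain}, namely that there exists some $\by\in\R^d$ with $\prog_0(g(\by,\xi_t))=\prog_{1/4}(\by)+1$. The second condition of the definition holds almost surely for all $\by$ at once, so it applies to the adaptively chosen $\by_t$ as well. It gives $\prog_0(g(\by_t,\xi_t))\le\prog_{1/4}(\by_t)+1\le P_t+1$, so $P_{t+1}-P_t\in\{0,1\}$. Moreover, $P_{t+1}=P_t+1$ forces $\prog_0(g(\by_t,\xi_t))=\prog_{1/4}(\by_t)+1$, and hence forces $A_t$. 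The event $A_t$ depends only on $\xi_t$, which is fresh and independent of the history $\mathcal F_{t-1}$ that determines $\by_t$. The quantifier over all $\by$ in the definition is exactly what neutralizes adaptivity here. Consequently $\Pr(A_t\mid\mathcal F_{t-1})\le p$, and the events $A_t$ are independent across $t$. It follows that $P_{T+1}\le\sum_{t\le T}\mathbbm 1_{A_t}=:B$, where $B$ is a sum of independent Bernoulli variables with means at most $p$.

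Finally, the failure event $\{\exists t\le T:\prog_0(\by_t)\ge d\}$ is contained in $\{B\ge d\}$. I will bound the latter by Markov's inequality on $e^{B}$:
\[
\Pr(B\ge d)\le e^{-d}\prod_{t\le T}(1+p(e-1))\le \exp\bigl((e-1)pT-d\bigr).
\]
Writing $L=\log(1/\delta)$, we have $pT=(d-L)/2$, so the exponent is $\tfrac{e-1}{2}(d-L)-d$. This is at most $-L$ precisely when $(1-\tfrac{e-1}{2})(L-d)\le 0$, i.e.\ when $L\le d$. If instead $L>d$, then $T<0$ and the claim is vacuous. This yields probability at least $1-\delta$.

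The main obstacle I anticipate is the conditioning step: justifying rigorously that the adaptively chosen $\by_t$ cannot raise the chance of progress above $p$. Handling it via the uniform-in-$\by$ event $A_t$ together with the independence of $\xi_t$ from $\mathcal F_{t-1}$, as above, should settle it. The remaining steps are routine.
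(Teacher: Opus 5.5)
Your proof is correct and follows essentially the original argument of \citet[Lemma~1]{arjevani2023lower}, which the paper imports without reproving: you track the highest coordinate revealed by the oracle, use zero-respecting plus the uniform-in-$\by$ exceptional event of the zero-chain definition to show this frontier grows by at most one per query and only with probability at most $p$ despite adaptivity, and finish with an exponential-moment Chernoff bound using $(e-1)/2<1$. Dominating the increments by the i.i.d.\ indicators $\mathbbm 1_{A_t}$ is a slightly cleaner way to handle adaptivity than a conditional-probability (martingale) formulation, but the route is the same.
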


We first rescale these results to the parameters used in our lower bound.

\begin{lemma}
\label{lem:scaled-zero-chain}
There are universal constants $c,C>0$ such that the following holds.
Let $L,\Delta,\sigma,\eta>0$ satisfy
$\eta\le c\min\{\sigma,\sqrt{L\Delta}\}$.
Then there exists a dimension $d\le C L\Delta/\eta^2$ for which there is a
stochastic function $F_0:\R^d\times\Xi\to\R$, differentiable in its first argument, with mean
$f_0(\bx):=\E_\xi[F_0(\bx;\xi)]$, such that $f_0$ is $L$-smooth,
$f_0(\mathbf 0)-\inf_{\bx\in\R^d}f_0(\bx)\le\Delta$, and, for every
$\bx\in\R^d$,
$\norm{\nabla f_0(\bx)}\le C\sqrt{L\Delta}$,
$\E_\xi[\nabla F_0(\bx;\xi)]=\nabla f_0(\bx)$, and
$\E_\xi\norm{\nabla F_0(\bx;\xi)-\nabla f_0(\bx)}^2\le\sigma^2$.
Moreover, any randomized zero-respecting algorithm making
$T\le cL\Delta\sigma^2/\eta^4$ queries and returning $\bx^{\mathrm{out}}$
with
$\operatorname{supp}(\bx^{\mathrm{out}})
\subseteq\bigcup_{t\le T}\operatorname{supp}
(\nabla F_0(\bx_t;\xi_t))$
satisfies
\[
    \Pr\!\left(
        \exists j\in[d]:
        |x_j^{\mathrm{out}}|\le C\frac{\eta}{L},
        \quad
        |[\nabla f_0(\bx^{\mathrm{out}})]_j|\ge\eta
    \right)
    \ge\frac12\,.
\]
\end{lemma}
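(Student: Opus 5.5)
The plan is to rescale the hard instance of \Cref{lem:hard-zero-chain,lem:stochastic-zero-chain}. For parameters $\lambda>0$ and $p\in(0,1]$ to be fixed later, set
\[
F_0(\bx;\xi):=\frac{L\lambda^2}{\ell_1}\bar F_{d,p}(\bx/\lambda;\xi).
\]
Then $f_0(\bx)=\frac{L\lambda^2}{\ell_1}\bar f_d(\bx/\lambda)$ and $\nabla F_0(\bx;\xi)=\frac{L\lambda}{\ell_1}\nabla\bar F_{d,p}(\bx/\lambda;\xi)$. This immediately gives the following properties:
\begin{itemize}
\item unbiasedness holds;
\item $f_0$ is $L$-smooth;
\item $f_0(\mathbf 0)-\inf f_0\le \frac{L\lambda^2}{\ell_1}\Delta_0 d$;
\item the variance is at most $(L\lambda/\ell_1)^2\varsigma^2/p$.
\end{itemize}

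The parameters are chosen as follows. Take $\lambda:=\ell_1\eta/L$, so that coordinates of $\nabla f_0$ exceeding $1$ in $\bar f_d$-units become coordinates exceeding $\eta$, since $L\lambda/\ell_1=\eta$. Next take $d:=\lfloor \ell_1\Delta/(L\lambda^2\Delta_0)\rfloor = \lfloor L\Delta/(\ell_1\Delta_0\eta^2)\rfloor$, which gives the $\Delta$ gap and $d\le CL\Delta/\eta^2$. The requirement $\eta\le c\sqrt{L\Delta}$ ensures $d\ge1$, and in fact $d\ge 2$ or larger. Finally take $p:=\min\{1,\varsigma^2\eta^2/\sigma^2\}$, which equals $\varsigma^2\eta^2/\sigma^2$ when $\eta\le c\sigma$, so the variance is at most $\sigma^2$.

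For the bound $\norm{\nabla f_0}\le C\sqrt{L\Delta}$, I would use $\norm{\nabla\bar f_d}\le\sqrt d\,\gamma_\infty$. This gives $\norm{\nabla f_0}\le \eta\sqrt d\gamma_\infty\le \gamma_\infty\sqrt{L\Delta/(\ell_1\Delta_0)}$.

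For the progress part, note that zero-respecting behaviour and the zero-chain property are invariant under the coordinatewise scaling $\bx\mapsto\bx/\lambda$. So the scaled queries $\by_t=\bx_t/\lambda$ are queries of a zero-respecting algorithm for $\bar F_{d,p}$. Applying \Cref{lem:zero-chain-progress} with failure probability $1/2$, for $T\le (d-\log 2)/(2p)$ every query has $\prog_0(\by_t)<d$, and hence all supports lie in $[d-1]$.

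The main obstacle is the output point. I would argue that the support of $\nabla F_0(\bx_t;\xi_t)$ lies in $[\prog_{1/4}(\by_t)+1]$, which is contained in $[d]$, so that is not enough on its own. Instead, I would use the refined form of the argument in \citet{arjevani2023lower}: the same event also guarantees that the union of the supports of the returned gradients avoids coordinate $d$. Indeed, the proof of \Cref{lem:zero-chain-progress} tracks the maximal progress among the gradients returned and bounds it by $d-1$ over that horizon. I would restate it in that form and then get $\prog_0(\by^{\mathrm{out}})<d$.

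Given that, set $j:=\prog_1(\by^{\mathrm{out}})+1\le d$. By property 4 of \Cref{lem:hard-zero-chain}, $|[\nabla\bar f_d(\by^{\mathrm{out}})]_j|>1$. By the definition of $\prog_1$, $|y^{\mathrm{out}}_j|\le1$. Unscaling gives $|[\nabla f_0(\bx^{\mathrm{out}})]_j|\ge\eta$ and $|x^{\mathrm{out}}_j|\le\lambda=\ell_1\eta/L$.

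It remains to check the query budget. We have $(d-\log2)/(2p)\gtrsim d\sigma^2/\eta^2\gtrsim L\Delta\sigma^2/\eta^4$, using $d\ge2$ to absorb the $\log 2$. This matches $T\le cL\Delta\sigma^2/\eta^4$ for a small enough $c$, and the statement holds with probability at least $1/2$.
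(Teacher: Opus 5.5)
Your proposal is correct and matches the paper's proof in all essentials. It uses the same rescaling $\lambda=\ell_1\eta/L$, $d\asymp L\Delta/\eta^2$, $p=\varsigma^2\eta^2/\sigma^2$, and the same choice $j=\prog_1(\bx^{\mathrm{out}}/\lambda)+1$; dropping the factor $2$ in $d$ and adding the $\min\{1,\cdot\}$ in $p$ are immaterial. The one difference is at the output point. Your claim that the proof of \Cref{lem:zero-chain-progress} bounds the progress of all returned gradients is valid, but the paper avoids reopening that proof: it appends $\bx^{\mathrm{out}}/\lambda$ as a $(T+1)$-th query, which keeps the algorithm zero-respecting precisely because of the output-support condition, and then applies the lemma as a black box with budget $T+1\le(d-\log 2)/(2p)$.
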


\begin{proof}
Let $\Delta_0,\ell_1,\gamma_\infty,\varsigma$ be as in
\Cref{lem:hard-zero-chain,lem:stochastic-zero-chain}, and set
\[
    \lambda:=\frac{\ell_1\eta}{L},
    \qquad
    d:=\left\lfloor
        \frac{L\Delta}{2\Delta_0\ell_1\eta^2}
    \right\rfloor,
    \qquad
    p:=\frac{\varsigma^2\eta^2}{\sigma^2}\,.
\]
For sufficiently small $c$ and large $C$,
$2\le d\le C L\Delta/\eta^2$ and $p\le1$. Define, using the
functions from \Cref{lem:hard-zero-chain,lem:stochastic-zero-chain},
\[
    f_0(\bx)
    :=
    \frac{L\lambda^2}{\ell_1}
    \bar f_d(\bx/\lambda),
    \qquad
    F_0(\bx;\xi)
    :=
    \frac{L\lambda^2}{\ell_1}
    \bar F_{d,p}(\bx/\lambda;\xi)\,.
\]
By \Cref{lem:hard-zero-chain} and $L\lambda/\ell_1=\eta$, $f_0$ is
$L$-smooth,
$f_0(\mathbf 0)-\inf_{\bx\in\R^d} f_0(\bx)
\le (L\lambda^2/\ell_1)\Delta_0d\le\Delta$, and
$\norm{\nabla f_0(\bx)}
\le\eta\gamma_\infty\sqrt d\le C\sqrt{L\Delta}$.
By \Cref{lem:stochastic-zero-chain}, $F_0$ is unbiased and
$\E_\xi\norm{\nabla F_0(\bx;\xi)-\nabla f_0(\bx)}^2
\le\eta^2\varsigma^2/p=\sigma^2$.
Appending $\bx^{\mathrm{out}}/\lambda$ as query $T+1$, and decreasing
$c$ if necessary,
\[
    T+1
    \le
    c\frac{L\Delta\sigma^2}{\eta^4}+1
    \le
    \frac{d-\log 2}{2p}\,.
\]
Hence \Cref{lem:zero-chain-progress} gives
$\prog_0(\bx^{\mathrm{out}}/\lambda)<d$ with probability at least $1/2$.
Now for this event, define 
$j:=\prog_1(\bx^{\mathrm{out}}/\lambda)+1$. Since
$\prog_1(\bx^{\mathrm{out}}/\lambda)<d$, we have $j\in[d]$, and by the
definition of $\prog_1$,
\[
    |x_j^{\mathrm{out}}|
    \le
    \lambda
    =
    \frac{\ell_1\eta}{L}
    \le
    C\frac{\eta}{L}\,.
\]
Moreover, by \Cref{lem:hard-zero-chain} and
$L\lambda/\ell_1=\eta$,
\[
    |[\nabla f_0(\bx^{\mathrm{out}})]_j|
    =
    \eta
    |[\nabla\bar f_d(\bx^{\mathrm{out}}/\lambda)]_j|
    >
    \eta\,.
\]
\end{proof}

To convert the hard coordinate in \Cref{lem:scaled-zero-chain} into
a Frank-Wolfe lower bound, we add a quadratic radial term to the hard
function. \Cref{lem:scaled-zero-chain} yields the desired gap near the origin,
while the radial term ensures a large gap away from the origin.

\begin{lemma}
\label{lem:hard-fw-gap}
Let $D>0$, let $f_0:\R^d\to\R$ be differentiable with
$\norm{\nabla f_0(\bx)}\le M$ for every $\bx\in\R^d$, and let
$0<\eta\le M$. Define
$f(\bx):=f_0(\bx)+(4M/D)\norm{\bx}^2$ and
$\cX:=\Ball(\mathbf 0,D/2)$. If $\bx\in\cX$ has a coordinate $j$ such that
$|x_j|\le D\eta/(16M)$ and
$|[\nabla f_0(\bx)]_j|\ge\eta$, then
$\FW^{f,\cX}(\bx)\ge D\eta/8$.
\end{lemma}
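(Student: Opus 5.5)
The plan is to compute the Frank--Wolfe gap directly. Here $\FW^{f,\cX}(\bx)$ is the $\delta=0$ quantity $\max_{\bu\in\cX}\inner{\nabla f(\bx)}{\bx-\bu}$, with
\[
\bg:=\nabla f(\bx)=\nabla f_0(\bx)+\frac{8M}{D}\bx .
\]
Since $\cX=\Ball(\mathbf 0,D/2)$, the maximum has the closed form $\inner{\bg}{\bx}+\frac D2\norm{\bg}$. Rather than use this formula in full, I will lower bound the gap by testing specific points $\bu\in\cX$. I will split into two cases according to whether $\bx$ is far from or close to the origin. This matches the remark before the lemma: the radial term handles points far from the origin, and the hard coordinate handles points near it.

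\emph{Case $\norm{\bx}\ge D/4$.} Take $\bu=\mathbf 0\in\cX$, which drops the $\frac D2\norm{\bg}$ term. By Cauchy--Schwarz and $\norm{\nabla f_0}\le M$,
\[
\FW^{f,\cX}(\bx)\ge\inner{\bg}{\bx}\ge -M\norm{\bx}+\frac{8M}{D}\norm{\bx}^2=M\norm{\bx}\Bigl(\frac{8\norm{\bx}}{D}-1\Bigr).
\]
The right-hand side is increasing in $\norm{\bx}$ on this range. At $\norm{\bx}=D/4$ it equals $DM/4$, so the gap is at least $DM/4\ge D\eta/8$, using $\eta\le M$.

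\emph{Case $\norm{\bx}<D/4$.} Here I use the hard coordinate $j$. Take $\bu=\bx+s\,\mathbf e_j$ with $|s|=D/4$ and the sign of $s$ opposite to that of $g_j$. Then $\norm{\bu}\le\norm{\bx}+D/4<D/2$, so $\bu\in\cX$, and $\inner{\bg}{\bx-\bu}=\frac D4|g_j|$. It remains to bound $|g_j|$ from below. The radial contribution to coordinate $j$ satisfies
\[
\Bigl|\tfrac{8M}{D}x_j\Bigr|\le\tfrac{8M}{D}\cdot\tfrac{D\eta}{16M}=\tfrac{\eta}{2},
\]
so $|g_j|\ge|[\nabla f_0(\bx)]_j|-\eta/2\ge\eta/2$. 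Hence the gap is at least $D\eta/8$.

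The main point to check is the choice of constants: the threshold $D/4$ must work in both cases simultaneously. In the far case it must make the radial term dominate $M\norm{\bx}$. In the near case it must leave room for a step of length $D/4$ inside the ball, while $|x_j|\le D\eta/(16M)$ keeps the radial perturbation of $g_j$ at most $\eta/2$. With the stated constants both requirements hold, and the proof reduces to the two displayed estimates.
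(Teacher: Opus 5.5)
Your proof is correct and follows essentially the same argument as the paper: the same case split at $\norm{\bx}=D/4$, with the radial term dominating far from the origin, and near the origin the bound $\frac{8M}{D}|x_j|\le\eta/2$ giving $|[\nabla f(\bx)]_j|\ge\eta/2$. The only differences are the test points. You use $\bu=\mathbf 0$ and a coordinate step $\bx+s\mathbf e_j$, where the paper uses $-D\bx/(2\norm{\bx})$ and $-\frac D2\nabla f(\bx)/\norm{\nabla f(\bx)}$, and both choices give the required constant $D\eta/8$.
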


\begin{proof}
If $\norm{\bx}\le D/4$, then by choosing $-\frac D2\frac{\nabla f(\bx)}{\norm{\nabla f(\bx)}}\in\cX$ we have
\begin{align*}
\FW^{f,\cX}(\bx)
&\ge
\left\langle
\nabla f(\bx),
\bx+\frac D2\frac{\nabla f(\bx)}{\norm{\nabla f(\bx)}}
\right\rangle\\
&=
\left\langle\nabla f(\bx),\bx\right\rangle
+
\frac D2\norm{\nabla f(\bx)}\\
&\ge
-\norm{\nabla f(\bx)}\norm{\bx}
+
\frac D2\norm{\nabla f(\bx)}
\\&\ge
\frac D4\norm{\nabla f(\bx)}\\
&\ge
\frac D4\abs{[\nabla f(\bx)]_j}\\
&\ge
\frac D4
\left(
\abs{[\nabla f_0(\bx)]_j}
-
\frac{8M}{D}|x_j|
\right)\\
&\ge
\frac D4
\left(
\eta-\frac{8M}{D}\frac{D\eta}{16M}
\right)
=
\frac{D\eta}{8}\,.
\end{align*}
Otherwise, choose $-D\bx/(2\norm{\bx})\in\cX$ to get
\begin{align*}
\FW^{f,\cX}(\bx)
&\ge
\left\langle\nabla f(\bx),\bx+D\frac{\bx}{2\norm{\bx}}\right\rangle
\\
&\ge
\left(\frac D2+\norm{\bx}\right)
\left\langle
\nabla f(\bx),\frac{\bx}{\norm{\bx}}
\right\rangle\\
&=
\left(\frac D2+\norm{\bx}\right)
\left(
\left\langle
\nabla f_0(\bx),\frac{\bx}{\norm{\bx}}
\right\rangle
+\frac{8M}{D}\norm{\bx}
\right)\\
&\ge
\left(\frac D2+\norm{\bx}\right)
\left(\frac{8M}{D}\norm{\bx}-M\right)\\
&\ge
\frac{DM}{2}\\
&\ge
\frac{D\eta}{8}\,.
\end{align*}
\end{proof}

\begin{theorem}
\label{thm:smooth-fw-lower}
There are universal constants $c,C,C_d>0$ such that the following holds.
Let $H,\Delta,G,D,\eps>0$ satisfy
\[
    \Delta\le cHD^2,
    \qquad
    \eps\le cD\sqrt{H\Delta},
    \qquad
    G\ge C\sqrt{H\Delta}\,.
\]
Then there exists a dimension $d\le C_d H\Delta D^2/\eps^2$ for which
there are differentiable stochastic components $F(\cdot;\xi)$ whose mean $f:\R^d\to\R$ is $H$-smooth and, for $\cX:=\Ball(\mathbf 0,D/2)$, we have, for every $\bx\in\cX$,
\[
    f(\mathbf 0)-\inf_{\bx\in\cX}f(\bx)\le\Delta,
    \qquad
    \E_\xi[\nabla F(\bx;\xi)]=\nabla f(\bx),
    \qquad
    \E_\xi\norm{\nabla F(\bx;\xi)}^2\le G^2
    \,.
\]
Moreover, any randomized zero-respecting first-order algorithm
making queries $\bx_1,\ldots,\bx_T\in\cX$ and returning
$\bx^{\mathrm{out}}\in\cX$ satisfying
\[
    \operatorname{supp}(\bx^{\mathrm{out}})
    \subseteq
    \bigcup_{t\le T}\operatorname{supp}
    \bigl(\nabla F(\bx_t;\xi_t)\bigr),
    \qquad
    \E[\FW^{f,\cX}(\bx^{\mathrm{out}})]\le\eps,
\]
must have
\[
    T
    =
    \Omega\!\left(
        \frac{\Delta HG^2D^4}{\eps^4}
    \right)\,.
\]
\end{theorem}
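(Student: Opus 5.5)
The plan is to combine \Cref{lem:scaled-zero-chain} with \Cref{lem:hard-fw-gap}. Set $\eta:=8\eps'/D$ with $\eps'$ a constant multiple of $\eps$, so that a Frank-Wolfe gap of at least $D\eta/8$ exceeds $2\eps$. We then apply \Cref{lem:scaled-zero-chain} with smoothness $L:=H/2$, gap $\Delta/2$, and noise $\sigma:=G/2$. This produces a stochastic $F_0$ with mean $f_0$, and the hard instance is
\[
F(\bx;\xi):=F_0(\bx;\xi)+\tfrac{4M}{D}\norm{\bx}^2,\qquad f=f_0+\tfrac{4M}{D}\norm{\bx}^2,
\]
where $M:=C\sqrt{L\Delta}$ bounds $\norm{\nabla f_0}$. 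The quadratic term is deterministic and has support equal to that of $\bx$. Hence the support of $\nabla F(\bx;\xi)$ is contained in $\operatorname{supp}(\nabla F_0(\bx;\xi))\cup\operatorname{supp}(\bx)$. Consequently, a zero-respecting algorithm on $F$ induces a zero-respecting algorithm on $F_0$: it can recompute the quadratic gradient itself without enlarging any supports. The support condition on $\bx^{\mathrm{out}}$ transfers in the same way.

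Next we verify the problem parameters. The Hessian of the radial term is $8M/D$, so $f$ is smooth with constant $L+8C\sqrt{L\Delta}/D$, which is at most $H$ once $\Delta\le cHD^2$. For the initial gap, $f(\mathbf 0)-\inf_{\cX}f\le \Delta/2+0$, since the quadratic is nonnegative and vanishes at $\mathbf 0$. For the second moment, $\E\norm{\nabla F}^2\le 2(\norm{\nabla f}^2+\sigma^2)$, where $\norm{\nabla f}\le M+8M\cdot(D/2)/D=5M$ on $\cX$. This is at most $G^2$ when $G\ge C\sqrt{H\Delta}$. The hypotheses of \Cref{lem:scaled-zero-chain} require $\eta\le c\min\{\sigma,\sqrt{L\Delta}\}$, which follows from $\eps\le cD\sqrt{H\Delta}$ together with $G\gtrsim\sqrt{H\Delta}$. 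The dimension satisfies $d\le CL\Delta/\eta^2\lesssim H\Delta D^2/\eps^2$.

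To extract the lower bound, suppose $T\le cL\Delta\sigma^2/\eta^4\asymp \Delta HG^2D^4/\eps^4$. By \Cref{lem:scaled-zero-chain}, with probability at least $1/2$ there is a coordinate $j$ with $|x^{\mathrm{out}}_j|\le C\eta/L$ and $|[\nabla f_0(\bx^{\mathrm{out}})]_j|\ge\eta$. To invoke \Cref{lem:hard-fw-gap} we need $C\eta/L\le D\eta/(16M)$, that is, $16CM\le LD$, i.e. $\sqrt{L\Delta}\lesssim LD$, which is equivalent to $\Delta\lesssim HD^2$ and is again guaranteed by the first assumption. We also need $\eta\le M$, which follows from the assumption on $\eps$. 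On this event \Cref{lem:hard-fw-gap} gives $\FW^{f,\cX}(\bx^{\mathrm{out}})\ge D\eta/8>2\eps$. Since the gap is always nonnegative, $\E[\FW]>\eps$, a contradiction. Hence $T=\Omega(\Delta HG^2D^4/\eps^4)$.

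The main obstacle is the constant bookkeeping. A single $c$ must make three things hold simultaneously: the added quadratic preserves $H$-smoothness, the localization scale $C\eta/L$ from \Cref{lem:scaled-zero-chain} fits under $D\eta/(16M)$, and the noise assumption holds. All three reduce to $\Delta\le cHD^2$ and $\eps\le cD\sqrt{H\Delta}$, so they are compatible after shrinking constants. A minor additional check is that the queries are restricted to $\cX$. This is harmless because \Cref{lem:scaled-zero-chain} holds for arbitrary query points.
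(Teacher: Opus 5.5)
Your proposal is correct and matches the paper's proof. It uses the same radial term $\frac{4M}{D}\norm{\bx}^2$ added to the scaled zero-chain instance with $L=H/2$, $\sigma=G/2$ and $\eta\asymp\eps/D$ (the paper takes $\eta=32\eps/D$), the same inductive support argument to simulate the $F_0$ oracle, and the same combination of \Cref{lem:scaled-zero-chain} and \Cref{lem:hard-fw-gap} on the probability-$1/2$ event. Your deviations are harmless: applying the lemma with gap $\Delta/2$ instead of $\Delta$, and using the cruder bound $\E\norm{\nabla F}^2\le 2(\norm{\nabla f}^2+\sigma^2)$ in place of the exact bias--variance split.
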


\begin{proof}
Set $\eta:=32\eps/D$ and apply \Cref{lem:scaled-zero-chain} with
$L=H/2$ and $\sigma=G/2$. Let $C_{\rm zc}$ denote its universal
constant $C$, set $M:=C_{\rm zc}\sqrt{H\Delta}$, and define
\[
    F(\bx;\xi)
    :=
    F_0(\bx;\xi)+\frac{4M}{D}\norm{\bx}^2\,.
\]
For sufficiently small $c$ and large $C,C_d$, the assumptions of
\Cref{lem:scaled-zero-chain} hold, and its dimension satisfies the claimed bound.
Thus $f$ is $H$-smooth and we have that
\begin{align*}
    f(\mathbf 0)-\inf_{\bx\in\cX}f(\bx)
    &\le f_0(\mathbf 0)-\inf f_0
    \le\Delta
\end{align*}
and \begin{align*}
    \E_\xi\norm{\nabla F(\bx;\xi)}^2
    &=
    \norm{\nabla f(\bx)}^2
    +
    \E_\xi\norm{
        \nabla F_0(\bx;\xi)-\nabla f_0(\bx)
    }^2
    \le G^2\,.
\end{align*}
Given a zero-respecting algorithm for $F$, subtracting
$8M\bx/D$ from each oracle response simulates the $F_0$ oracle. Inductively,
for every $t$,
\[
    \operatorname{supp}(\bx_t)
    \subseteq
    \bigcup_{s<t}\operatorname{supp}
    \bigl(\nabla F_0(\bx_s;\xi_s)\bigr),
    \qquad
    \operatorname{supp}\bigl(\nabla F(\bx_t;\xi_t)\bigr)
    \subseteq
    \bigcup_{s\le t}\operatorname{supp}
    \bigl(\nabla F_0(\bx_s;\xi_s)\bigr)\,.
\]
Thus the simulated algorithm is zero-respecting with $\operatorname{supp}(\bx^{\mathrm{out}})
\subseteq\bigcup_{t\le T}\operatorname{supp}
(\nabla F_0(\bx_t;\xi_t))$. Hence, for a sufficiently small universal constant $c_0>0$, if
\[
    T
    \le
    c_0\frac{\Delta HG^2D^4}{\eps^4},
\]
then \Cref{lem:scaled-zero-chain,lem:hard-fw-gap} give
\[
    \E[\FW^{f,\cX}(\bx^{\mathrm{out}})]
    \ge
    \frac12\cdot\frac{D\eta}{8}
    =2\eps\,.
\]
\end{proof}

\begin{lemma}
\label{lem:smooth-goldstein-comparison}
Let $f$ be $H$-smooth and let $\cX$ be compact with diameter $D$. Then,
for every $\bx\in\cX$ and $\delta\ge0$,
\[
    \FW^{f,\cX}(\bx)-HD\delta
    \le
    \FW^{f,\cX}_\delta(\bx)
    \le
    D\Res^{f,\cX}_\delta(\bx)\,.
\]
\end{lemma}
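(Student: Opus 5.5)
The plan is to prove the two inequalities separately, since they use different ingredients.

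\emph{Right inequality.} It follows directly from \Cref{lem:prelim-stability}. For every $\bg\in\R^d$, the second bound in \eqref{eq:gap-stability} gives
\[
\max_{\bu\in\cX}\inner{\bg}{\bx-\bu}\le D\,\dist\bigl(-\bg,\mathcal N_\cX(\bx)\bigr).
\]
Minimizing both sides over $\bg\in\partial_\delta f(\bx)$ yields $\FW^{f,\cX}_\delta(\bx)\le D\Res^{f,\cX}_\delta(\bx)$. The minima are attained because $\partial_\delta f(\bx)$ is compact and both sides are continuous in $\bg$. Smoothness is not used in this step.

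\emph{Left inequality.} Let $\bg^\star\in\partial_\delta f(\bx)$ attain the minimum defining $\FW^{f,\cX}_\delta(\bx)$. Since $f$ is $H$-smooth, hence $C^1$, the Clarke subdifferential at each $\by$ is $\{\nabla f(\by)\}$. Therefore
\[
\partial_\delta f(\bx)=\conv\{\nabla f(\by):\by\in\Ball(\bx,\delta)\}.
\]
Each generator satisfies $\norm{\nabla f(\by)-\nabla f(\bx)}\le H\delta$. The set $\Ball(\nabla f(\bx),H\delta)$ is convex, so it contains the whole convex hull, and in particular $\norm{\bg^\star-\nabla f(\bx)}\le H\delta$. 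Applying the first bound in \eqref{eq:gap-stability} with $\bg=\nabla f(\bx)$ and $\bh=\bg^\star$ gives
\[
\FW^{f,\cX}(\bx)\le \max_{\bu\in\cX}\inner{\bg^\star}{\bx-\bu}+D\norm{\nabla f(\bx)-\bg^\star}\le \FW^{f,\cX}_\delta(\bx)+HD\delta.
\]
Rearranging gives the left inequality.

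There is no real obstacle here. The only point needing care is the identification of the Goldstein subdifferential of a smooth function with the convex hull of nearby gradients, together with the containment of that hull in the $H\delta$-ball around $\nabla f(\bx)$; convexity of the ball settles the latter. If attainment of the minimum is in doubt, one can instead take an arbitrary $\bg\in\partial_\delta f(\bx)$ and then pass to the infimum.
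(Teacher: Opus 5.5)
Your proof is correct and follows essentially the same route as the paper. Both arguments bound $\norm{\bg-\nabla f(\bx)}\le H\delta$ for every $\bg\in\partial_\delta f(\bx)$ by writing $\bg$ as a convex combination of gradients at points of $\Ball(\bx,\delta)$, and then invoke both parts of \eqref{eq:gap-stability} from \Cref{lem:prelim-stability}; the only difference is that the paper carries an arbitrary $\bg$ inside the minimum rather than fixing a minimizer, which avoids the attainment question you flag (as your own fallback also does).
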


\begin{proof}
For every $\bg\in\partial_\delta f(\bx)$, writing
$\bg=\sum_i\alpha_i\nabla f(\by_i)$ with
$\by_i\in\Ball(\bx,\delta)$ gives
\[
    \norm{\bg-\nabla f(\bx)}
    \le
    \sum_i\alpha_i
    \norm{\nabla f(\by_i)-\nabla f(\bx)}
    \le H\delta\,.
\]
Then by \Cref{lem:prelim-stability},
\begin{align*}
\FW^{f,\cX}(\bx)-HD\delta
&\le
\min_{\bg\in\partial_\delta f(\bx)}
\left\{
\max_{\bu\in\cX}\inner{\bg}{\bx-\bu}
+
D\norm{\bg-\nabla f(\bx)}
-
HD\delta
\right\}\\
&\le
\min_{\bg\in\partial_\delta f(\bx)}
\max_{\bu\in\cX}\inner{\bg}{\bx-\bu}\\
&=
\FW^{f,\cX}_\delta(\bx)\\
&\le
D\min_{\bg\in\partial_\delta f(\bx)}
\dist(-\bg,N_{\cX}(\bx))\\
&=
D\Res^{f,\cX}_\delta(\bx)\,.
\end{align*}
\end{proof}

The smooth Frank-Wolfe lower bound immediately yields the corresponding
lower bound for the Goldstein Frank-Wolfe gap, as we formalize in the corollary below.

\begin{corollary}
\label{cor:goldstein-fw-lower}
There are universal constants $c,C,C_d>0$ such that the following holds.
Let $\Delta,G,D,\eps>0$ and $0<\delta<D$ satisfy
\[
    C\frac{\eps\delta}{D}\le\Delta\le c\frac{\eps D}{\delta},
    \qquad
    G^2\ge C\frac{\eps\Delta}{D\delta}\,.
\]
Then there exists a dimension $d\le C_d\Delta D/(\delta\eps)$ for which
there are differentiable stochastic components $F(\cdot;\xi)$
with mean $f$ such that, for $\cX:=\Ball(\mathbf 0,D/2)$,
$f$ is $\eps/(D\delta)$-smooth and the initial-suboptimality and
stochastic-gradient bounds in \Cref{thm:smooth-fw-lower} hold.
Every randomized zero-respecting first-order algorithm satisfying the
output-support condition of \Cref{thm:smooth-fw-lower} requires
\[
    \Omega\!\left(
        \frac{\Delta D^3G^2}{\delta\eps^3}
    \right)
\]
stochastic-gradient queries to return $\bx^{\mathrm{out}}\in\cX$ with
$\E[\FW^{f,\cX}_\delta(\bx^{\mathrm{out}})]\le\eps$.
\end{corollary}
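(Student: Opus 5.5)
The plan is to apply \Cref{thm:smooth-fw-lower} with the smoothness parameter set to $H:=\eps/(D\delta)$, and then pass from the smooth Frank-Wolfe gap to the Goldstein gap with \Cref{lem:smooth-goldstein-comparison}. We take the hard instance $f$ and components $F(\cdot;\xi)$ from \Cref{thm:smooth-fw-lower}, built with accuracy parameter $\eps':=2\eps$ (or any fixed constant multiple of $\eps$). For this $f$ and every $\bx\in\cX$, \Cref{lem:smooth-goldstein-comparison} gives
\[
\FW^{f,\cX}(\bx)\le\FW^{f,\cX}_\delta(\bx)+HD\delta=\FW^{f,\cX}_\delta(\bx)+\eps .
\]
Hence any algorithm whose output satisfies $\E[\FW^{f,\cX}_\delta(\bx^{\mathrm{out}})]\le\eps$ also satisfies $\E[\FW^{f,\cX}(\bx^{\mathrm{out}})]\le2\eps$. \Cref{thm:smooth-fw-lower} then forces
\[
T=\Omega\bigl(\Delta HG^2D^4/(2\eps)^4\bigr)=\Omega\bigl(\Delta D^3G^2/(\delta\eps^3)\bigr).
\]
The dimension bound follows the same way: $d\le C_dH\Delta D^2/(4\eps^2)\lesssim\Delta D/(\delta\eps)$. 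The remaining conclusions of the corollary, namely smoothness, initial suboptimality, and the gradient-moment bound, carry over verbatim from \Cref{thm:smooth-fw-lower}.

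The one step needing care is checking the hypotheses of \Cref{thm:smooth-fw-lower} with $H=\eps/(D\delta)$ and accuracy $2\eps$.
\begin{itemize}
\item The condition $\Delta\le cHD^2$ becomes $\Delta\le c\,\eps D/\delta$, which is the assumed upper bound on $\Delta$.
\item The condition $2\eps\le cD\sqrt{H\Delta}$ becomes $4\eps^2\le c^2D\eps\Delta/\delta$, that is $\Delta\ge(4/c^2)\,\eps\delta/D$. This is the assumed lower bound on $\Delta$, provided the corollary's constant $C$ is chosen as at least $4/c^2$.
\item The condition $G\ge C\sqrt{H\Delta}$ becomes $G^2\ge C^2\eps\Delta/(D\delta)$, which is the assumed lower bound on $G^2$, again after enlarging the constant.
\end{itemize}
The main (mild) obstacle is the bookkeeping of constants: the corollary's $c$ and $C$ must be chosen as functions of the constants of \Cref{thm:smooth-fw-lower} so that all three conditions hold together, and the factor $2$ in the accuracy must be absorbed. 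Neither issue is a real difficulty.

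Finally, the algorithm class is unchanged. It consists of the same zero-respecting algorithms with the same output-support condition, and we run them on the same oracle. So the reduction above is a direct pointwise comparison of stationarity measures, and no simulation argument is needed.
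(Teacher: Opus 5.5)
Your proposal is correct and is essentially the paper's proof: apply \Cref{thm:smooth-fw-lower} with $H=\eps/(D\delta)$ and Frank-Wolfe target $2\eps$, then use \Cref{lem:smooth-goldstein-comparison} to get $\FW^{f,\cX}(\bx)\le\FW^{f,\cX}_\delta(\bx)+\eps$. Your explicit check of the three hypotheses, and of how the dimension and query count scale, is the constant bookkeeping that the paper leaves implicit.
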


\begin{proof}
Apply \Cref{thm:smooth-fw-lower} with
$H=\eps/(D\delta)$ and ordinary Frank-Wolfe target $2\eps$. By
\Cref{lem:smooth-goldstein-comparison},
\[
    \FW^{f,\cX}_\delta(\bx)
    \ge
    \FW^{f,\cX}(\bx)-\eps\,.
\]
\end{proof}

Finally, the Goldstein Frank-Wolfe lower bound in turn yields the corresponding
lower bound for the Goldstein residual.

\begin{corollary}
\label{cor:goldstein-res-lower}
There are universal constants $c,C,C_d>0$ such that the following holds.
Let $\Delta,G,D,\eps>0$ and $0<\delta<D$ satisfy
\[
    C\eps\delta\le\Delta\le c\frac{\eps D^2}{\delta},
    \qquad
    G^2\ge C\frac{\eps\Delta}{\delta}\,.
\]
Then there exists a dimension $d\le C_d\Delta/(\delta\eps)$ for which
there are differentiable stochastic components $F(\cdot;\xi)$
with mean $f$ such that, for $\cX:=\Ball(\mathbf 0,D/2)$,
$f$ is $\eps/\delta$-smooth and the initial-suboptimality and
stochastic-gradient bounds in \Cref{thm:smooth-fw-lower} hold.
Every randomized zero-respecting first-order algorithm satisfying the
output-support condition of \Cref{thm:smooth-fw-lower} requires
\[
    \Omega\!\left(
        \frac{\Delta G^2}{\delta\eps^3}
    \right)
\]
stochastic-gradient queries to return $\bx^{\mathrm{out}}\in\cX$ with
$\E[\Res^{f,\cX}_\delta(\bx^{\mathrm{out}})]\le\eps$.
\end{corollary}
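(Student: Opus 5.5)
The plan is to mirror the proof of \Cref{cor:goldstein-fw-lower}, using the right inequality of \Cref{lem:smooth-goldstein-comparison}, $\FW^{f,\cX}_\delta(\bx)\le D\Res^{f,\cX}_\delta(\bx)$. Two routes are possible. The first is to invoke \Cref{cor:goldstein-fw-lower} with Frank-Wolfe target $\eps':=D\eps$. The second is to go directly to \Cref{thm:smooth-fw-lower} with $H=\eps/\delta$ and ordinary Frank-Wolfe target $2D\eps$. Both give the same rescaling, so I will take the direct route, since it exposes the parameter conditions most cleanly.

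\textbf{Step 1 (construction and parameter check).} Apply \Cref{thm:smooth-fw-lower} with $H:=\eps/\delta$ and target gap $2D\eps$, on $\cX=\Ball(\mathbf 0,D/2)$. Its three hypotheses become the following.
\begin{itemize}
\item $\Delta\le cHD^2=c\eps D^2/\delta$, which is the assumed upper bound on $\Delta$.
\item $2D\eps\le cD\sqrt{H\Delta}$, i.e.\ $4\eps^2\le c^2\eps\Delta/\delta$, i.e.\ $\Delta\ge C\eps\delta$, which is the assumed lower bound.
\item $G\ge C\sqrt{H\Delta}$, i.e.\ $G^2\ge C\eps\Delta/\delta$, which is also assumed.
\end{itemize}
The dimension bound is $d\le C_dH\Delta D^2/(2D\eps)^2=O(\Delta/(\delta\eps))$. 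The resulting $f$ is $\eps/\delta$-smooth, and the suboptimality and second-moment bounds carry over verbatim.

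\textbf{Step 2 (transfer of the stationarity measure).} By \Cref{lem:smooth-goldstein-comparison}, for every $\bx\in\cX$,
\[
D\Res^{f,\cX}_\delta(\bx)\ge\FW^{f,\cX}_\delta(\bx)\ge\FW^{f,\cX}(\bx)-HD\delta=\FW^{f,\cX}(\bx)-D\eps .
\]
So if $\E[\Res^{f,\cX}_\delta(\bx^{\mathrm{out}})]\le\eps$, then $\E[\FW^{f,\cX}(\bx^{\mathrm{out}})]\le 2D\eps$. \Cref{thm:smooth-fw-lower} then forces
\[
T=\Omega\!\left(\frac{\Delta HG^2D^4}{(2D\eps)^4}\right)=\Omega\!\left(\frac{\Delta G^2}{\delta\eps^3}\right),
\]
using $H=\eps/\delta$. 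The zero-respecting and output-support conditions are inherited unchanged.

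\textbf{Main obstacle.} The only delicate point is bookkeeping of the universal constants. The lower bound on $\Delta$ must absorb the factor $4/c^2$ coming from the target $2D\eps$, and the factor $2^4$ must be absorbed into the $\Omega$. Both are handled by choosing $C$ large and $c$ small, exactly as in \Cref{cor:goldstein-fw-lower}. No new construction is needed.
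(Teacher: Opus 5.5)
Your proof is correct and is essentially the paper's argument. The paper applies \Cref{cor:goldstein-fw-lower} with target $D\eps$ and then uses $\FW^{f,\cX}_\delta\le D\Res^{f,\cX}_\delta$; since that corollary is itself \Cref{thm:smooth-fw-lower} with $H=\eps/(D\delta)$ and target $2\eps$, the substitution $\eps\mapsto D\eps$ gives exactly your choice $H=\eps/\delta$ with target $2D\eps$, so your direct route just inlines the intermediate corollary, and your parameter and constant bookkeeping checks out.
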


\begin{proof}
Apply \Cref{cor:goldstein-fw-lower} with target $D\eps$. By
\Cref{lem:smooth-goldstein-comparison},
$\FW^{f,\cX}_\delta(\bx)\le D\Res^{f,\cX}_\delta(\bx)$.
\end{proof}
\section{Discussion}
In this paper, we provided the first dimension-free algorithms for first-order constrained optimization of
arbitrary stochastic Lipschitz functions, without smoothness or convexity
assumptions, and showed that their rates are optimal for (possibly-)randomized zero-respecting algorithms. Our approach also extends to
the zeroth-order setting in \Cref{sec:zero-order} and closes the upper-bound gap
left open by \citet{liu2024}. As discussed in \Cref{sec:lower-bounds}, the recent joint lower bound of
\citet{zhang2026jointlowerboundszerothorder} shows the optimality of our
zeroth-order rate for the Goldstein residual already in the unconstrained case.

We leave several directions open for future work. 
A natural
direction is to further assess the overall computational complexity of this setting (see \Cref{rem: overall cmplx}), and to
extend the framework in this work more generally to composite optimization problems as in \citet{chen2025composite}. Finally,
similarly to \citet{liu2024}, our guarantees control the expectation of a stationarity
measure (in our case, a stronger one, as discussed in Remark~\ref{rem: stronger stat}), whereas obtaining analogous high-probability
guarantees should be possible using standard amplification techniques, which we do not pursue here. We also leave open formally extending the lower
bounds in \Cref{app:lower-bounds} from randomized zero-respecting algorithms to arbitrary randomized algorithms, which seems possible using known reduction techniques \citep{carmon2017lower}.

Lastly, we note that there exist practical drawbacks to our algorithms. Most notably, as is
common in nonsmooth nonconvex optimization, our guarantees require several
parameters to be tuned at different scales, making the methods less attractive from a practical standpoint. It would therefore be interesting to find simpler
algorithms that retain the same guarantees. One possible route is through a
reduction of nonsmooth nonconvex optimization to static regret of the type
proposed by \citet{chen2024open}, which could potentially lead to a substantially
simpler algorithmic framework.

\subsection*{Statement on use of AI}
The results in this paper were developed through a highly interactive process
involving AI models, specifically, GPT Sol 5.6 and GPT Astra. The authors began with a roadmap of the results, and formalized and generalized them with the help of AI.

Every mathematical argument appearing in the paper was independently re-derived, rewritten and simplified by the authors. The authors
take full responsibility for the correctness and presentation of all material
in this manuscript.

\newpage
\bibliographystyle{plainnat}
\bibliography{references}

\end{document}